\newtheorem{thm}{Theorem}[section]
\newtheorem{cor}[thm]{Corollary}
\newtheorem{lem}[thm]{Lemma}
\newtheorem{prop}[thm]{Proposition}
\theoremstyle{definition}
\newtheorem{defn}[thm]{Definition}
\newtheorem{rem}[thm]{Remark}
\numberwithin{equation}{section}
\newcommand{\N}{\mathbb{N}}
\newcommand{\R}{\mathbb{R}}
\newcommand{\C}{\mathbb{C}}
\newcommand{\Z}{\mathbb{Z}}
\newcommand{\trans}[1]{{}^t\kern-.2em{#1}}
\newcommand{\ytrans}[1]{{}^t\kern-.11em{#1}}
\newcommand{\Trans}[1]{{}^T\kern-.2em{#1}}
\newcommand{\lsup}[2]{{}^{#1}\kern-.1em{#2}}
\newcommand{\Ker}{\operatorname{Ker}}
\newcommand{\M}{\mathbf{\M}}
\renewcommand{\tilde}[1]{\widetilde{#1}}
\renewcommand{\hat}[1]{\widehat{#1}}
\DeclareFixedFont{\bgn}{OT1}{cmr}{m}{n}{20.74}
\DeclareFixedFont{\bgi}{OT1}{cmr}{m}{it}{20.74}
\newcommand{\bigzerou}{\smash{\lower1.7ex\hbox{\bgi O}}}
\def\eqnarray{%
   \stepcounter{equation}%
   \def\@currentlabel{\p@equation\theequation}%
   \global\@eqnswtrue
   \m@th
   \global\@eqcnt\z@
   \tabskip\@centering
   \let\\\@eqncr
   $$\everycr{}\halign to\displaywidth\bgroup
       \hskip\@centering$\displaystyle\tabskip\z@skip{##}$\@eqnsel
      &\global\@eqcnt\@ne \hfil$\displaystyle{{}##{}}$\hfil
      &\global\@eqcnt\tw@ $\displaystyle{##}$\hfil\tabskip\@centering
      &\global\@eqcnt\thr@@ \hb@xt@\z@\bgroup\hss##\egroup
        \tabskip\z@skip
      \cr}
\def\varin{\mathrel{\mathpalette\@varin\relax}}
\def\@varin#1{%
   \hbox{\setbox\z@\hbox{\m@th$#1\cup$}%
       \def\reserved@a{bold}%
       \dimen@\ifx\reserved@a\math@version .3\else .2\fi\p@
       \kern.5\wd\z@\kern-\dimen@
       \vrule\@width2\dimen@\@height1.08\ht\z@\@depth\z@
       \kern-\dimen@\kern-.5\wd\z@
       \box\z@}}
         \def\ampm{\,pm}%
         \def\ampm{\,am}%
    \xdef\daytime{%
         \ifnum\count2<10 0\fi \the\count2:%
         \ifnum\count1<10 0\fi \the\count1
         \ampm
    }%
    \xdef\Daytime{
         \the\count2:
         \ifnum\count1<10 0\fi \the\count1   
         \ampm                               
    }
\date{\today}
\begin{document}
\subjclass[2010]{58J53, 58J50}
\keywords{sub-Laplacian, 
subriemannian manifold, isospectral,  
heat kernel, pseudo $H$-type group}

\title[Sub-Laplacians and their spectrum of pseudo $H$-type nilmanifolds]{Spectral theory of a class of nilmanifolds attached to Clifford modules}

\author{Wolfram Bauer, Kenro Furutani, Chisato Iwasaki \\ and Abdellah Laaroussi}
\thanks{{\bf Corresponding author}: {\sc Wolfram Bauer}, Institut f\"{u}r Analysis, Welfengarten 1, 30167 Hannover, Germany. email: \texttt{bauer@math.uni-hannover.de}\endgraf 
{\sc Kenro Furutani}, Department of Mathematics, Tokyo University of Science, 2641 Yamazaki, Noda, Chiba 287-8510, Japan. email: 
\texttt{furutani$\_$kenro@ma.noda.tus.ac.jp}\endgraf
{\sc Chisato Iwasaki}, Department of Mathematics, School of Science, 
University of Hyogo,  2167 Shosha Himeji, Hyogo 671-2201, Japan. email: \texttt{iwasaki@sci.u-hyogo.ac.jp} \endgraf
{\sc Abdellah Laaroussi}, Institut f\"{u}r Analysis, Welfengarten 1, 30167 Hannover, Germany. email: \texttt{abdellah.laaroussi@math.uni-hannover.de}}

\thanks{The first and the last named author has been supported by the priority program SPP 2026 {\it geometry at infinity} of Deutsche Forschungsgemeinschaft (project number BA 3793/6-1), the second named author was supported by the Grant-in-aid for Scientific Research (C) 
No.17K05284, JSPS;  the third named author was supported by the Grant-in-aid for Scientific Research (C) No.24540189, JSPS} 




\begin{abstract}
We determine the spectrum of the sub-Laplacian on pseudo ${H}$-type nilmanifolds and present pairs of isospectral but non-diffeomorphic nilmanifolds with respect to the sub-Laplacian. We observe that these pairs are also isospectral with respect to the Laplacian. More generally, 
our method allows us to construct an arbitrary number of isospectral but mutually non-diffeomorphic nilmanifolds. Finally, we present two nilmanifolds of different dimensions such that the short time heat trace expansions of the corresponding sub-Laplace operators coincide up to a term which vanishes to infinite order as time tends to zero. 
\end{abstract}
\maketitle
\thispagestyle{empty}
\section{Introduction}
\label{Section_Introduction}
In 1966 Mark Kac's famous paper \cite{Kac} 
asked the questions {\it ''Can one hear the shape of a drum?''}.  
This work can be regarded as the beginning of a central topic of spectral geometry although the problem itself traces back to Hermann Weyl' s 
work at the beginning 
of the 20th century. Especially in the multi-dimensional situation, a negative answer to the above question was expected early on. 
Therefore, an important task was to construct isospectral but non-isometric  or even non-diffeomorphic manifolds. Such examples allow to determine geometric 
properties that are not determined by the spectrum. In high dimensions the first example of such manifolds 
was given by J. Milnor even earlier in 1964.  In \cite{Mi}  a pair of 16-dimensional flat tori have been constructed which are isospectral but non-isometric.  
Nowadays, a general  construction method by T. Sunada \cite{Su} and a wide range of examples are known, cf. \cite{GWW,G, Ik1, Ik2, Mc}. In particular, 
they include lens spaces, spherical space forms or Heisenberg manifolds.  
Generalizing the last example the isospectrality problem may be considered for quotients $\Gamma \backslash \mathbb{G}$ of nilpotent Lie groups $\mathbb{G}$ 
of step $k\geq 2$ by a lattice $\Gamma$. In the following we will call such manifolds {\it $k$-step nilmanifolds}. Via an adaptation of representation theoretical methods due to 
C.S. Gordon and N.E. Wilson, pairs of isospectral nilmanifolds $(\Gamma_1\backslash \mathbb{G},\,\, \Gamma_2\backslash \mathbb{G})$ of step $k \geq 3$ 
were constructed in \cite{G}. Different from the known examples based on Sunada's theorem these manifolds need not to be isospectral for the Laplacian on $1$-forms. 
 
In the realm of Riemannian geometry it remains an interesting problem to construct isospectral but non-diffeomorphic manifolds in a systematic way. Moreover, by  restricting M. Kac's question to specific sub-classes of smooth manifolds (e.g. spheres or certain nilmanifolds) or by considering the spectrum of geometric operators different from the Laplacian one is 
led to new classification problems.   
\vspace{1mm}\par
In the present paper we consider M. Kac's question for a class of subriemannian manifolds $M$ carrying a geometrically defined second order sub-elliptic differential operator, 
called sub-Laplacian. More precisely, in our setup $M$ is assumed to be a  nilmanifold of step 2 whose covering simply connected nilpotent Lie group is of {\it pseudo $H$-type}. 
Such groups are generalizations of the well known {\it Heisenberg type groups} introduced by A. Kaplan in \cite{Ka1}. Their Lie algebras are called of {\it pseudo $H$-type} as well and 
were first considered in \cite{Ci}. Pseudo $H$-type Lie algebras are constructed from Clifford algebras $C\ell_{r,s}$ of signature $(r,s)$ and their {\it (admissible) modules}, cf. Section 
\ref{Section_Pseudo_H_type_algebras} or \cite{Ci,FM2, FM1, FM} for a definition and more details. We also recall that the existence of lattices $\Gamma$ in pseudo $H$-type Lie groups $\mathbb{G}$ 
has been proven in \cite{FM}. With respect to a standard (integral) lattice we can therefore consider compact  left-coset spaces $\Gamma \backslash \mathbb{G}$. 
\vspace{1mm}\par 
Based on an explicit heat trace  formula for the sub-Laplacian combined with the recent classification of pseudo $H$-type algebras in \cite{FM2,FM1} we can give the negative 
answer to M. Kac's question in this non-standard setting and present a list of new examples. The subriemannian structure we deal with naturally extends  
to a Riemannian structure and we may as well consider the corresponding Laplacian $\Delta$ on $M$. As it turns out in our examples the difference 
$D:=\Delta-\Delta_{\textup{sub}}$ is a "sum-of-squares-operator", i.e. it can be expressed in the form 
\[
D={-\frac{1}{2}\sum_k {Z_k}^2}
\]
with globally defined vector fields $Z_k$ on $M$. Moreover, the operators $D$ and $\Delta_{\textup{sub}}$ commute and therefore $\Delta$ and 
$\Delta_{\textup{sub}}$ commute as well. As a consequence we have obtained new examples of isospectral, non-diffeomorphic manifolds in the usual sense, i.e. with respect to the Riemannian structure.    

Different from previously  studied isospectral, non-isometric quotients $(\Gamma _1\backslash \mathbb{G}, \Gamma _2\backslash \mathbb{G})$ for which the covering simply connected Lie group $\mathbb{G}$ is fixed and the lattice $\Gamma$ varies we note that in our construction also the group $\mathbb{G}$ is varying in the definition of both manifolds. Furthermore, by choosing the space dimension suitably high our method not only allows us to select pairs but any given number of isospectral, non-diffeomorphic nilmanifolds.
\vspace{1mm}\par 
Before stating the results more in detail we review some definitions. By a subriemannian manifold we understand a triple $(M, \mathcal{H}, \langle \cdot, \cdot \rangle)$ where $M$ is a smooth manifold (orientable and without boundary),  $\mathcal{H}$ is a bracket generating subbundle in the tangent bundle $TM$ and $\langle \cdot, \cdot \rangle$ denotes a family of inner products 
on $\mathcal{H}$ which smoothly vary with the base point. Recall that $\mathcal{H}$ is called {\it bracket generating} if vector fields taking values in $\mathcal{H}$ together with a finite number of 
their iterated brackets span the tangent space at any point of $M$. In addition, let us assume that $\langle  \cdot,\cdot\rangle$ is obtained by restricting a Riemannian metric on $M$  to $\mathcal{H}$. Then we can identify tangent and cotangent space via this metric and consider the orthogonal complement 
${N_{\mathcal{H}}}^{\perp}$ of the bundle 
$$N_{\mathcal{H}}=\big{\{}\theta\in T^{*}M~|~ \theta(X)=0,~{^{\forall}X}\in\mathcal{H}~\big{\}}$$ 
normal to $\mathcal{H}$. With the projection  $\mathcal{P}:T^*M\to {N_{\mathcal{H}}}^{\perp}$ along the normal bundle the {\it sub-Laplacian} $\Delta_{\textup{sub}}$ is defined as the composition 
$$\Delta_{\textup{sub}}= d^{*}\circ \mathcal{P}\circ d :C^{\infty}(M)\to C^{\infty}(M).$$
Here $d$ and $d^*$ denote the exterior derivative and its adjoint with respect to the Riemannian metric. 
Locally and modulo a first order operator $\Delta_{\textup{sub}}$ is expressed as a sum of squares of mutually orthogonal vector fields taking values in the subbundle $\mathcal{H}$. 
More generally and based on the construction of the {\it Popp measure}, an intrinsic sub-Laplacian can be defined in a different way on any equi-regular subriemannian manifold 
even if the family of inner products $\langle \cdot, \cdot \rangle$ is only given on $\mathcal{H}$, cf. \cite{ABGR}. 
The manifolds $M$ in this paper are equipped with a bracket generating subbundle $\mathcal{H}$ which is trivial as a vector bundle and a metric on $\mathcal{H}$ which naturally extends to  a 
Riemannian metric on $M$. Hence  it can be shown that in our setting both definitions  of the sub-Laplacian coincide. More precisely,  there is a globally defined frame $\{X_i\}$ of $\mathcal{H}$ which is 
orthonormal at any point and skew-symmetric with respect to a naturally chosen volume form such that: 
\footnote{In oder to simplify the heat kernel expression we have chosen the factor $\frac{1}{2}$ in front of the sum.}
\[
\Delta_{\textup{sub}}=-\frac{1}{2}\sum_i \,{X_i}^2.
\]
\par 
\vspace{1mm}\par 
As is well-known the {\it bracket generating property} (also called {\it H\"{o}rmander condition}) implies that $\Delta_{\textup{sub}}$ is a {\it sub-elliptic} operator 
(i.e. it satisfies an ``a priori estimate with a loss of derivative'', cf. \cite{Ho}). Clearly, this property does not depend on the chosen Riemannian metric. As a 
consequence it can be shown that the sub-Laplacian on a closed manifold $M$ has a compact resolvent and spectrum only consisting of eigenvalues with finite multiplicities (see \cite{Ho}). 
Hence we can define the notion of isospectrality of two given subriemannian manifolds by replacing the spectrum of the Laplacian with the spectrum of the sub-Laplacian. 
Unlike in the elliptic case a precise asymptotic expansion formula for the eigenvalue sequence for a general subriemannian manifold is unknown (however, see \cite{II}). 
In the special case where $M$ is a 2-step nilmanifolds it was shown in \cite{BFI1} that the heat trace of the sub-Laplacian admits an asymptotic expansion similar to the heat 
trace expansion of the Laplacian on a torus. 
\vspace{1mm}\par  
In order to detect isospectral (subriemannian) nilmanifolds we first need to determine the spectrum of the sub-Laplacian $\Delta_{\textup{sub}}^{\Gamma\backslash {\mathbb{G}}}$ on a 2-step nilmanifold 
$M= \Gamma \backslash \mathbb{G}$.  Based on an explicit expression of the heat kernel for $\Delta_{\textup{sub}}$ on the covering group $\mathbb{G}$ in \cite{BGG2,CCFI,Fu} 
a formula for the heat trace of $\Delta_{\textup{sub}}^{\Gamma\backslash {\mathbb{G}}}$ descended from $\mathbb{G}$ to $M= \Gamma \backslash \mathbb{G}$ was obtained in \cite{BFI2}. 
In case of a pseudo $H$-type group $\mathbb{G}$ this trace formula simplifies further and in principle can be used to explicitly calculate the spectrum of $\Delta_{\textup{sub}}$ on $M$. 
However, we need not to perform the full calculation. In order to identify isospectral manifolds it is sufficient to compare the corresponding trace formulas. 
\vspace{1ex}\par 
In a second step we need to classify non-diffeomorphic nilmanifolds $\Gamma_1 \backslash \mathbb{G}_1$ and $\Gamma_2 \backslash \mathbb{G}_2$ of the same dimension. First, we 
reduce this task to a classification of pseudo $H$-type Lie algebras up to isomorphisms (cf. Corollary \ref{corollary_non_diffeomorphic_nilmanifolds}). Then we apply the  
very recent classification results in \cite{FM1,FM2}. 
\vspace{1ex}\par 
The paper is organized as follows: In Section \ref{Section_Heat_kernel_Liegroups} we introduce the sub-Laplacian on a general 2-step nilpotent Lie group $\mathbb{G}$ 
and we recall an explicit integral expression of its heat kernel known as {\it Beals-Gaveau-Greiner formula}, cf. \cite{BGG1,BGG2,CCFI}. 
\vspace{1mm}\par 
Assuming the existence of a lattice $\Gamma$ in $\mathbb{G}$ we decompose the sub-Laplacian $\Delta_{\textup{sub}}$ on the compact nilmanifold 
$M=\Gamma \backslash \mathbb{G}$ into an infinite sum of elliptic operators acting on line bundles in Section \ref{section_3}. Via this method 
we obtain a decomposition of the heat trace of $\Delta_{\textup{sub}}^{\Gamma\backslash {\mathbb{G}}}$ into the heat traces of its component elliptic operators, cf. \cite{BFI2}, 
and we present a trace formula for the sub-Laplacian on $M$.
\vspace{1mm}\par 
In Section \ref{Section_Pseudo_H_type_algebras} we recall the notion of pseudo $H$-type Lie algebras and groups following \cite{Ci,FM1,FM}. We discuss the existence and some 
basic properties of integral lattices for such groups. These will play a role in our construction in Section \ref{Section_non_diffeomorphic_isospectral}. 
\vspace{1mm}\par 
In Section \ref{Section_pseudo_H_type_algebra}  we study the eigenvalues of a matrix-valued function which encodes the structure constants of the pseudo $H$-type 
Lie algebra. These data are essential in the calculation of the heat kernel of the sub-Laplacian in Section \ref{Section_Heat_kernel_Liegroups} and the trace formula in Section 
\ref{section_3}. Based on the trace formula we give a criterion for isospectrality of two pseudo $H$-type nilmanifolds in Section \ref{SpecH-typeNil} (Theorem \ref{multiple spectrum}). 
\vspace{1mm}\par
The last sections contain our main results. We use the classification of pseudo $H$-type Lie algebras in \cite{FM2,FM1} to construct finite families of isospectral, non-diffeomorphic 
pseudo $H$-type nilmanifolds. Finally, we present two nilmanifolds of different dimensions such that the short time heat trace expansions of the corresponding sub-Laplace operators 
coincide up to a term vanishing to infinite order as time tends to zero. 
\section{Heat kernel on two step nilpotent Lie groups}
\label{Section_Heat_kernel_Liegroups}
We recall the integral form of the heat kernel for a sub-Laplacian on simply connected two step nilpotent Lie groups given in \cite{BGG1,BGG2}, see also 
\cite{CCFI,Fu}. 
\subsection{Sub-Laplacian on two step nilpotent groups}
Let $\mathbb{G}$ be a simply connected two step nilpotent Lie group with Lie algebra $\mathcal{N}$. We assume that 
\begin{equation}\label{GL_assumtion_center}
\big{[}\mathcal{N},\mathcal{N}\big{]}=\mbox{\it center of } \mathcal{N}
\end{equation}
and we fix a basis $\{X_i,\, Z_k\:| \:  i=1, \cdots , N, \; k=1, \cdots, d\}$ of $\mathcal{N}$ such that $\{Z_k\}_{k=1}^{d}$ and $\{X_i\}_{i=1}^{N}$ span the center $[\mathcal{N},\mathcal{N}]$  and its 
complement, respectively. Moreover, we assume that $\mathcal{N}$ is equipped with an inner product with respect to which $\{X_i,\,Z_k\}$ 
becomes an orthonormal basis. Hence the Lie algebra $\mathcal{N}$ is decomposed into an orthogonal sum 
\[
\mathcal{N}=\textup{span}\big{\{} X_1, \cdots, X_N \big{\}}\oplus_{\perp}\big{[}\mathcal{N},\mathcal{N}\big{]}
\cong \mathbb{R}^{N}\oplus_{\perp}\mathbb{R}^{d},
\]
The expansion of Lie brackets
\begin{equation}\label{GL_definition_structure_constants}
[X_i,X_j]=\sum\limits_{k=1}^{d} c_{i\,j}^kZ_k,
\end{equation}
defines the structure constants $c_{i\,j}^k=-c_{j\,i}^k$. Given $z=\sum_{k=1}^d z_kZ_k\in [\mathcal{N}, \mathcal{N}]$ we denote by $\Omega(z)$ the skew-symmetric matrix 
\begin{equation}\label{Omega(z)}
\Omega(z)= \sum\limits_{k=1}^{d}\,z_k\,\big{(} c_{i\,j}^k \big{)}_{i,j}\in \mathbb{R}(N)=: \mbox{\it algebra of } N \times N \mbox{ \it real matrices}.
\end{equation}
\begin{rem}
{\it
Throughout the paper we identify the group $\mathbb{G}$ with $\mathbb{R}^{N}\times \mathbb{R}^{d}$ via the above coordinates, i.e. 
\[
\mathbb{G}\ni g= \sum_{i=1}^N x_i X_i + \sum_{k=1}^d z_k Z_k \longleftrightarrow (x_1,\cdots,x_N,z_1,\cdots,z_d)\in
\mathbb{R}^{N}\times\mathbb{R}^{d}\cong \mathcal{N}.
\]
Then the exponential map $\exp:\mathcal{N}\stackrel{\approx\,\,}\rightarrow\mathbb{G}$ is the identity. Via the Baker-Campbell Hausdorff 
formula and this identification we can express the group  product $*$ on $\mathbb{G} \cong \mathcal{N}$ in the form
\begin{equation*}
g * h =g+h+ \frac{1}{2}\big{[}g,h \big{]}. 
\end{equation*}
More explicitly and with respect to the above coordinates one has: 
\begin{multline*}
g*h=(x_1,\cdots,x_N,z_1,\cdots,z_d)*(x^{\prime}_1,\cdots,x_N^{\prime},z^{\prime}_1,\cdots,z_d^{\prime})\\
=\Big{(}x_1+x^{\prime}_1,\cdots,x_N+x^{\prime}_N,
z_1+z^{\prime}_1+\frac{1}{2}\sum_{i,j}
c_{i\,j}^{1}x_ix^{\prime}_j,\cdots,
z_d+z^{\prime}_d+\frac{1}{2}\sum_{i,j}\,c_{i\,j}^dx_ix^{\prime}_j\Big{)}.
\end{multline*}}
\end{rem}
\par 
Let $\tilde{X}_i$ denote the left invariant vector field on $\mathbb{G}$ corresponding to $X_i \in \mathcal{N}$ and consider the {\it sub-Laplacian}
\begin{equation}\label{sub-Laplacian}
\Delta_{\textup{sub}}^{\mathbb{G}}=-\frac{1}{2}\sum\limits_{i=1}^{N} {\tilde{X}_i}^{\,2}. 
\end{equation}
Based on (\ref{GL_assumtion_center}) the operator  $\Delta_{\textup{sub}}^{\mathbb{G}}$ is known to be sub-elliptic \cite{Ho} and essentially selfadjoint 
in $L_2(\mathbb{G})$ with respect to the Haar measure and considered on compactly supported smooth functions 
$C_{0}^{\infty}(\mathbb{G})$, cf. \cite{Stri86, Stri89}. 
\subsection{Beals-Gaveau-Greiner formula}
Next we recall the integral expression of the kernel function $K(t,g,h)\in C^{\infty}(\mathbb{R}_{+}\times\mathbb{G}
\times\mathbb{G})$ of the heat operator
\begin{equation}\label{GL_Definition_heat_operator}
e^{-t\,\Delta_{\textup{sub}}^{\mathbb{G}}}, 
\end{equation}
where $\mathbb{G}$ is a general 2-step nilpotent Lie group as above. The existence of a smooth kernel has been shown in \cite{Stri86, Stri89} and since the 
sub-Laplacian $\Delta_{\textup{sub}}^{\mathbb{G}}$ is a left-invariant operator it follows that $K$ is a convolution kernel, i.e. 
$$K^{\mathbb{G}}(t,g,h)=k^{\mathbb{G}}(t,\,g^{-1}\: * \: h)$$
with a smooth function $k^{\mathbb{G}}\in C^{\infty}(\mathbb{R}_{+}\times \mathbb{G})$.
\vspace{1mm}\par
In \cite{BGG1, BGG2, CCFI, Fu} an integral expression of $k^{\mathbb{G}}$ is given explicitly. Below we will calculate the 
spectrum of the sub-Laplacian on a class of nilmanifolds by using this expression. Recall that in the integrand of $k^{\mathbb{G}}$ two functions ({\it action and 
volume function}) appear. The integration is taken over a space which can be interpreted as the {\it characteristic variety} of the sub-Laplacian. Here we will neither 
present the details of this structure nor a proof of the next theorem. 
\begin{thm}[{Beals-Gaveau-Greiner formula}, \cite{BGG1,CCFI}]\label{heat kernel by BGG1} 
The integral kernel of the heat operator (\ref{GL_Definition_heat_operator}) has the form:
\[
K^{\mathbb{G}}(t,g,h)=k^{\mathbb{G}}(t,g^{-1}\: * \:  h)=\frac{1}{(2\pi
t)^{N/2+d}}\int_{\mathbb{R}^d}e^{-\frac{f(\tau,\,g^{-1}\: * \: h)}{t}}\,W(\tau)\,d\tau,
\]where the functions $f=f(\tau,g)\in C^{\infty}(\mathbb{R}^{d}\times
\mathbb{G})$ and $W(\tau)\in C^{\infty}(\mathbb{R}^d)$ are given as follows:
put $g=(x,z)\in \mathbb{R}^N\times\mathbb{R}^d$, then
\begin{align*}
&f(\tau,g)=f(\tau,x,z)=\sqrt{-1}\langle \tau,\,z \rangle 
+\frac{1}{2}\Big{\langle}\Omega(\sqrt{-1}\tau)\coth\bigr(\Omega(\sqrt{-1}\tau)\bigr)
\cdot x,x\Big{\rangle},\\
&W(\tau)=
\left\{\det\frac{\Omega(\sqrt{-1}\tau)}{\sinh\Omega(\sqrt{-1}\tau)}\right\}^{1/2},
\end{align*}
where $\langle z,z^{\prime}\rangle=\sum\limits_{k=1}^d z_k z_k^{\prime}$ denotes the Euclidean inner product on $\mathbb{R}^d$. 
\end{thm}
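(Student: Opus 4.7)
The plan is to diagonalize $\Delta_{\textup{sub}}^{\mathbb{G}}$ by a partial Fourier transform in the central variables $z$, on which the operator is translation-invariant. Writing
\[
k^{\mathbb{G}}(t, x, z) = \frac{1}{(2\pi)^d}\int_{\mathbb{R}^d} e^{\sqrt{-1}\langle \tau, z\rangle}\, \hat{k}(t, x, \tau)\, d\tau,
\]
and using the explicit form $\tilde{X}_i = \partial_{x_i} + \frac{1}{2}\sum_{j, k} c^k_{ij}\, x_j\, \partial_{z_k}$ of the left-invariant fields, one checks that $\hat{k}(t, \cdot, \tau)$ is the heat kernel with pole at the origin of the magnetic Schr\"odinger operator
\[
L_\tau = -\frac{1}{2}\sum_{i=1}^N \Big(\partial_{x_i} + \frac{\sqrt{-1}}{2}\bigl(\Omega(\tau) x\bigr)_i\Big)^2
\]
on $\mathbb{R}^N$ with constant skew-symmetric magnetic field $\Omega(\tau)$.

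Next I would reduce $L_\tau$ to a Mehler-type calculation. Because $\Omega(\tau)$ is real skew-symmetric, a real orthogonal change of coordinates in $x$ puts it into standard $2\times 2$ block form with rotation frequencies $\mu_j(\tau)$ together with a kernel block on which $\Omega(\tau)$ vanishes; on each invariant $2$-plane the operator $L_\tau$ becomes a two-dimensional Landau-type harmonic oscillator whose heat kernel is given by the classical Mehler formula in the frequency $\mu_j(\tau)$, while on the kernel of $\Omega(\tau)$ one recovers the free Gaussian kernel. Multiplying the block kernels together yields
\[
\hat{k}(t, x, \tau) = \frac{W(\tau)}{(2\pi t)^{N/2}}\, \exp\!\Big(-\frac{1}{2t}\big\langle \Omega(\sqrt{-1}\tau)\coth\bigl(\Omega(\sqrt{-1}\tau)\bigr) x, x\big\rangle\Big),
\]
with $W(\tau) = \bigl\{\det\bigl[\Omega(\sqrt{-1}\tau)/\sinh\Omega(\sqrt{-1}\tau)\bigr]\bigr\}^{1/2}$, interpreted through the holomorphic functional calculus. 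Substituting this into the Fourier inversion and absorbing the phase $e^{\sqrt{-1}\langle\tau, z\rangle}$ into the exponent produces precisely the Beals-Gaveau-Greiner integrand with $f$ and $W$ as stated.

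The main technical obstacle is the behaviour of the matrix-valued functions $\Omega(\sqrt{-1}\tau)\coth\bigl(\Omega(\sqrt{-1}\tau)\bigr)$ and $\Omega(\sqrt{-1}\tau)/\sinh\Omega(\sqrt{-1}\tau)$ on the locus in $\mathbb{R}^d$ where $\Omega(\tau)$ has zero eigenvalues, where $\coth$ has poles and the block decomposition degenerates. One removes this apparent singularity by noting that $x\coth x = 1 + x^2/3 + \cdots$ and $x/\sinh x = 1 - x^2/6 + \cdots$ are entire, so the corresponding matrix functions are real-analytic in $\tau$ and the integral representation is valid throughout $\mathbb{R}^d$. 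Once the integrand is under analytic control, verification of the formula reduces to two routine checks: that the candidate kernel solves the heat equation (by undoing the Fourier transform block-by-block) and that it converges to the Dirac distribution at the group identity as $t\to 0^+$, which follows from the stationary-phase behaviour of the $\tau$-integral combined with the Gaussian concentration in $x$.
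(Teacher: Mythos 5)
You should first note a mismatch of scope: the paper does not prove Theorem \ref{heat kernel by BGG1} at all --- it explicitly declines to (``we will neither present the details of this structure nor a proof'') and imports it from \cite{BGG1,CCFI}, where $f$ is produced by the complex Hamilton--Jacobi method and $W$ by solving a transport equation along the bicharacteristic flow. Your route --- partial Fourier transform in the central variables $z$ (legitimate, since by the group law the coefficients of the $\tilde{X}_i$ do not involve $z$), reduction to a magnetic Schr\"odinger operator with constant skew field $\Omega(\tau)$, and the Mehler formula block-by-block along the skew-normal form of $\Omega(\tau)$ --- is a genuinely different and more elementary derivation, essentially Gaveau's original method for the Heisenberg group, and it is viable for the general $2$-step case considered here. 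Your handling of the degeneracy locus (evenness and analyticity of $x\coth x$ and $x/\sinh x$, so that the matrix functions of the Hermitian matrix $\Omega(\sqrt{-1}\tau)=\sqrt{-1}\,\Omega(\tau)$, which has real spectrum, are real-analytic in $\tau$) is correct. A cosmetic point: with the paper's conventions the left-invariant field is $\tilde{X}_i=\partial_{x_i}+\frac12\sum_{j,k}c^k_{j\,i}x_j\partial_{z_k}$; your index order differs by the sign $c^k_{ij}=-c^k_{ji}$, which is harmless because only even functions of $\Omega$ appear in the final formulas.

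There is, however, one concrete step that fails as written: your displayed formula for $\hat{k}(t,x,\tau)$ has the wrong $t$-dependence, and the assembly step built on it does not produce the stated kernel. The Mehler heat kernel of $L_\tau$ at time $t$ with pole at the origin couples time and field strength only through $t\,\Omega(\tau)$:
\begin{equation*}
\hat{k}(t,x,\tau)=\frac{1}{(2\pi t)^{N/2}}\left\{\det\frac{t\,\Omega(\sqrt{-1}\tau)}{\sinh\bigl(t\,\Omega(\sqrt{-1}\tau)\bigr)}\right\}^{1/2}
\exp\Bigl(-\frac{1}{2t}\bigl\langle t\,\Omega(\sqrt{-1}\tau)\coth\bigl(t\,\Omega(\sqrt{-1}\tau)\bigr)\,x,\,x\bigr\rangle\Bigr),
\end{equation*}
whereas your expression freezes these factors at $t=1$; one checks directly that it does not solve $(\partial_t+L_\tau)\hat{k}=0$. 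Moreover, inserting your $\hat{k}$ into the Fourier inversion yields the prefactor $(2\pi)^{-d}(2\pi t)^{-N/2}$ and the phase $e^{\sqrt{-1}\langle\tau,z\rangle}$, neither of which matches the claimed $(2\pi t)^{-(N/2+d)}$ and $e^{-\sqrt{-1}\langle\tau,z\rangle/t}$ --- so the sentence ``substituting this into the Fourier inversion \dots produces precisely the Beals--Gaveau--Greiner integrand'' is false for your integrand. The fix is routine but must be stated: use the correct Mehler kernel above, then change variables $\tau\mapsto\tau/t$ in the inversion integral; linearity of $\Omega$ gives $t\,\Omega(\sqrt{-1}\tau/t)=\Omega(\sqrt{-1}\tau)$, the Jacobian $t^{-d}$ converts $(2\pi)^{-d}$ into the missing $(2\pi t)^{-d}$, and the phase becomes $e^{\sqrt{-1}\langle\tau,z\rangle/t}$, which matches $e^{-\sqrt{-1}\langle\tau,z\rangle/t}$ after $\tau\mapsto-\tau$ by evenness of the remaining factors. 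Finally, to conclude that the resulting kernel is that of $e^{-t\Delta_{\textup{sub}}^{\mathbb{G}}}$ you should invoke essential self-adjointness and uniqueness of the smooth heat kernel from \cite{Stri86,Stri89}, rather than only the two formal checks (heat equation and $\delta$-initial condition) you mention.
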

\begin{rem}
{\it 
Later on we will use the notation $ \langle \bullet,\,\bullet \rangle_{r,s}$ for a non-degenerate indefinite scalar product with the signature $(r,s)$ such that 
$\langle \bullet,\,\bullet \rangle=\langle \bullet,\,\bullet \rangle_{d,0}$.}
\end{rem}
We call $f=f(\tau,x,z)$ and $W(\tau)d\tau$ the {\it complex action function} and the {\it volume form}, respectively. Recall that $f$ is constructed by the {\it complex Hamilton-Jacobi method}, 
and the volume function $W(\tau)$ is sometimes referred to as {\it van Vleck determinant}. It is the Jacobian of the correspondence between the space of initial conditions and boundary conditions 
when we solve the Hamilton equation associated to the symbol of the sub-Laplacian. The solution can be interpreted as the bi-characteristic flow in the subriemannian setting. We recall that the volume 
function satisfies a {\it transportation equation}.
\section{Lattices and decomposition of a sub-Laplacian}
\label{section_3}
Based on Theorem \ref{heat kernel by BGG1}  we describe the heat  kernel of the sub-Laplacian descended to the quotient space $\Gamma\backslash \mathbb{G}$ 
(left coset space) by a lattice $\Gamma$. Such a space is called a (compact) {\it 2-step nilmanifold}. In the following we assume that there exists a {\it lattice} (cocompact discrete subgroup) 
in $\mathbb{G}$. We recall {\it Mal\'cev's Theorem}:
\begin{thm}[Mal\'cev, \cite{Ma,Ra}]
A nilpotent Lie group $G$ possesses a lattice $\Gamma$, i.e.  $\Gamma\backslash \mathbb{G}$ is compact, if and only if 
there exists a basis $\{Y_i\}$ in its Lie algebra $\mathfrak{g}$ such that the structure constants $\{\alpha_{i\,j}^k\}$ defined by
\[
[Y_i,\,Y_j]=\sum_k \alpha_{i\,j}^kY_k
\]
are all rational numbers.
\end{thm}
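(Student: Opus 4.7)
The plan is to prove the two implications separately, with the Baker--Campbell--Hausdorff (BCH) formula and strong Mal'cev bases adapted to the lower central series as the central tools. Since $\mathbb{G}$ is simply connected and nilpotent, $\exp : \mathfrak{g} \to \mathbb{G}$ is a diffeomorphism, and the BCH series terminates at the nilpotency class and has rational coefficients. Consequently the group multiplication, viewed through exponential coordinates, is a polynomial in the coordinate entries whose coefficients are polynomials in the structure constants with rational coefficients only.

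For the sufficiency direction, starting from a basis $\{Y_i\}$ of $\mathfrak{g}$ with rational structure constants I would first refine it, if necessary, to a \emph{strong Mal'cev basis} $\{Y_1,\ldots, Y_n\}$ passing through each term of the lower central series; the rationality of structure constants is preserved. After scaling all basis vectors by a sufficiently large positive integer $N$, chosen so as to clear every denominator appearing in the iterated BCH expressions for products $\exp(a_i N Y_i)$, the set
\[
\Gamma := \bigl\{ \exp(a_1 N Y_1) \cdots \exp(a_n N Y_n) : a_1,\ldots, a_n \in \mathbb{Z} \bigr\}
\]
is closed under multiplication and inversion. It is discrete and cocompact because the global coordinates of the second kind $(a_1,\ldots, a_n)\mapsto \exp(a_1 Y_1)\cdots \exp(a_n Y_n)$ give a diffeomorphism $\mathbb{R}^n \to \mathbb{G}$ under which $\Gamma$ is identified with a full-rank integer lattice in $\mathbb{R}^n$.

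For the necessity direction I would argue by induction on the nilpotency class $r$. The essential step is to show that for the last nontrivial term $Z := \mathbb{G}_r$ of the lower central series (which is central and abelian), $\Gamma \cap Z$ is a lattice in $Z$ and the image of $\Gamma$ in $\mathbb{G}/Z$ is a lattice in the lower-class simply connected nilpotent group $\mathbb{G}/Z$. Granting this, the abelian case furnishes an integer basis for $Z$ whose $\mathbb{Z}$-span is $\log(\Gamma \cap Z)$, the induction hypothesis furnishes a basis of $\mathfrak{g}/\mathfrak{g}_r$ with rational structure constants, and lifting representatives to $\mathfrak{g}$ and concatenating gives a basis $\{Y_i\}$ of $\mathfrak{g}$. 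The structure constants are then rational: the classes of $[Y_i, Y_j]$ modulo $\mathfrak{g}_r$ are rational by the inductive hypothesis, while the residual term in $\mathfrak{g}_r$ equals the logarithm of a commutator of two elements of $\Gamma$ and so lies in the rational lattice $\log(\Gamma \cap Z) \subset \mathfrak{g}_r$.

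The main obstacle I expect is proving the key step above, namely that $\Gamma \cap Z$ is cocompact in $Z$, or equivalently that the image of $\Gamma$ in $\mathbb{G}/Z$ is \emph{discrete}. This is not automatic from the cocompactness of $\Gamma$ in $\mathbb{G}$; it relies crucially on nilpotency, specifically on the fact that $\Gamma Z$ is closed in $\mathbb{G}$ (a statement that fails for general solvable groups). The standard route is to pick a precompact fundamental domain $\mathcal{F}$ for $\Gamma$, to observe that the commutator map $\mathcal{F} \times \mathcal{F} \to Z$ has relatively compact image by continuity, and to combine this with the observation that commutators of elements of $\Gamma$ generate a finite-index subgroup of $\Gamma \cap Z$ via a secondary induction on $r$, thereby forcing $\Gamma \cap Z$ to be cocompact in $Z$.
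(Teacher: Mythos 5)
The paper does not actually prove this statement: it is quoted as Mal\'cev's theorem with references to \cite{Ma} and \cite{Ra}, so there is no internal argument to compare against, and the natural benchmark is the classical proof in Raghunathan's book --- which your proposal reproduces in all essentials. The sufficiency direction (refine a rational basis to a strong Mal\'cev basis adapted to the lower central series, scale by an integer clearing all BCH denominators, and read off discreteness and cocompactness through coordinates of the second kind) is the standard argument and is sound, since nilpotency truncates the Baker--Campbell--Hausdorff series to a polynomial with rational coefficients. For necessity, your induction on the nilpotency class is also the classical route, and you correctly identify where the real work lies: showing $\Gamma \cap Z$ is cocompact in the last nontrivial term $Z=\mathbb{G}_r$, via the (essentially bilinear) commutator map on the appropriate quotients together with the fact that commutators of lattice elements generate a large subgroup of $\Gamma\cap Z$.

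One step is stated too loosely to be literally correct: the claim that the residual term of $[Y_i,Y_j]$ in $\mathfrak{g}_r$ ``equals the logarithm of a commutator of two elements of $\Gamma$''. For arbitrary lifts $Y_i$ this is simply false, and even for lifts of the form $Y_i=\log\gamma_i$ with $\gamma_i\in\Gamma$ (which your induction, as stated, does not guarantee are available), BCH gives $\log\bigl(\gamma_i\gamma_j\gamma_i^{-1}\gamma_j^{-1}\bigr)=[Y_i,Y_j]+\text{(nested brackets of length $\geq 3$)}$, so the bracket is recovered only modulo the third term of the lower central series, not on the nose. The standard repair is to strengthen the inductive statement: prove that the $\mathbb{Q}$-span of $\log\Gamma$ is a rational form of $\mathfrak{g}$ closed under the Lie bracket, by downward induction along the central series, using exactly the two ingredients you name --- group commutators of lattice elements lie in $\Gamma$, and the higher BCH corrections lie deeper in the series --- and then conclude that any vector-space basis chosen inside $\log\Gamma$ has rational structure constants. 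It is worth noting that in the only setting where this paper invokes the theorem, namely $2$-step groups, one has $\mathfrak{g}_3=0$ and your residual-term claim is literally true; but as a proof of the general statement the inductive bookkeeping above must be supplied.
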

\subsection{Torus bundle and a family of elliptic operators}
\label{sub_section_3_1}
We recall a heat trace formula which previously has been obtained in \cite[Theorem 4.2]{BFI2}. Our analysis is essential based on this formula and in order to keep the paper 
self-contained we now repeat the main steps of the calculation. 
\vspace{1mm}\par 
Let $\Gamma$ be a lattice in a simply connected 2-step nilpotent Lie group $\mathbb{G}\cong \mathbb{R}^N\times\mathbb{R}^d$. 
The quotient space $\Gamma \backslash \mathbb{G}$ can be equipped with a subriemannian structure naturally inherited from that of $\mathbb{G}$. Its sub-Laplacian, which we now 
denote by $\Delta_{\textup{sub}}^{\Gamma\backslash {\mathbb{G}}}$, is the operator descended from the sub-Laplacian 
$\Delta_{\textup{sub}}^{\mathbb{G}}$ on $\mathbb{G}$.
\vspace{1mm}\par
For an element $g\in \mathbb{G}$ we will denote by $[g]\in \Gamma\backslash \mathbb{G}$  the corresponding class in the quotient space. 
Then, the heat kernel 
\[
K^{\Gamma\backslash {\mathbb{G}}}(t,[g],[h])
\in C^{\infty}\Big{(}\mathbb{R}_{+}\times \Gamma\backslash \mathbb{G}\times \Gamma\backslash \mathbb{G}\Big{)}
\]
of the sub-Laplacian $\Delta_{\textup{sub}}^{\Gamma\backslash {\mathbb{G}}}$ on the nilmanifold $\Gamma\backslash \mathbb{G}$ is given by
\begin{align}\label{heat_kernel_nilmanifold}
K^{\Gamma\backslash {\mathbb{G}}}(t,[g],[h])
&=\sum\limits_{\gamma\in \Gamma}K^{\mathbb{G}}\big{(}t, \gamma * g,h\big{)}\\
&=\sum\limits_{\gamma\in \Gamma}k^{\mathbb{G}}\big{(}t,g^{-1}*\gamma * h\big{)}
\in C^{\infty}\Big{(}\mathbb{R}_+ \times \Gamma \backslash \mathbb{G}\times  \Gamma \backslash \mathbb{G}\Big{)}.\notag
\end{align}
\par
Assuming the existence of a lattice $\Gamma$ in $\mathbb{G}$ we can decompose the sub-Laplacian into a family differential operators acting on invariant subspaces 
according to a torus bundle structure of  $\Gamma\backslash {\mathbb{G}}$. Next, we present some details and give the heat kernel expression for each component elliptic operator. 
\vspace{1mm}\par 
Let $\mathbb{A}\cong \mathbb{R}^d$ be the center of the group $\mathbb{G}$ where as before the identification is done with respect to the fixed orthonormal basis 
$\{ Z_k\}$ of $\mathbb{A}$. We obtain a principal bundle with the structure group
$\mathbb{A}/(\Gamma \cap \mathbb{A})\cong \mathbb{T}^{\dim \mathbb{A}}=\mathbb{T}^d$
\[
\Gamma\backslash {\mathbb{G}}
\longrightarrow (\Gamma/\Gamma\cap\mathbb{A})\backslash (\mathbb{G}/\mathbb{A})\cong
(\Gamma * \mathbb{A})\backslash \mathbb{G}.
\]
Note that the base space $(\Gamma/\Gamma\cap\mathbb{A})\backslash (\mathbb{G}/\mathbb{A})\cong
(\Gamma *\mathbb{A})\backslash \mathbb{G}$ is also a torus of dimension 
$\dim \mathbb{G} -\dim \mathbb{A}=N+d-d=N.$ Since $\mathbb{A}$ is abelian, the subgroup $\Gamma * \mathbb{A}$ coincides with 
$\Gamma+\mathbb{A}$, i.e. with the sum in the Lie algebra.
\vspace{1mm}\par 
Let ${\bf n}$ be an element in the ``dual lattice'' $[\Gamma\cap\mathbb{A}]^*$ of $\Gamma\cap\mathbb{A}$, that is,  
${\bf n}$ is a linear functional on $\mathbb{A}$ with the property that
\[
{\bf n}(\gamma)\in\mathbb{Z}~\mbox{\it for all}~\gamma\in\Gamma\cap\mathbb{A}.
\]
We may express ${\bf n}$ in the form ${\bf n}=\sum\limits_{k=1}^dn_kZ_k$ with integer coefficients $n_k\in\mathbb{Z}$ such that
\[
{\bf n}(\gamma)=\langle {\bf n}, \gamma\rangle=\sum n_k\langle Z_k,\gamma\rangle \in \mathbb{Z}~\mbox{\it for all}~\gamma\in\Gamma\cap\mathbb{A}.
\]
Then, the function space $C^{\infty}(\Gamma\backslash \mathbb{G})$ is decomposed {via a}  Fourier series expansion:
\[
C^{\infty}(\Gamma\backslash \mathbb{G})\ni {^{\forall} f}~;~f(g)
=\sum\limits_{{\bf n}\in [\Gamma\cap\mathbb{A}]^*}
\int_{\mathbb{T}^d}f(g * \lambda)
\overline{\chi_{\bf n}(\lambda)}{d\lambda,} 
\]
where $\chi_{\bf n}:\mathbb{T}^d\cong\mathbb{A}/(\Gamma \cap \mathbb{A})\to U(1)$ with $\chi_{\bf n}(\lambda)=e^{2\pi\sqrt{-1} \langle {\bf n},\lambda\rangle}$ is a 
unitary character corresponding to a dual element ${\bf n}\in [\Gamma\cap\mathbb{A}]^*$. So, we decompose 
\[
C^{\infty}(\Gamma\backslash \mathbb{G})
=\sum\limits_{{\bf n}\in [\Gamma\cap\mathbb{A}]^*}\mathcal{F}^{({\bf n})},
\]
where
\[
\mathcal{F}^{({\bf n})}
=\left\{\int_{\mathbb{T}^d}f(g * \lambda)\,\overline{\chi_{{\bf n}}(\lambda)} d\lambda~|~f\in C^{\infty}(\Gamma\backslash \mathbb{G})~\right\}.
\]
\par
The subspace $\mathcal{F}^{({\bf n})}$ can be seen as a space of smooth sections of a line bundle $E^{({\bf n})}$ on the base space 
$(\Gamma+\mathbb{A})\backslash \mathbb{G}\cong (\Gamma/\Gamma\cap\mathbb{A})\backslash(\mathbb{G}/\mathbb{A})$ associated to the
character $\chi_{{\bf n}}$. The sub-Laplacian leaves invariant each subspace $\mathcal{F}^{({\bf n})}$ and therefore it can be interpreted 
as a differential operator $\mathcal{D}^{({\bf n})}$ acting on the line bundle $E^{({\bf n})}$. Since the subbundle spanned by the (left)-invariant 
vector fields $\{\tilde{X}_i\: | \: i=1, \cdots, N\}$ defines a connection, i.e., its linear span is equivariant and transversal to the structure group action by 
$\mathbb{A}/(\Gamma \cap \mathbb{A})$, each operator $\mathcal{D}^{({\bf n})}$ is elliptic. Hence the sub-Laplacian 
$$\displaystyle{\Delta_{\textup{sub}}^{\Gamma\backslash\mathbb{G}}=-\frac{1}{2}\sum_i{\tilde{X}_i}^2}$$ 
can be seen as an infinite sum of elliptic operators. As a consequence we obtain a decomposition of the operator trace: 
\begin{equation}\label{GL_decomposition_of_operator_trace}
\text{{\bf tr}}\left(e^{-\Delta_{\textup{sub}}^{\Gamma\backslash\mathbb{G}}}\right)=
\sum\limits_{{\bf n}\in [\Gamma\cap\mathbb{A}]^*}\text{{\bf
tr}}\left(e^{-t\mathcal{D}^{({\bf n})}}\right).
\end{equation}
\par
Recall that $\{Z_k\:|\: k=1, \cdots, d\}$ denotes an orthonormal basis of the center $[\mathcal{N}, \mathcal{N}]$ of $\mathcal{N}$. As before we write 
$\tilde{Z}_k, k=1, \cdots, d$ for the corresponding left-invariant vector fields on the group $\mathbb{G}$. We equip $\mathbb{G}$ with a left-invariant 
Riemannian metric defined by assuming that the frame $[\tilde{X}_1, \cdots, \tilde{X}_N, \tilde{Z}_1, \cdots, \tilde{Z}_d]$ is orthonormal at any point of 
$\mathbb{G}$. Then the corresponding Laplacian has the form
\begin{equation}\label{Laplacian}
\Delta^{\mathbb{G}}=\Delta_{\textup{sub}}^{\mathbb{G}}-\frac{1}{2}\sum\limits_{k=1}^{d}\tilde{Z}_k^2. 
\end{equation}
The action of the difference $\Delta^{\mathbb{G}}-\Delta_{\textup{sub}}^{\mathbb{G}}$ on the subspace 
$\mathcal{F}^{({\bf n})}$ for each dual element ${\bf n}\in [\Gamma\cap\mathbb{A}]^{*}$ is given as follows:
\begin{prop}\label{action of central vector fields}
Let $f\in\mathcal{F}^{({\bf n})}$, then
\[
(\Delta^{\mathbb{G}}-\Delta_{\textup{sub}}^{\mathbb{G}})f=-\frac{1}{2}\sum\limits_{k=1}^{d}\tilde{Z}_k^2(f)=2\pi^2\sum\limits_{k=1}^{d}{n_k}^2\cdot\,f.
\]
\end{prop}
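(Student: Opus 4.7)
The first equality is immediate from (\ref{Laplacian}), which gives $\Delta^{\mathbb{G}}-\Delta_{\textup{sub}}^{\mathbb{G}}=-\frac{1}{2}\sum_{k=1}^d \tilde{Z}_k^2$. So the substance of the proposition is the identification of the right-hand side as a scalar operator on the subspace $\mathcal{F}^{({\bf n})}$.

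My plan is first to derive the defining transformation property of elements of $\mathcal{F}^{({\bf n})}$ under right translation by the central subgroup $\mathbb{A}$. Starting from an element
$$F(g)=\int_{\mathbb{T}^d}f(g*\lambda)\,\overline{\chi_{{\bf n}}(\lambda)}\,d\lambda\in\mathcal{F}^{({\bf n})},$$
I would evaluate $F(g*\mu)$ for $\mu\in\mathbb{A}$. Since $\mathbb{A}$ is central and abelian, $g*\mu*\lambda=g*(\mu+\lambda)$, so the substitution $\lambda'=\mu+\lambda$ (together with translation invariance of the measure on $\mathbb{T}^d$) yields $F(g*\mu)=\chi_{{\bf n}}(\mu)F(g)$, i.e. $F(g*\mu)=e^{2\pi\sqrt{-1}\langle {\bf n},\mu\rangle}F(g)$.

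Second, I would compute $\tilde{Z}_k F$ using the definition of the left-invariant vector field: $\tilde{Z}_k F(g)=\frac{d}{dt}\big|_{t=0}F(g*tZ_k)$. Applying the transformation property above with $\mu=tZ_k$, and noting that $\langle {\bf n},tZ_k\rangle=t n_k$, one obtains the eigenvalue equation $\tilde{Z}_k F=2\pi\sqrt{-1}\,n_k\, F$, hence $\tilde{Z}_k^2 F=-4\pi^2 n_k^2 F$. Summing over $k$ and multiplying by $-\frac12$ gives the claimed identity
$$-\frac{1}{2}\sum_{k=1}^d \tilde{Z}_k^2(F)=2\pi^2\sum_{k=1}^d n_k^2\cdot F.$$

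There is no real obstacle here; the only subtlety to keep in mind is that the product $*$ on $\mathbb{G}$ is not the Euclidean addition on $\mathcal{N}$, so the interchange $g*\mu*\lambda=g*(\mu+\lambda)$ must be justified via centrality of $\mu,\lambda\in[\mathcal{N},\mathcal{N}]$ (where the Baker--Campbell--Hausdorff correction vanishes). Once this is observed, the proof reduces to differentiating an exponential character.
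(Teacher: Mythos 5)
Your proof is correct. The paper in fact states Proposition \ref{action of central vector fields} without giving a proof, and your argument is exactly the computation evidently intended: the equivariance $F(g*\mu)=\chi_{\bf n}(\mu)F(g)$ (justified, as you note, by centrality of $\mathbb{A}$, which kills the Baker--Campbell--Hausdorff correction and makes $\lambda\mapsto f(g*\lambda)$ well defined on $\mathbb{T}^d$) combined with differentiation along $t\mapsto g*tZ_k$ gives $\tilde{Z}_k F=2\pi\sqrt{-1}\,n_k F$, hence $\tilde{Z}_k^2F=-4\pi^2 n_k^2 F$, and the constant $2\pi^2\sum_k n_k^2$ matches the statement.
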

\subsection{Heat trace of the component  operators}
\allowdisplaybreaks{
Next we give an expression of the heat trace of each operator $\mathcal{D}^{({\bf n})}$. Recall that the heat kernel 
$K^{\Gamma\backslash\mathbb{G}}$ of $\Delta_{\textup{sub}}^{\Gamma\backslash {\mathbb{G}}}$ is given by 
(\ref{heat_kernel_nilmanifold}). Let $\mathcal{F}_{\Gamma}$ and $\mathcal{F}_{\Gamma\cap\mathbb{A}}$ be a fundamental domain for the lattice 
$\Gamma$ in $\mathbb{G}$ and $\Gamma\cap\mathbb{A}$ in the Euclidean space $\mathbb{A}$, respectively. Then the integral
\[
k_{\mathcal{D}^{({\bf n})}}\big{(}t,[g],[h]\big{)}
=\int_{\mathcal{F}_{\Gamma\cap\mathbb{A}}}K^{\Gamma\backslash\mathbb{G}}\big{(}t,[g],[h] *\lambda\big{)}
\overline{\chi_{\bf n}(\lambda)}\,d\lambda
\]
is the kernel function for the heat operator $e^{-t \mathcal{D}^{({\bf n})}}$, that is it satisfies
\begin{align*}
&k_{\mathcal{D}^{(\bf n)}}(t,[g] * \theta,[h])=k_{\mathcal{D}^{(\bf
n)}}(t,[g*\theta],[h])
=\overline{\chi_{\bf n}(\theta)}k_{\mathcal{D}^{(\bf n)}}(t,[g],[h]),\\
&k_{\mathcal{D}^{(\bf n)}}(t,[g],[h]*\theta)=k_{\mathcal{D}^{(\bf
n)}}(t,[g],[h*\theta])
=\chi_{\bf n}(\theta)k_{\mathcal{D}^{(\bf n)}}(t,[g],[h]),
\end{align*}
where $\theta\in \mathbb{A}$.
Let $\mathbb{M}=\{\mu_i\}$ be a set of complete representatives of the coset space
$\Gamma/(\Gamma\cap \mathbb{A})$, then
the trace of the heat operator $\displaystyle{e^{-t\mathcal{D}^{({\bf n})}}}$ is
given as follows:
\begin{prop}[see \cite{BFI2}] For each ${\bf n}$ in  the dual lattice $[\Gamma\cap\mathbb{A}]^*$ and with the heat kernel $K^{\mathbb{G}}$ of the 
sub-Laplacian on $\mathbb{G}$: 
\label{heat trace for component elliptic operators}
\begin{align*}
&{\textup{Vol}\bigr(\mathbb{A}/(\Gamma\cap\mathbb{A})\bigr)}
\cdot\text{{\bf tr}}\left(e^{-t \mathcal{D}^{(\bf n)}}\right)\,=\,
\int_{\mathcal{F}_{\Gamma}}\left(\sum\limits_{\gamma\in\Gamma}
\int_{\mathcal{F}_{\Gamma\cap\mathbb{A}}}
K^{\mathbb{G}}(t,g,\,\gamma *g*\lambda)\overline{\chi_{\bf n}(\lambda)}d\lambda\right)\, dg\\
&=\int_{\mathcal{F}_{\Gamma}}\left(
\sum\limits_{\mu \in\mathbb{M}}
\sum\limits_{\nu\in\Gamma\cap\mathbb{A}}
\int_{\mathcal{F}_{{\Gamma\cap\mathbb{A}}}} 
k^{\mathbb{G}}\big{(}t, g^{-1}* \mu* g *\nu*\lambda\big{)}
\overline{\chi_{\bf n}(\lambda)}d\lambda\right)\, dg\\
&=\int_{\mathcal{F}_{\Gamma}}\left(\sum\limits_{\mu \in\mathbb{M}}\int_{\mathbb{R}^d}k^{\mathbb{G}}\big{(}t,g^{-1}*\mu* g*\lambda\big{)}
\overline{\chi_{\bf n}(\lambda)}d\lambda\right)\, dg\\
&=\sum\limits_{\mu \in\mathbb{M}}
\int_{\mathcal{F}_{\Gamma}}\int_{\mathbb{R}^d}k^{\mathbb{G}}\big{(}t, g^{-1}*\mu * g*\lambda\big{)}
\overline{\chi_{\bf n}(\lambda)}d\lambda\, dg.
\end{align*}
\end{prop}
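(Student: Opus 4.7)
The plan is to work from the definition of the operator trace on the line bundle $E^{(\bf n)}$ and carefully unwind the kernel formulas supplied just before the proposition. First, I would recall that $\mathcal{D}^{(\bf n)}$ is an elliptic operator on a line bundle over the base torus $B:=(\Gamma+\mathbb{A})\backslash\mathbb{G}$, so its heat trace equals the integral of its heat kernel restricted to the diagonal over $B$. Since $k_{\mathcal{D}^{(\bf n)}}(t,[g],[g])$ is invariant under the fiber action of $\mathbb{A}/(\Gamma\cap\mathbb{A})$ (the two $\chi_{\bf n}$-factors at equal base-points cancel), integrating it instead over the fundamental domain $\mathcal{F}_\Gamma$, which fibers over $B$ with fibers of volume $\textup{Vol}(\mathbb{A}/(\Gamma\cap\mathbb{A}))$, yields the identity
\[
\textup{Vol}\bigl(\mathbb{A}/(\Gamma\cap\mathbb{A})\bigr)\cdot \textup{{\bf tr}}\bigl(e^{-t\mathcal{D}^{(\bf n)}}\bigr)=\int_{\mathcal{F}_\Gamma} k_{\mathcal{D}^{(\bf n)}}\bigl(t,[g],[g]\bigr)\,dg.
\]

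Next I would substitute the defining formula
\[
k_{\mathcal{D}^{(\bf n)}}(t,[g],[g])=\int_{\mathcal{F}_{\Gamma\cap\mathbb{A}}} K^{\Gamma\backslash\mathbb{G}}(t,[g],[g]*\lambda)\,\overline{\chi_{\bf n}(\lambda)}\,d\lambda
\]
and then insert the covering-group expression $K^{\Gamma\backslash\mathbb{G}}(t,[g],[h])=\sum_{\gamma\in\Gamma} k^{\mathbb{G}}(t,g^{-1}*\gamma*h)$ from \eqref{heat_kernel_nilmanifold}. This already gives the first displayed equality of the proposition. The second step is to split the sum over $\Gamma$ via the decomposition $\Gamma=\mathbb{M}*(\Gamma\cap\mathbb{A})$, writing a general $\gamma$ as $\mu*\nu$ with $\mu\in\mathbb{M}$ and $\nu\in\Gamma\cap\mathbb{A}$; since $\nu$ lies in the center, $g^{-1}*\mu*\nu*g*\lambda=g^{-1}*\mu*g*\nu*\lambda$, producing the second equality.

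The third step combines the summation over $\nu\in\Gamma\cap\mathbb{A}$ with the integral over $\mathcal{F}_{\Gamma\cap\mathbb{A}}$. Because $\nu\in\Gamma\cap\mathbb{A}$ satisfies $\chi_{\bf n}(\nu)=1$ by definition of the dual lattice, one has $\overline{\chi_{\bf n}(\nu*\lambda)}=\overline{\chi_{\bf n}(\lambda)}$, so changing variables $\lambda\mapsto \nu*\lambda$ inside each summand folds the sum-integral into a single integral over $\mathbb{R}^d\cong\mathbb{A}$; this produces the third equality. Finally, an application of Fubini moves the sum over $\mathbb{M}$ outside of the integral over $\mathcal{F}_\Gamma$, giving the last line.

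The main obstacle I anticipate is purely bookkeeping: making sure the centrality of $\nu$, the $\chi_{\bf n}$-invariance of $\Gamma\cap\mathbb{A}$, and the correct identification of fundamental domains are used in the right places, and verifying absolute convergence to justify Fubini and the rearrangement of the double sum. The convergence is controlled by the rapid Gaussian-type decay of $k^{\mathbb{G}}(t,\cdot)$ away from the identity, which follows from the Beals--Gaveau--Greiner formula of Theorem \ref{heat kernel by BGG1}; once this is in hand, the manipulations are mechanical applications of Fubini and the coset decomposition.
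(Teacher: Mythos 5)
Your proposal is correct and reproduces the paper's own computation step for step: the fiber-volume normalization of the trace over the base torus, substitution of the defining formula for $k_{\mathcal{D}^{(\bf n)}}$ together with the covering sum $K^{\Gamma\backslash\mathbb{G}}(t,[g],[h])=\sum_{\gamma\in\Gamma}k^{\mathbb{G}}(t,g^{-1}*\gamma*h)$, the coset decomposition $\gamma=\mu*\nu$ with $\mu\in\mathbb{M}$ and $\nu\in\Gamma\cap\mathbb{A}$ exploiting centrality of $\nu$, the unfolding of the $\nu$-sum over translates of $\mathcal{F}_{\Gamma\cap\mathbb{A}}$ into a single integral over $\mathbb{R}^d$ using $\chi_{\bf n}(\nu)=1$, and a final Fubini exchange. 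Your supplementary remarks --- the cancellation of the $\chi_{\bf n}$-factors on the diagonal and the absolute convergence supplied by the Gaussian-type decay in the Beals--Gaveau--Greiner formula --- merely make explicit what the paper leaves implicit, so there is nothing to correct.
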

}
Applying Theorem \ref{heat kernel by BGG1} we can give a more concrete expression of the formula in Proposition 
\ref{heat trace for component elliptic operators}.
\allowdisplaybreaks{
For this purpose and for the sake of simplicity,
we assume that the structure constants $c_{i\,j}^k$ in (\ref{GL_definition_structure_constants}) are of the form
\[
\displaystyle{c_{i\,j}^k=\frac{2 q_{i\,j}^k}{p_{0}}}
\] 
with a common positive integer
$p_{0}\geq 1$ and integers $q_{i\,j}^k$. 
Then we fix a lattice $\Gamma$ 
\[
\Gamma:=\left\{\sum\limits_{1\leq i\leq N} m_iX_i
+\sum\limits_{1\leq k\leq d}\frac{\ell_{k}}{p_0}Z_k~\Bigr|~m_i,\,\ell_k\,\in
\mathbb{Z}\,\right\},
\]
and we choose the set $\mathbb{M}=\bigr\{\,\mu
=\sum\limits_{1\leq i\leq N}m_iX_i~|~m_i\in\mathbb{Z}\,\bigr\}$ 
of complete representatives of the quotient group
$(\Gamma\cap\mathbb{A})\backslash\Gamma=\Gamma/(\Gamma\cap\mathbb{A})$. 
For each fixed 
$${\bf n}=p_0\sum\limits\limits_{k=1}^{d} n_kZ_k\in [\Gamma\cap\mathbb{A}]^*,$$where $n_k\in\mathbb{Z}$ we have
\begin{align*}
&{\textup{Vol}\bigr(\mathbb{A}/(\Gamma\cap\mathbb{A})\bigr)}\cdot
\text{{\bf tr}}\left(e^{-t\mathcal{D}^{(\bf n)}}\right)\\
&=\frac{1}{(2\pi t)^{N/2+d}}\int_{\mathcal{F}_{\Gamma}}\sum\limits_{\mu\in\mathbb{M}}
\int_{\mathbb{A}}\int_{\mathbb{R}^d}e^{-\sqrt{-1}\frac{\langle[\mu,x]+\lambda,\tau\rangle}{t}}\cdot 
\varphi_{t}(\tau,\mu)d\tau\,
\overline{\chi_{\bf n}(\lambda)}\,d\lambda\,dx\,dz{\,=\,(*),}
\end{align*}
where the function $\varphi_t(\tau, \mu)$ in the integrand is given by: 
\begin{equation*}
\varphi_t(\tau, \mu)=\exp\Big{\{}  - \frac{1}{2t} \big{\langle} \Omega(\sqrt{-1} \tau) \coth \Omega(\sqrt{-1} \tau) \cdot \mu, \mu \big{\rangle} \Big{\}}
W(\tau). 
\end{equation*}
In the following we write $\hat {\varphi}_t(\tau, \mu)$ for the Fourier transform of $\varphi_t$ with respect to the $\tau$-variable. Then 
\begin{align*}
(*)
&=\frac{1}{t^{N/2+d}\cdot
(2\pi)^{(N+d)/2}}\int_{\mathcal{F}_{\Gamma}}\sum\limits_{\mu\in\mathbb{M}}
\int_{\mathbb{A}}
\hat{\varphi}_{t}\left(\frac{[\mu,x]+\lambda}{t},\mu\right)\cdot
e^{-2\pi\sqrt{-1}\langle {\bf n},\lambda\rangle}\,d\lambda\,dx\, dz\\
&=\frac{1}{t^{N/2+d} \cdot(2\pi)^{(N+d)/2}}
\int_{\mathcal{F}_{\Gamma}}\sum\limits_{\mu\in\mathbb{M}}
\int_{\mathbb{A}}
\hat{\varphi}_{t}\left(u, \mu\right)
\cdot e^{-2\pi\sqrt{-1}\langle {\bf n},\,tu+[x,\,\mu]\rangle}t^d\, du\,dx\, dz, \\
&=\frac{1}{(2\pi t)^{N/2}}\cdot {p_0}^d
\cdot\sum\limits_{\mu\in\mathbb{M}}\varphi_{t}(-2\pi t{\bf n},\mu)\cdot
\int_{{\underbrace{[0,1]\times\cdots\times [0,1]}_{N \textup{ times }}}}\,
e^{-2\pi\sqrt{-1}\langle {\bf n},\,[x,\,\mu]\rangle}\,dx.
\end{align*}
With a suitable set of linear independent vectors $a_{1}({\bf n}),$ $\cdots,a_{b({\bf n})}({\bf n})$ in $\Gamma$ the solution space $\mathbb{M}({\bf  n})=
\{\mu\in\mathbb{M}~|~\Omega({\bf n})(\mu)=0~\}$ can be written as
\[
\mathbb{M}({\bf  n})=\Bigr\{\,\mu=\sum\limits_{i=1}^{b({\bf n})}
m_i\,a_{i}({\bf n}),~\Bigr|~m_i\in\mathbb{Z}\,\Bigr\}. 
\]
Here $b({\bf n})\leq N$ and $b({\bf n})=N$ if and only if ${\bf n}=0$. Hence 
\begin{thm}\label{heat trace: final form}
For each ${\bf n}$ in  the dual lattice $[\Gamma\cap\mathbb{A}]^*$ and with the above notation: 
\begin{align}
{\text{\bf tr}}\,\Big{(}e^{-t\,{\mathcal{D}^{(\bf n)}}}\Big{)}
&=\frac{1}{(2\pi t)^{N/2}}
\sum\limits_{\mu\in\mathbb{M}({\bf n})}
e^{-\frac{\bigr<\Omega(2\pi t\sqrt{-1}{\bf n})\,\coth\Omega(2\pi\sqrt{-1}t{\bf
n}) \mu,\,\mu\bigr>}{2t}}
\sqrt{\det\frac{\Omega(2\pi \sqrt{-1}t{\bf n})}{\sinh \Omega(2\pi\sqrt{-1}t{\bf n})}}\notag\\
&=\frac{1}{(2\pi t)^{N/2}}
\sum\limits_{\mu\in\mathbb{M}({\bf n})} 
e^{-\frac{<\mu,\mu>}{2t}}
\sqrt{\det\frac{\Omega(2\pi \sqrt{-1}t{\bf n})}{\sinh \Omega(2\pi\sqrt{-1}t{\bf n})}}. \label{GL_theorem_trace_formula_D_n}
\end{align}
In particular, it holds: 
\begin{equation*}
\text{{\bf tr}}\left(e^{-\Delta_{\textup{sub}}^{\Gamma\backslash\mathbb{G}}}\right)=\frac{1}{(2\pi t)^{N/2}} 
\sum_{{\bf n} \in [\Gamma \cap \mathbb{A}]^*} \sum\limits_{\mu\in\mathbb{M}({\bf n})} 
e^{-\frac{<\mu,\mu>}{2t}}
\sqrt{\det\frac{\Omega(2\pi \sqrt{-1}t{\bf n})}{\sinh \Omega(2\pi\sqrt{-1}t{\bf n})}}.
\end{equation*}
\end{thm}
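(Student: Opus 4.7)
The strategy is to complete the calculation initiated in the paragraph immediately preceding the theorem, which has reduced $\textup{Vol}(\mathbb{A}/(\Gamma\cap\mathbb{A}))\cdot\text{{\bf tr}}(e^{-t\mathcal{D}^{({\bf n})}})$ to
\[
\frac{p_0^d}{(2\pi t)^{N/2}}\sum_{\mu\in\mathbb{M}}\varphi_t(-2\pi t{\bf n},\mu)\int_{[0,1]^N}e^{-2\pi\sqrt{-1}\langle{\bf n},[x,\mu]\rangle}\,dx.
\]
The plan is to (i) evaluate the $x$-integral and show that it vanishes unless $\mu\in\mathbb{M}({\bf n})$; (ii) simplify $\varphi_t(-2\pi t{\bf n},\mu)$ on this surviving subset; (iii) cancel the $p_0^d$ prefactor against the lattice volume $\textup{Vol}(\mathbb{A}/(\Gamma\cap\mathbb{A}))=p_0^{-d}$; and (iv) sum over ${\bf n}\in[\Gamma\cap\mathbb{A}]^{*}$ through the orthogonal decomposition (\ref{GL_decomposition_of_operator_trace}) to arrive at the total-trace formula.

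For (i), I would expand the phase using (\ref{Omega(z)}): writing $[x,\mu]=\sum_{i,j,k}x_im_jc_{i,j}^kZ_k$ gives $\langle{\bf n},[x,\mu]\rangle=\langle x,\Omega({\bf n})\mu\rangle_{\mathbb{R}^N}$. Since ${\bf n}=p_0\sum n_kZ_k$ with integer $n_k$ and $c_{i,j}^k=2q_{i,j}^k/p_0$ with $q_{i,j}^k\in\mathbb{Z}$, every entry of $\Omega({\bf n})$ is an even integer, hence $\Omega({\bf n})\mu\in(2\mathbb{Z})^N$ for all $\mu\in\mathbb{M}$. The $x$-integral then factors into one-dimensional integrals $\int_0^1 e^{-2\pi\sqrt{-1}kx_j}\,dx_j$ with $k\in 2\mathbb{Z}$, which equal $1$ if $k=0$ and $0$ otherwise. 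Only those $\mu$ with $\Omega({\bf n})\mu=0$, i.e.\ $\mu\in\mathbb{M}({\bf n})$, survive.

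For (ii), on $\mathbb{M}({\bf n})$ the matrix $\Omega(2\pi\sqrt{-1}t{\bf n})=2\pi\sqrt{-1}t\,\Omega({\bf n})$ annihilates $\mu$, and since $x\coth x\to 1$ as $x\to 0$ and is even, the operator $\Omega\coth(\Omega)$ restricted to $\ker\Omega({\bf n})$ acts as the identity. Therefore $\langle\Omega(2\pi\sqrt{-1}t{\bf n})\coth\Omega(2\pi\sqrt{-1}t{\bf n})\mu,\mu\rangle=\langle\mu,\mu\rangle$, which reduces $\varphi_t(-2\pi t{\bf n},\mu)$ to the integrand in (\ref{GL_theorem_trace_formula_D_n}); the volume factor $W$ evaluated at $-2\pi t{\bf n}$ is already in the desired determinantal form. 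Combining with step (iii) yields formula (\ref{GL_theorem_trace_formula_D_n}), and summing over ${\bf n}\in[\Gamma\cap\mathbb{A}]^{*}$ through (iv) gives the stated expression for the total trace.

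I expect the main obstacle to be bookkeeping the various $2\pi$, $t$, and $p_0$ factors through the Fourier-inversion step partially executed in the excerpt, together with rigorously justifying the identity $\Omega\coth(\Omega)\mu=\mu$ on $\ker\Omega({\bf n})$ when $\Omega$ is skew-symmetric with purely imaginary eigenvalues; this is most cleanly handled either by unitary diagonalization of $\sqrt{-1}\,\Omega({\bf n})$ over $\mathbb{C}$ or by expanding $x\coth x$ as a convergent power series and applying it termwise on the generalized null space of $\Omega({\bf n})$.
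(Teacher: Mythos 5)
Your proposal is correct and takes essentially the same route as the paper: the paper's own proof likewise treats the first equality as the outcome of the computation preceding the theorem (your step (i) — the character integral over $[0,1]^N$ vanishing unless $\Omega({\bf n})\mu=0$, which works because the entries of $\Omega({\bf n})$ are integers by the choice $c_{ij}^k=2q_{ij}^k/p_0$ and ${\bf n}=p_0\sum n_kZ_k$), establishes the second equality by exactly your observation that $\Omega({\bf n})\mu=0$ forces $\bigl\langle\Omega(2\pi t\sqrt{-1}{\bf n})\coth\Omega(2\pi\sqrt{-1}t{\bf n})\mu,\mu\bigr\rangle=\langle\mu,\mu\rangle$ (your diagonalization/power-series justification on $\Ker\Omega({\bf n})$ is the right way to make this rigorous, since a real skew-symmetric matrix is normal and its generalized null space equals its null space), and deduces the total-trace statement from the decomposition (\ref{GL_decomposition_of_operator_trace}), as in your step (iv). One bookkeeping caveat on your step (iii): the $z$-integration over $\mathcal{F}_{\Gamma}$ actually produces the factor $p_0^{-d}=\textup{Vol}\bigl(\mathbb{A}/(\Gamma\cap\mathbb{A})\bigr)$ — the paper's printed prefactor $p_0^{d}$ has the wrong sign in the exponent — so the volume on the left-hand side divides out this $p_0^{-d}$ to give the clean formula, whereas cancelling a genuine $p_0^{d}$ against $\textup{Vol}=p_0^{-d}$ as you literally wrote would leave a spurious $p_0^{2d}$.
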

\begin{proof}
It suffices to show the second equation in (\ref{GL_theorem_trace_formula_D_n}). Note that the defining equation $\Omega({\bf n})(\mu)=0$ for $\mu \in \mathbb{M}({\bf n})$ implies: 
\[
\big{\langle} \Omega(2\pi t\sqrt{-1}{\bf n})\,\coth\Omega(2\pi\sqrt{-1}t{\bf n})(\mu),\,\mu\big{\rangle}=\big{\langle}\mu,\,\mu\big{\rangle}
=\sum \limits m_im_j\big{\langle}a_i({\bf  n}),a_j({\bf n})\big{\rangle}.
\]
The last statement follows from (\ref{GL_decomposition_of_operator_trace}) and  (\ref{GL_theorem_trace_formula_D_n}). 
\end{proof}
\begin{cor}
For ${\bf n}, -{\bf n}\in [\Gamma \cap \mathbb{A}]^*$ the traces ${\text{\bf tr}}\,\big{(}e^{-t\,{\mathcal{D}^{(\bf n)}}}\big{)}$
and ${\text{\bf tr}}\,\big{(}e^{-t\,{\mathcal{D}^{(-\bf n)}}}\big{)}$ coincide.
\end{cor}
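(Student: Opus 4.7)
The plan is to read off the corollary directly from the explicit trace formula (\ref{GL_theorem_trace_formula_D_n}) in Theorem \ref{heat trace: final form} by checking that each of the three $\mathbf{n}$-dependent ingredients is invariant under $\mathbf{n} \mapsto -\mathbf{n}$.

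First I would observe that the map $\mathbf{n} \mapsto \Omega(\mathbf{n})$ defined in (\ref{Omega(z)}) is $\mathbb{R}$-linear, since $\Omega(z) = \sum_k z_k (c_{ij}^k)_{i,j}$. In particular $\Omega(-2\pi\sqrt{-1}t\mathbf{n}) = -\Omega(2\pi\sqrt{-1}t\mathbf{n})$. Consequently the kernel of the skew-symmetric matrix $\Omega(\mathbf{n})$ coincides with the kernel of $\Omega(-\mathbf{n})$, so the index sets agree:
\[
\mathbb{M}(-\mathbf{n}) = \{\mu \in \mathbb{M} : \Omega(-\mathbf{n})(\mu) = 0\} = \{\mu \in \mathbb{M} : \Omega(\mathbf{n})(\mu) = 0\} = \mathbb{M}(\mathbf{n}).
\]
The exponential factor $e^{-\langle \mu,\mu\rangle/(2t)}$ in (\ref{GL_theorem_trace_formula_D_n}) is independent of $\mathbf{n}$, so it is unaffected.

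Next I would handle the van Vleck factor $\sqrt{\det \Omega(2\pi\sqrt{-1}t\mathbf{n})/\sinh\Omega(2\pi\sqrt{-1}t\mathbf{n})}$. The key point is that the scalar function $x/\sinh x$ is even. Applied functionally to the skew-symmetric matrix $A := \Omega(2\pi\sqrt{-1}t\mathbf{n})$, this gives
\[
\frac{-A}{\sinh(-A)} = \frac{-A}{-\sinh A} = \frac{A}{\sinh A},
\]
so the matrices $\Omega(2\pi\sqrt{-1}t\mathbf{n})/\sinh\Omega(2\pi\sqrt{-1}t\mathbf{n})$ and $\Omega(-2\pi\sqrt{-1}t\mathbf{n})/\sinh\Omega(-2\pi\sqrt{-1}t\mathbf{n})$ are identical. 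Their determinants agree and the same branch of the square root is selected.

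Combining these three observations, every summand in the expression (\ref{GL_theorem_trace_formula_D_n}) for $\mathbf{tr}(e^{-t\mathcal{D}^{(\mathbf{n})}})$ coincides with the corresponding summand for $\mathbf{tr}(e^{-t\mathcal{D}^{(-\mathbf{n})}})$, yielding the claim. The only subtlety to be careful about is the branch of the square root in the van Vleck factor, but because $x/\sinh x$ is even it never actually forces us to compare two different matrices, which removes the issue entirely.
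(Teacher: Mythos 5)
Your proof is correct and is exactly the argument the paper intends: the corollary is stated as an immediate consequence of the second formula in Theorem \ref{heat trace: final form}, whose three ingredients are invariant under $\mathbf{n}\mapsto-\mathbf{n}$ by linearity of $z\mapsto\Omega(z)$ and evenness of $x/\sinh x$, just as you verify. Your remark on the square-root branch is a nice touch but, as you note yourself, moot, since the two matrices under the determinant are literally identical.
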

\section{Pseudo $H$-type algebras and groups}
\label{Section_Pseudo_H_type_algebras}
For the rest of the paper we consider a specific subclass of all 2-step nilpotent Lie groups, the so called {\it pseudo $H$-type groups}. These are generalizations 
of Heisenberg type groups in \cite{Ka1,Ka2} and have been first introduced in \cite{Ci}. An extensive analysis of the structure and classification of pseudo $H$-type groups and 
their algebras can be found in the recent papers \cite{FM2, FM1,FM}. For completeness we recall the relevant definitions: 
\vspace{1ex}\par 
We write $\mathbb{R}^{r,s}$ for the Euclidean space $\mathbb{R}^{r+s}$ equipped with the non-degenerate scalar product 
\[
\langle x,\,y\rangle_{r,s}:=\sum\limits_{i=1}^rx_iy_i-\sum\limits_{j=1}^{s}x_{r+j}y_{r+j}.
\]
Consider the quadratic form $q_{r,s}(x)= \langle x, x \rangle_{r,s}$ and let $C\ell_{r,s}$ denote the Clifford algebra generated by $(\mathbb{R}^{r,s}, q_{r,s})$.  
We call a  $C\ell_{r,s}$-module $V$ {\it admissible}, if there is a non-degenerate bilinear form ($=$ scalar product) $\langle \bullet,\bullet\rangle_{V}$ on $V$ satisfying the following conditions:
\begin{itemize}
\item[(a)] There is a {\it Clifford module action} $J:C\ell_{r,s}\times V\rightarrow V: (z,X)\mapsto J_{z}X$, i.e. 
\begin{equation}\label{AdMo1}
 J_zJ_{z^{\prime}}+J_{z^{\prime}}J_z=-2 \langle z,z^{\prime}\rangle_{r,s} I \hspace{2ex} \mbox{\it for all} \hspace{2ex} z, z^{\prime} \in \mathbb{R}^{r,s}. 
\end{equation}
\item[(b)] For all $z \in \mathbb{R}^{r,s}$ the map $J_z$ is skew-symmetric on $V$ with respect to $\langle \bullet, \bullet \rangle_V$, i.e.
\begin{equation}\label{AdMo2}
 \big{\langle} J_{z}X,Y\big{\rangle}_V+\big{\langle} X,J_{z}Y \big{\rangle}_{V}=0 \hspace{2ex} \mbox{\it for all} \hspace{2ex} X,Y \in V. 
\end{equation}
\end{itemize}
Moreover,  from (a) and (b) one concludes: 
\begin{equation}\label{third_relation_Clifford_representation}
\big{\langle} J_zX, J_zY \big{\rangle}_V = \langle z,z \rangle_{r,s} \langle X,Y \rangle_V \hspace{2ex} \mbox{\it where} \hspace{2ex} X,Y \in V \hspace{1ex} z \in \mathbb{R}^{r,s}.  
\end{equation}
\par 
We write $\{J,~V,~\langle \bullet,\,\bullet \rangle_V\}$ for an admissible module of the Clifford algebra 
$C\ell_{r,s}$ with the module action $J=J_{z}$ and the scalar product $\langle \bullet,\,\bullet\rangle_V$.
\begin{rem}\label{Remark_admissible_modules_5_cases}
{The existence of an admissible $C\ell_{r,s}$-module $V$ has been shown in \cite{Ci}. If $s\not= 0$ then an admissible module $V$ needs not to be irreducible.  
More precisely, five cases are possible which all are present in the classification. If $C\ell_{r,s}$ has, up to equivalence, only one irreducible representation 
$(J,V)$, then either $V$ or the sum $V \oplus V$ is admissible. In the case where $C\ell_{r,s}$ has two non-equivalent irreducible representations $(J^{(i)}, V_i)$, $i=1,2$, then either 
$V_i$ for $i=1,2$ both are admissible, or only $V_1 \oplus V_2$ is admissible, or $V_1 \oplus V_1$ and $V_2 \oplus V_2$ simultaneously are admissible. These cases are 
complementary to each other (cf. \cite{Ci, FM2,FM1,FM}).
\vspace{1mm}\par 
In the case $s=0$ the situation is simpler.  Every irreducible module $V$ is admissible with respect to an inner product (i.e.  $\langle \bullet,\,\bullet\rangle_{V}$ is positive definite). 
Originally such cases have been defined and studied by A. Kaplan in \cite{Ka1}. }
\end{rem}
}
{In the following, we call a vector $X\in V$
{\it positive} (resp. {\it negative}) 
if the scalar product $\langle X,\,X\rangle_{V}$
is {\it positive} (resp. {\it negative}) and {\it null vector} if $\langle X, \: X \rangle_{V}=0$. A similar notation is used for vectors $Z \in \mathbb{R}^{r,s}$. 
{
If $s>0$, then an admissible module $V$ with scalar product $\langle
\bullet,\,\bullet\rangle_{V}$ 
has positive and negative subspaces of the same dimension $N$ with respect to the above scalar product $\langle \bullet,\,\bullet\rangle_{V}$, cf. \cite{Ci}. In particular, $\dim V=2N$ is even.
\vspace{1mm}\par 
Moreover, $V$ decomposes into the orthogonal sum of {\it minimal dimensional admissible modules}. In fact, since the scalar product restricted to 
such an invariant subspace is non-degenerate the orthogonal complement
is also an admissible module.}}

\begin{defn}\label{definition Pseudo}
Let $\{ J,V, \langle \bullet, \bullet \rangle_V \}$ be an admissible $C\ell_{r,s}$-module. 
\begin{itemize}
\item[(1)] The 
{2-step nilpotent} Lie algebra $V\oplus_{\perp}\mathbb{R}^{r,s}$ {with center $\mathbb{R}^{r,s}$ and} 
Lie brackets defined via the relation 
\begin{align}\label{bracket definition}
\big{\langle} J_z(X),\,Y \big{\rangle}_{V}=\big{\langle} z,[X,Y] \big{\rangle}_{r,s},  \hspace{2ex} z\in\mathbb{R}^{r,s}, ~\text{and}~X,Y\in V,
\end{align}
will be denoted by $\mathcal{N}_{r,s}(V)$. We write $\mathbb{G}_{r,s}(V)$  for 
{the corresponding simply connected Lie group and call it  a {\it pseudo $H$-type group}, cf.~\cite{Ci,FM}.}
\item[(2)]  If $V$ is of minimal dimension among all admissible modules, then we call $V$ {\it minimal admissible} and we shortly write $\mathcal{N}_{r,s}:=\mathcal{N}_{r,s}(V)$ and   
 $\mathbb{G}_{r,s}:= \mathbb{G}_{r,s}(V)$. 
\end{itemize}
\end{defn}
\begin{rem}\label{uniqueness}
Note that minimal admissible modules are cyclic and the nilpotent Lie algebra $\mathcal{N}_{r,s}$ is unique up
to isomorphisms, {even if the Clifford algebra $C\ell_{r,s}$ admits two non-equivalent irreducible modules (cf. \cite{FM}).}
\end{rem}

We fix an orthonormal basis $\{Z_k\}_{k=1}^{r+s}$ in $\mathbb{R}^{r,s}$, i.e. we assume  that: 
\begin{align*}
&\langle Z_i,\,Z_i\rangle_{r,s}=1~(i=1,\cdots,r), ~\langle Z_{r+j},\,Z_{r+j}\rangle_{r,s}=-1~(j=1,\cdots,s),~\text{and}\\
&\langle Z_i,\,Z_j\rangle_{r,s}=0~(i\not= j). 
\end{align*}
Let $\{J,~V, ~ \langle \bullet,\bullet \rangle_V\}$ be an admissible $C\ell_{r,s}$-module. 
\begin{thm}[cf. \cite{CrDo,FM}] \label{integral basis I}
Assume that $s>0$. Then there exists an orthonormal basis $\{X_i\}_{i=1}^{2N}$ of $V$
such that
\begin{align*}
&\text{{\em (1)}}\quad \langle X_i,X_i\rangle_{V}=1~(i=1\cdots,N),~\langle X_i,X_i\rangle_{V}=-1~(i=N+1,\cdots,2N)~\text{and}\\
&\quad\quad  \langle X_i,X_j\rangle_V=0~\text{for}~i\not=j,\\
&\text{{\em (2)}}\quad \text{For each $k$, the operator $J_{Z_k}$ maps $X_i$ to some $X_j$ or $-X_j$ with $j\not=i$}.
\end{align*}
\end{thm}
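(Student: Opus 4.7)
The plan is to build the basis by choosing a unit positive vector and propagating it through a ``Clifford orbit''. First I observe that the orthonormal basis $\{Z_k\}$ of $\mathbb{R}^{r,s}$ makes each $J_{Z_k}$ satisfy $J_{Z_k}^2 = -\langle Z_k, Z_k\rangle_{r,s} I$, so $J_{Z_k}^2 = -I$ for $k \leq r$ and $J_{Z_k}^2 = +I$ for $k > r$, and the Clifford relation (\ref{AdMo1}) forces $J_{Z_i} J_{Z_j} = -J_{Z_j} J_{Z_i}$ for $i \neq j$. Consequently, for any multi-index $I = \{i_1 < i_2 < \cdots < i_m\} \subseteq \{1,\ldots,r+s\}$, the product $J_I := J_{Z_{i_1}} J_{Z_{i_2}} \cdots J_{Z_{i_m}}$ is well defined up to sign, and the collection $\{J_I\}$ has $2^{r+s}$ elements modulo $\pm 1$.

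Next, since the positive subspace of $V$ is $N$-dimensional I pick a unit positive vector $X_1$ with $\langle X_1, X_1\rangle_V = 1$ and look at the orbit $\mathcal{O}(X_1) := \{J_I X_1\}_I$. Two key properties need to be verified. First, orthogonality: for $I \neq I'$ the vector $J_I^{*} J_{I'} X_1$ is (up to sign) of the form $J_{I \triangle I'} X_1$ with $I \triangle I'$ nonempty, and since each $J_z$ is skew-symmetric by (\ref{AdMo2}), an odd-length product is skew and an even-length nontrivial product is also skew against $X_1$ by a parity-of-anticommutators argument, so $\langle X_1, J_{I \triangle I'} X_1 \rangle_V = 0$. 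Second, normalization: iterating (\ref{third_relation_Clifford_representation}) gives $\langle J_I X_1, J_I X_1\rangle_V = \prod_{i \in I} \langle Z_i, Z_i\rangle_{r,s}$, which is $\pm 1$ according to the parity of $|I \cap \{r+1,\ldots,r+s\}|$. Hence after choosing a sign for each $J_I$ the orbit yields an orthonormal set (in the indefinite sense) of $2^{r+s}$ vectors spanning an invariant subspace $W \subseteq V$.

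For condition (2) I note that $J_{Z_k}$ sends $J_I X_1$ to $\pm J_{I \triangle \{k\}} X_1$, which is a different orbit element since $I \triangle \{k\} \neq I$; so the map permutes orbit vectors up to sign with no fixed points. To pair up positive and negative norm vectors in equal numbers, I exploit the hypothesis $s > 0$: fix any $Z_{r+j}$, then $J_{Z_{r+j}}$ involutively swaps positive and negative orbit vectors by (\ref{third_relation_Clifford_representation}), giving exactly $2^{r+s-1}$ of each sign inside $W$. Finally, $W^\perp$ is $J$-invariant because each $J_z$ is skew-symmetric with respect to $\langle \bullet, \bullet\rangle_V$; moreover the restriction of the scalar product to $W^\perp$ is non-degenerate (since it is on $W$), so $(W^\perp, J|_{W^\perp})$ is again an admissible module. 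Induction on $\dim V$ (valid since $V$ decomposes into minimal admissible summands) produces bases of the same type on each summand, which concatenated and reordered (positive vectors first) yield the desired basis.

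The main obstacle I anticipate is verifying the orthogonality of orbit vectors cleanly: the bookkeeping of signs coming from the anticommutation relations, combined with the skew-symmetry of arbitrary products of $J_{Z_k}$'s against the seed vector $X_1$, requires a careful parity argument. Once this is in place, the normalization and the induction step follow straightforwardly from (\ref{third_relation_Clifford_representation}) and the skew-symmetry (\ref{AdMo2}).
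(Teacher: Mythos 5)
Your construction founders at the orthogonality step, and the failure is not mere bookkeeping: the parity argument is wrong as stated. For pairwise anticommuting operators that are skew-symmetric with respect to $\langle \bullet,\bullet\rangle_V$, the product $J_I=J_{Z_{i_1}}\cdots J_{Z_{i_m}}$ satisfies $J_I^{*}=(-1)^{m(m+1)/2}J_I$, so $J_I$ is skew only for $m\equiv 1,2 \pmod 4$ and is \emph{symmetric} for $m\equiv 0,3\pmod 4$. Hence when $|I\triangle I'|\equiv 0,3\pmod 4$ nothing forces $\langle X_1, J_{I\triangle I'}X_1\rangle_V=0$ for an arbitrarily chosen unit positive seed $X_1$: already a product of three generators is (up to sign) a symmetric involution, and its pairing against a generic $X_1$ is nonzero. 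Worse, your claim that the orbit yields $2^{r+s}$ orthonormal vectors is dimensionally impossible in exactly the cases this paper uses: for $(r,s)=(3,1)$ the minimal admissible module has $\dim V=8<2^{4}=16$, so relations $J_I X_1=\pm X_1$ with $I\neq\emptyset$ are unavoidable and the orbit collapses. (Note the paper itself gives no proof of this theorem; it cites \cite{CrDo,FM}.)

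The known repair, which is the mechanism of \cite{CrDo,FM} and is sketched in the remark following Definition \ref{integral basis}, is to choose the seed not arbitrarily but as a common eigenvector of a maximal commutative subgroup $\mathbb{S}$ of symmetric, positivity-preserving involutions $J_I\neq\pm\operatorname{Id}$ inside the finite group generated by the $J_{Z_k}$. Then every problematic symmetric product $J_I$ either lies in $\pm\mathbb{S}$, in which case $J_IX_1=\pm X_1$ and the collapse of the orbit is consistent (shrinking it to exactly $\dim V$ vectors up to sign), or it anticommutes with some $A\in\mathbb{S}$, in which case $A(J_IX_1)=-\varepsilon J_IX_1$ while $AX_1=\varepsilon X_1$, and orthogonality of eigenspaces of the symmetric involution $A$ forces $\langle X_1,J_IX_1\rangle_V=0$. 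Your remaining steps are sound once the seed is chosen this way: the normalization via iterated use of (\ref{third_relation_Clifford_representation}), the observation that $J_{Z_k}$ has no fixed orbit vectors up to sign (a fixed vector would be null, or would contradict $J_{Z_k}^2=-I$), the count of positive and negative vectors via $J_{Z_{r+j}}$ with $s>0$, and the induction over the orthogonal complement using skew-symmetry (\ref{AdMo2}) and non-degeneracy. But as written, the central orthogonality claim is false, and with it the assertion that $W$ has dimension $2^{r+s}$.
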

\begin{defn}\label{integral basis}
We call a basis $\{X_i,Z_j\}$ satisfying the properties 
in Theorem \ref{integral basis I} an {\it integral basis} of the algebra $\mathcal{N}_{r,s}(V)$. 
\end{defn}
{\begin{rem}
An interesting problem, which we will postpone to a future work, consists in a classification of {\it integral bases} up to isomorphisms. Consider an orthonormal basis $\{Z_k\}_{k=1}^{r+s}$ of $\mathbb{R}^{r,s}$ in the above sense. 
If $V$ is a minimal admissible $\textup{C$\ell$}_{r,s}$-module, then we can define a finite subgroup $\mathbb{G}$ in $\textup{GL}(V)$ generated by 
$\{J_{Z_k} \: : \: k =1, \ldots, r+s\}$. Consider the commutative subgroup: 
\begin{equation*}
\mathbb{S}: =\big{\{} A \in \mathbb{G} \: : \: A^2= \textup{Id},\; A= J_{Z_{i_1}}\ldots J_{Z_{i_r}} >0, A \ne - \textup{Id} \big{\}} \subset \mathbb{G}. 
\end{equation*}
By "$A>0$" we mean that $A$ maps positive (resp. negative) vectors in $V$ to positive (resp. negative) vectors. 
Such groups are partially ordered with respect to the inclusion and we assume that $\mathbb{S}$ is a maximal element. Further, we assume that $v \in V$ is a common eigenvector 
of elements in $\mathbb{S}$. Necessarily, $v$ is not a null vector, 
i.e. $\langle v,v \rangle_{V} \ne 0$. Consider 
\begin{equation*}
\big{\{} \pm X_i \} = \big{\{} Av\: : \: A \in \mathbb{G} \big{\}}. 
\end{equation*}
We conjecture that a suitable choice of the common eigenvector $v$ leads to an integral basis $\{ X_i, Z_{\ell} \}$ of the pseudo $H$-type Lie algebra $\mathcal{N}_{r,s}(V)$. 
\vspace{1mm}\par
Conversely, let $\{ X_i, Z_j\}$ be an integral basis and put $\pm \mathcal{B}:=\{\pm  X_{\ell} \: : \: \ell = 1, \ldots , m= \dim V \}$. Then each $J_{Z_i}$ defines a bijective map 
\begin{equation*}
J_{Z_i}: \pm \mathcal{B} \rightarrow \pm \mathcal{B} 
\end{equation*}
and elements in the group $\mathbb{G}$ act on $\pm \mathcal{B}$. We obtain a subgroup $\mathbb{S}$ as above from this basis by  defining
\begin{equation*}
\big{\{} A \in \mathbb{G} \: : \: A(X_1)=X_1 \big{\}} =: \mathbb{S} \subset \mathbb{G}. 
\end{equation*}
\par 
We conjecture that every maximal subgroup $\mathbb{S}$ defines an integral basis. A classification of integral bases up to isomorphisms 
is left as an interesting problem, which we postpone to a future study. 
%
%
\end{rem}
}
From now on we assume that $\{X_i, Z_k\}$ is an integral basis of $\mathcal{N}_{r,s}(V)$. 
\begin{cor}\label{Corollary_mapping_properties_representation_integral_basis}
If there exists $i\in \{ 1, \cdots, 2N\}$ such that 
$J_{Z_k}(X_i)=\pm J_{Z_{\ell}}(X_i)$, then $k=\ell$. Hence any basis vector $X_i$ is mapped to some $X_j$ or $-X_j$ by at most one operator $J_{Z_k}$. 
\end{cor}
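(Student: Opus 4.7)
The plan is to derive a contradiction from assuming $k\neq\ell$ with $J_{Z_k}(X_i)=\epsilon J_{Z_\ell}(X_i)$, $\epsilon\in\{\pm1\}$, by extracting two incompatible relations between $\langle Z_k,Z_k\rangle_{r,s}$ and $\langle Z_\ell,Z_\ell\rangle_{r,s}$.

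First I would exploit the Clifford relation (\ref{AdMo1}). Since the basis $\{Z_j\}$ is orthogonal, $k\neq\ell$ forces $\langle Z_k,Z_\ell\rangle_{r,s}=0$, so $J_{Z_k}$ and $J_{Z_\ell}$ anticommute, and setting $z=z'$ in (\ref{AdMo1}) yields $J_{Z_j}^{\,2}=-\langle Z_j,Z_j\rangle_{r,s} I$. From $(J_{Z_k}-\epsilon J_{Z_\ell})(X_i)=0$ I would apply $J_{Z_k}$ to obtain
\[
-\langle Z_k,Z_k\rangle_{r,s}X_i=\epsilon J_{Z_k}J_{Z_\ell}(X_i),
\]
and apply $J_{Z_\ell}$ (using anticommutation) to obtain
\[
J_{Z_k}J_{Z_\ell}(X_i)=\epsilon\langle Z_\ell,Z_\ell\rangle_{r,s}X_i.
\]
Combining these, since $\epsilon^2=1$, gives the first relation
\[
\langle Z_k,Z_k\rangle_{r,s}+\langle Z_\ell,Z_\ell\rangle_{r,s}=0.
\]

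Next I would bring in relation (\ref{third_relation_Clifford_representation}). From $J_{Z_k}(X_i)=\epsilon J_{Z_\ell}(X_i)$ one has $\langle J_{Z_k}(X_i),J_{Z_k}(X_i)\rangle_V=\langle J_{Z_\ell}(X_i),J_{Z_\ell}(X_i)\rangle_V$, and applying (\ref{third_relation_Clifford_representation}) to each side yields
\[
\langle Z_k,Z_k\rangle_{r,s}\langle X_i,X_i\rangle_V=\langle Z_\ell,Z_\ell\rangle_{r,s}\langle X_i,X_i\rangle_V.
\]
Because $\{X_i,Z_j\}$ is an integral basis, condition (1) of Theorem \ref{integral basis I} gives $\langle X_i,X_i\rangle_V=\pm1\neq 0$, so the second relation
\[
\langle Z_k,Z_k\rangle_{r,s}=\langle Z_\ell,Z_\ell\rangle_{r,s}
\]
holds. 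Together with the first relation this forces $\langle Z_k,Z_k\rangle_{r,s}=0$, contradicting orthonormality of $\{Z_j\}$ (which gives $\pm1$). Hence $k=\ell$.

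The second assertion is an immediate reformulation: if some $J_{Z_k}(X_i)=\pm X_j$ and also $J_{Z_\ell}(X_i)=\pm X_j$ for the same $j$, then $J_{Z_k}(X_i)=\pm J_{Z_\ell}(X_i)$, and the first part forces $k=\ell$. The only point requiring care is verifying that the anticommutation and the quadratic identity are applied consistently with the sign conventions; this is routine and the use of $\epsilon^2=1$ eliminates the sign ambiguity cleanly, so I do not expect a real obstacle—the argument is a direct three-line computation once the integral-basis hypothesis $\langle X_i,X_i\rangle_V\neq 0$ is invoked.
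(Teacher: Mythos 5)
Your proof is correct and follows essentially the same route as the paper: your second relation $\langle Z_k,Z_k\rangle_{r,s}=\langle Z_\ell,Z_\ell\rangle_{r,s}$, derived from (\ref{third_relation_Clifford_representation}) together with $\langle X_i,X_i\rangle_V=\pm1\neq 0$, is exactly the paper's opening observation that $J_{Z_k}$ and $J_{Z_\ell}$ must both preserve or both reverse the sign type of vectors (forcing $k,\ell\leq r$ or $k,\ell>r$), while your first relation is the same computation as the paper's, which shows that $X_i$ would be a real $\pm1$-eigenvector of $J_{Z_k}J_{Z_\ell}$ even though $\bigl(J_{Z_k}J_{Z_\ell}\bigr)^2=-\langle Z_k,Z_k\rangle_{r,s}\langle Z_\ell,Z_\ell\rangle_{r,s}I=-I$. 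You have merely reorganized the sign bookkeeping into two incompatible scalar identities instead of invoking the non-existence of real eigenvalues of a complex structure; the ingredients and logical content coincide.
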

\begin{proof}
If $k \leq r$ then $J_{Z_k}$ maps positive to positive and negative to negative elements. Similarly, if $k>r$, then $J_{Z_k}$ maps
positive to negative and negative to positive elements. Therefore, under the above assumption only the cases $k,\ell\leq r$ or $k,\ell>r$ are possible.
\vspace{1mm}\par 
Let us  assume $k\not=\ell$ such that  $\pm X_i=J_{Z_k}J_{Z_{\ell}}(X_i)$. By the previous remark we have 
$$J_{Z_k}J_{Z_{\ell}}\circ J_{Z_k}J_{Z_{\ell}}=- J_{Z_k}\circ J_{Z_{\ell}}^2 \circ J_{Z_k}=-\big{\langle} Z_k,Z_k\big{\rangle}_{r,s} \big{\langle} Z_{\ell},Z_{\ell} \big{\rangle}_{r,s}=-I.$$
This equation contradicts the existence of the eigenvalue $1$ or $-1$ of $J_{Z_k}J_{Z_{\ell}}$.
\end{proof}
\begin{cor}\label{integral basis II}
If we put $[X_i,\,X_j]=\sum c_{i\,j}^kZ_k$, then $c_{i\,j}^{k}$ can be non-zero for at most one $k$. If $c_{i\,j}^{k}$ is non-zero then it equals $\pm 1$.  
\end{cor}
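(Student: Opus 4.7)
The plan is to derive both claims directly from the defining relation (\ref{bracket definition}) of the brackets together with the two properties of an integral basis in Theorem \ref{integral basis I} and Corollary \ref{Corollary_mapping_properties_representation_integral_basis}.

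First I would invert the bracket-defining identity. Pairing (\ref{bracket definition}) with the basis element $Z_k$ and using that $\{Z_k\}_{k=1}^{r+s}$ is orthonormal for $\langle \bullet,\bullet \rangle_{r,s}$ with signs $\varepsilon_k = \langle Z_k,Z_k\rangle_{r,s} \in \{\pm 1\}$, the expansion $[X_i,X_j] = \sum_\ell c_{ij}^\ell Z_\ell$ yields
\begin{equation*}
\langle J_{Z_k}(X_i), X_j \rangle_V \;=\; \langle Z_k, [X_i,X_j] \rangle_{r,s} \;=\; \varepsilon_k\, c_{ij}^k,
\end{equation*}
so that $c_{ij}^k = \varepsilon_k \langle J_{Z_k}(X_i), X_j \rangle_V$. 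This reduces the problem to analyzing this single inner product.

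Next, I invoke property (2) of the integral basis: for each $k$ and each index $i$ there is an index $j(k,i) \neq i$ and a sign $\sigma(k,i) \in \{\pm 1\}$ with $J_{Z_k}(X_i) = \sigma(k,i)\, X_{j(k,i)}$. Since the $\{X_\ell\}$ are orthogonal and satisfy $\langle X_\ell, X_\ell \rangle_V = \pm 1$, the inner product $\langle J_{Z_k}(X_i), X_j \rangle_V$ vanishes unless $j = j(k,i)$, and in that case equals $\sigma(k,i)\langle X_j,X_j\rangle_V \in \{\pm 1\}$. Combining with the previous identity, $c_{ij}^k \in \{0,\pm 1\}$, and in particular $c_{ij}^k = \pm 1$ whenever it is nonzero.

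Finally, for the uniqueness of the index $k$, suppose for contradiction that $c_{ij}^k \neq 0$ and $c_{ij}^\ell \neq 0$ for some $k \neq \ell$. By the analysis above, both $J_{Z_k}(X_i)$ and $J_{Z_\ell}(X_i)$ are equal to $\pm X_j$, hence $J_{Z_k}(X_i) = \pm J_{Z_\ell}(X_i)$. Corollary \ref{Corollary_mapping_properties_representation_integral_basis} then forces $k=\ell$, a contradiction. Therefore at most one of the $c_{ij}^k$ is nonzero, completing the proof.

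I do not expect a serious obstacle here: the statement is essentially a bookkeeping consequence of the bracket formula combined with the integrality properties of the chosen basis, and the only nontrivial ingredient --- the uniqueness of the operator $J_{Z_k}$ sending $X_i$ into $\pm X_j$ --- has already been established in Corollary \ref{Corollary_mapping_properties_representation_integral_basis}.
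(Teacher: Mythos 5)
Your proof is correct and follows essentially the same route as the paper: the paper likewise derives $\big\langle J_{Z_\ell}X_i,\,X_j\big\rangle_V=\pm c_{i\,j}^{\ell}$ from the bracket-defining relation and then combines property (2) of the integral basis with Corollary \ref{Corollary_mapping_properties_representation_integral_basis} to get both the value $\pm 1$ and the uniqueness of $k$. You have merely spelled out the sign bookkeeping (your $\varepsilon_k$ is the paper's case split $\ell\leq r$ versus $\ell>r$) that the paper leaves implicit.
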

\begin{proof}
The statement follows from Corollary \ref {Corollary_mapping_properties_representation_integral_basis} and
\[
\big{\langle} J_{Z_\ell} X_i,\,X_j\big{\rangle}_{V}=\big{\langle} Z_\ell,\,[X_i,\,X_j]\big{\rangle}_{r,s}=
\begin{cases}
c_{i\,j}^{\ell} & \text{if } \ell \leq r\\
- c_{i\,j}^{\ell} & \text{if } \ell > r. 
\end{cases}
\]
\end{proof}
\begin{defn}\label{integral lattice}
From an integral basis $\{X_i,Z_k\}$ of $\mathcal{N}_{r,s}(V)$ we define a {\it lattice} in the pseudo $H$-type group $\mathbb{G}_{r,s}(V)$ by
\[
\Gamma_{r,s}(V):=\left\{\sum\limits_{m_i\in\mathbb{Z}}\,m_iX_i
+\frac{1}{2}\sum\limits_{k_j\in\mathbb{Z}} \,k_jZ_j\right\}. 
\]
In the following we call $\Gamma_{r,s}(V)$ a {\it standard integral lattice} in $\mathcal{N}_{r,s}(V)$. If  $\mathcal{N}_{r,s}$ is constructed from 
a minimal admissible module $V$ (cf. Definition \ref{definition Pseudo}), then we write $\Gamma_{r,s}:=\Gamma_{r,s}(V)$. 
\end{defn}
\begin{rem}
A  {\it standard integral lattice} is not unique. A complete classification will be subject of another work. 
For particular cases the construction of $\Gamma_{r,s}$ is found in \cite{FM}. 
\end{rem}
\par
In the following two sections we consider the sub-Laplacian 
\begin{equation}\label{GL_sub_Laplacian_last_two_sections}
\Delta_{\textup{sub}}^{\mathbb{G}_{r,s}(V)}=-\frac{1}{2}\sum_{i=1}^{2N} {\tilde{X}_{i}}^2
\end{equation}
on $\mathbb{G}_{r,s}(V)$, where $\{X_i\: : \: i=1, \cdots, 2N\}$ is the basis of the module $V$ in the definition of the standard integral lattice $\Gamma_{r,s}(V)$. 
We determine the heat trace of the sub-Laplacian 
\begin{equation}\label{sub_Laplacian_nilmanifold_definitionand notation}
\Delta_{\textup{sub}}^{\Gamma_{r,s}(V)\backslash\mathbb{G}_{r,s}(V)}
\end{equation}
descended from (\ref{GL_sub_Laplacian_last_two_sections}) to the nilmanifold $\Gamma_{r,s}(V)\backslash \mathbb{G}_{r,s}(V)$. Based on the sub-ellipticity of  
(\ref{sub_Laplacian_nilmanifold_definitionand notation}) it is known that the spectrum of  the sub-Laplacian only consists of eigenvalues with finite multiplicities. 
In principle our trace formula in Theorem \ref{heat trace: final form} can be used to obtain the spectrum of (\ref{sub_Laplacian_nilmanifold_definitionand notation}). 
However, we will not calculate the eigenvalues and multiplicities explicitly since a comparison of heat traces is sufficient to decide isospectrality. 
\section{The structure constants of pseudo $H$-type groups}
\label{Section_pseudo_H_type_algebra}
In the case of pseudo $H$-type groups we calculate the characteristic polynomial of the matrix $\Omega(z)$ in (\ref{Omega(z)}) in the case where $s>0$ in Definition 
\ref{definition Pseudo} of the pseudo $H$-type group $\mathbb{G}_{r,s}(V)$. Recall that this matrix is an essential ingredient for the integral expression of the heat kernel in 
Theorem \ref{heat kernel by BGG1}.
\vspace{1mm}\par 
Throughout this section we assume that $s>0$ so that we can use the integral basis in Theorem \ref{integral basis I}. Let us start by decomposing the Clifford module 
$V=V_+ \oplus_{\perp} V_-$ into a positive and a negative subspace $V_+:=[X_i \: : \: i=1, \cdots, N]$ and $V_-=[X_{N+j}\: : \: j=1, \cdots, N]$, where $\{X_i \: : \: i=1, \cdots, 2N \}$ 
is part of a standard integral basis $\{ X_i, Z_j\}$  of $\mathcal{N}_{r,s}(V)$. Then the Clifford module action 
\begin{equation*}
J_{z}(X_i)=\sum_j c_{i\,j}(z)X_{j}
\end{equation*}
with 
\[
z=\sum\limits_{i=1}^r \,\mu_iZ_i+\sum\limits_{j=1}^s\nu_{j}Z_{r+j}\cong (\mu,\nu)^T \in \mathbb{R}^{r,s}
\]
can be written in form of a matrix with respect to the basis $\{X_i\}$ of $V$:
\[
J_{z}=\begin{pmatrix}A&B\\C&D\end{pmatrix}: 
\begin{array}{l}
V_+ \\
\oplus_{\perp}\\
V_-
\end{array}
\longrightarrow 
\begin{array}{l}
V_+ \\
\oplus_{\perp}\\
V_-
\end{array}.
\]
\par
By (\ref{third_relation_Clifford_representation}) the map $J_z$ leaves $V_{\pm}$ invariant whenever $z$ is positive in $\mathbb{R}^{r,s}$. 
If $z\in \mathbb{R}^{r,s}$ is negative then $J_z$ maps $V_+$ to $V_-$ and vice versa. It follows that the component matrices $A, B, C,D$ are 
of the forms $A=A(\mu)$, $B=B(\nu)$, $C=C(\nu)$ and $D=D(\mu)$. Due to the admissibility condition (\ref{AdMo2}) of the Clifford action on the module $V$ we have 
$${A^T(\mu)}=-A(\mu),\; \;  {B^T(\nu)}=C(\nu)\; \textup{ \it  and }\;  {D^T(\mu)}=-D(\mu).$$ 
\par 
Here ${A^T(\mu)}$ denotes the transposed matrix of $A(\mu)\in \mathbb{R}(N)$.  
Moreover, the identity $J_{z}^2=-\langle z,z\rangle_{r,s}$ yields additional relations between the component matrices $A,B, C$ and $D$ which are collected in the next lemma. 
\begin{lem}\label{relations}
With the notion $\| \mu\|^2:= \sum_{i=1}^r\mu_i^2$ and $\| \nu\|^2:= \sum_{j=1}^s \nu_j^2$ we have: 
\begin{itemize}
\item[(a)] $A(\mu)^2+B(\nu)C(\nu)=-\langle z,z\rangle_{r,s}=-(\Vert\mu\Vert^2-\Vert\nu\Vert^2)$,
\item[(b)] $A(\mu)B(\nu)+B(\nu)D(\mu)=0$ and $C(\nu)A(\mu)+D(\mu)C(\nu)=0$, 
\item[({c})] $C(\nu)B(\nu)+D(\mu)^2=-\langle z,z\rangle_{r,s}$.
\end{itemize}
In particular, it follows $A(\mu)^2=-\Vert\mu\Vert^2$ and $D(\mu)^2=-\Vert\mu\Vert^2$.
\end{lem}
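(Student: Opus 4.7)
The plan is to exploit the defining Clifford relation (\ref{AdMo1}) specialised to $z=z'$, which reads $J_z^2=-\langle z,z\rangle_{r,s}\cdot I$, and to compute $J_z^2$ block-wise using the $2\times 2$ block decomposition of $J_z$ induced by the splitting $V=V_+\oplus_{\perp}V_-$.

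More precisely, write $z=(\mu,\nu)^T\in\mathbb{R}^{r,s}$ and use
\[
J_z=\begin{pmatrix} A(\mu) & B(\nu) \\ C(\nu) & D(\mu)\end{pmatrix},
\]
so that a direct matrix multiplication yields
\[
J_z^2=\begin{pmatrix} A(\mu)^2+B(\nu)C(\nu) & A(\mu)B(\nu)+B(\nu)D(\mu) \\ C(\nu)A(\mu)+D(\mu)C(\nu) & C(\nu)B(\nu)+D(\mu)^2 \end{pmatrix}.
\]
On the other hand, $\langle z,z\rangle_{r,s}=\|\mu\|^2-\|\nu\|^2$, hence $J_z^2=-(\|\mu\|^2-\|\nu\|^2)\cdot I$ is block-diagonal with both diagonal blocks equal to $-(\|\mu\|^2-\|\nu\|^2)\cdot I_N$.

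Comparing the two expressions entry-by-entry, the two diagonal blocks immediately give (a) and (c), while the two off-diagonal blocks give the two identities in (b). Finally, to extract $A(\mu)^2=-\|\mu\|^2 I$ and $D(\mu)^2=-\|\mu\|^2 I$, I specialise (a) and (c) to $\nu=0$ (which is permitted since the relations hold for every $z\in\mathbb{R}^{r,s}$); then $B(0)=C(0)=0$ by linearity of $z\mapsto J_z$, so the cross-terms vanish and the claims follow.

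There is essentially no obstacle here: once one observes that the Clifford identity $J_z^2=-\langle z,z\rangle_{r,s}\cdot I$ is \emph{globally} enforced on the full block matrix, the lemma reduces to matching four $N\times N$ blocks. The only point requiring a line of justification is that the entries $A,D$ depend solely on $\mu$ and $B,C$ solely on $\nu$; this was already established before the statement of the lemma by invoking (\ref{third_relation_Clifford_representation}), which forces positive (resp.\ negative) central elements to preserve (resp.\ swap) $V_+$ and $V_-$, together with linearity of $J$.
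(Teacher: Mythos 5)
Your proof is correct and follows exactly the route the paper intends: the lemma is stated there as an immediate consequence of squaring the block decomposition of $J_z$ and comparing with $J_z^2=-\langle z,z\rangle_{r,s}\,I$, which is precisely your computation. Your extraction of $A(\mu)^2=-\Vert\mu\Vert^2$ and $D(\mu)^2=-\Vert\mu\Vert^2$ by specialising to $\nu=0$ (using linearity of $z\mapsto J_z$, so $B(0)=C(0)=0$) is a valid and natural way to make explicit the step the paper leaves implicit.
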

Let $L$ be a linear map on $V\cong \mathbb{R}^{2N}$. The same notation is used for its matrix representation with respect to the basis $\{X_i \}$. Let $\langle \cdot , \cdot \rangle$ denote the Euclidean 
inner product on $\mathbb{R}^{2N}$ and fix $x, y \in V$. We calculate the matrix representation of the transpose $L^*$ of $L$ with respect to the scalar product $\langle \cdot , \cdot \rangle_V$. Consider 
the matrix
\begin{equation*}
\tau:=  
\left(\begin{array}{cc}
I & 0 \\
0 & -I 
\end{array}\right) \in \mathbb{R}(2N) \hspace{2ex} \mbox{\it where} \hspace{2ex} I = \textup{identity} \in \mathbb{R}(N). 
\end{equation*}
Then we have 
\begin{equation*}
\big{\langle} Lx,y \big{\rangle}_V=\big{\langle} Lx, \tau y \big{\rangle}= \big{\langle} x,L^T\tau y \big{\rangle}=\big{\langle} x, \tau L^T \tau y \big{\rangle}_V, 
\end{equation*}
which implies that $L^*= \tau L^T \tau$. A direct calculation shows that the skew-symmetric matrix $\Omega(z)$ in (\ref{Omega(z)}) is related to the above matrix representation of $J_z$ as follows: 
\begin{equation}\label{relation_Omega_z_J_z}
\Omega(z)= \tau J_z^T=\left(
\begin{array}{cc}
-A(\mu) & B(\nu) \\
-B^T(\nu) & D(\mu)
\end{array}
\right). 
\end{equation}
\begin{rem} If $r=0$, then $A(\mu)=D(\mu)=0$.
\end{rem}
In order to determine the eigenvalues of the matrix $\Omega(\sqrt{-1}z)$ we employ the relation
\begin{align*}
\underbrace{\begin{pmatrix}-A+\lambda&B \\-C&D+\lambda\end{pmatrix}}_{=\Omega({z})+\lambda }
\begin{pmatrix} I&-(-A+\lambda)^{-1}B\\0& I\end{pmatrix}
&=\begin{pmatrix}-A+\lambda&0\\-{B^T}&{B^T}(-A+\lambda)^{-1}B+\lambda+ D\end{pmatrix}.
\end{align*}
Hence we have
\[
\det \bigr(\Omega(z)+\lambda\bigr) =\det \bigr(-A+\lambda\bigr)\: 
\det\Big{(}{B^T}(-A+\lambda)^{-1}B+\lambda+D\Big{)}.
\]
According to Lemma \ref{relations} one has $B^TB=\Vert\nu\Vert^2=B B^T$ and $B^TAB+\Vert\nu\Vert^2 D=0$, 
showing that $B^T\bigr(-A+\lambda\bigr)B=\Vert\nu\Vert^2 (\lambda + D)$. Together with the  skew-symmetry of $\Omega(z)$: 
\begin{align*}
&\det\bigr(\Omega(z)+\lambda\bigr)=\det\bigr(\Omega(z)-\lambda\bigr)
\\
&=\det \bigr(-A+\lambda\bigr)\: 
\det\Bigr(B^T(-A+\lambda)^{-1}B+\frac{1}{\Vert\nu\Vert^2}\: {B}^T\left(-A+\lambda\right){B}\Bigr)\\
&=\det \bigr(-A+\lambda\bigr)\: 
\det\left(B^T\Bigr[(-A+\lambda)^{-1}+\frac{1}{\Vert\nu\Vert^2}(-A+\lambda)\Bigr]B\right)\\
&=\det\Bigr(\Vert\nu\Vert^2-\Vert\mu\Vert^2+\lambda^2-2\lambda A\Bigr).
\end{align*}
Therefore:
\begin{align*}
&\det\bigr(\Omega(z)+\lambda\bigr)^2=\det\bigr(\Omega(z)+\lambda\bigr)\: \det\bigr(\Omega(z)-\lambda\bigr)\\
&=\det\Bigr(\left(\Vert\nu\Vert^2-\Vert\mu\Vert^2+\lambda^2\right)^2+4\lambda^2\Vert\mu\Vert^2\Bigr).
\end{align*}
\begin{prop}\label{characteristic polynomial} 
With $s>0$ and $z \in \mathbb{R}^{r,s}$ we have: 
\begin{align*}
\det (\Omega(z)+\lambda)^2
&=\Bigr(\bigr(\lambda^2+\Vert\mu\Vert^2+\Vert\nu\Vert^2\bigr)^2  -4\Vert\mu\Vert^2 \Vert\nu\Vert^2 \Bigr)^{N}\\
&=\Big{[} \big{(}\lambda^2+(\| \mu\|+ \|\nu\|)^2 \big{)} \big{(} \lambda^2+ (\| \mu\| - \|\nu\|)^2 \big{)} \Big{]}^N. 
\end{align*}
\end{prop}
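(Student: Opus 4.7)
The plan is to treat the statement as a short algebraic corollary of the reduction already carried out in the paragraph immediately preceding the proposition. That paragraph has produced the identity
\[
\det\bigl(\Omega(z)+\lambda\bigr) \;=\; \det\bigl((\lambda^2+\|\nu\|^2-\|\mu\|^2)I - 2\lambda A(\mu)\bigr)
\]
on the $N\times N$ block, together with the equality $\det(\Omega(z)+\lambda) = \det(\Omega(z)-\lambda)$ coming from the skew-symmetry of $\Omega(z)$. My first step would therefore be to multiply the two $N\times N$ factors corresponding to $+\lambda$ and $-\lambda$; this is legitimate because both are polynomials in the single matrix $A(\mu)$ and hence commute. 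The result is
\[
\det\bigl(\Omega(z)+\lambda\bigr)^2 \;=\; \det\Bigl[(\lambda^2+\|\nu\|^2-\|\mu\|^2)^2 I \,-\, 4\lambda^2 A(\mu)^2\Bigr].
\]

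The decisive input from Lemma \ref{relations} is now the identity $A(\mu)^2 = -\|\mu\|^2 I$, which collapses the bracket to a scalar multiple of the $N\times N$ identity. Because the matrix involved is $N\times N$, its determinant is the corresponding scalar raised to the $N$-th power:
\[
\det\bigl(\Omega(z)+\lambda\bigr)^2 \;=\; \Bigl[(\lambda^2+\|\nu\|^2-\|\mu\|^2)^2 + 4\lambda^2 \|\mu\|^2\Bigr]^N.
\]
All that remains is a purely scalar algebraic identity.

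That identity is handled by a routine completion of squares: setting $a=\|\mu\|^2$, $b=\|\nu\|^2$ and $x=\lambda^2$, one rewrites
\[
(x+b-a)^2 + 4xa \;=\; x^2 + 2x(a+b) + (a-b)^2 \;=\; (x+a+b)^2 - 4ab,
\]
which gives the first equality of the proposition, and then the difference-of-squares factorization $(x+a+b)^2 - 4ab = \bigl(x+(\|\mu\|+\|\nu\|)^2\bigr)\bigl(x+(\|\mu\|-\|\nu\|)^2\bigr)$ yields the second. There is no serious obstacle here: the nontrivial linear-algebraic work (the Schur-complement reduction and the quadratic identities for $A,B,C,D$) has already been completed above the statement, and what is left is bookkeeping — one only has to verify the automatic commutativity of the two factors and to apply $A(\mu)^2 = -\|\mu\|^2 I$ correctly.
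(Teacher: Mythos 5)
Your proposal is correct and follows essentially the same route as the paper: the paper's displayed computation preceding the proposition likewise combines $\det(\Omega(z)+\lambda)=\det\bigl(\|\nu\|^2-\|\mu\|^2+\lambda^2-2\lambda A\bigr)$ with the skew-symmetry identity $\det(\Omega(z)+\lambda)=\det(\Omega(z)-\lambda)$, multiplies the two commuting $N\times N$ factors, and invokes $A(\mu)^2=-\|\mu\|^2 I$ from Lemma \ref{relations} to reduce to the scalar identity you verify. Your only additions---making the commutativity of the two factors and the exponent $N$ explicit---are harmless bookkeeping that the paper leaves implicit.
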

By replacing $z$ with $\sqrt{-1}z$, $\mu$ with $\sqrt{-1}\mu$ and $\nu$ with $\sqrt{-1}\nu$ we have: 
\begin{cor}\label{eigenvalues}
The eigenvalues $\lambda$ of the matrix $\Omega(\sqrt{-1}z)$ are $\lambda=\pm(\Vert\mu\Vert\pm\Vert\nu\Vert)$.
\end{cor}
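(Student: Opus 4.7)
The proof is essentially a substitution into Proposition \ref{characteristic polynomial}, which I just proved in the preceding paragraphs. My plan is as follows.

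Since $\Omega(z)$ is linear in $z$ by (\ref{Omega(z)}), the matrix $\Omega(\sqrt{-1}z)$ equals $\sqrt{-1}\, \Omega(z)$; equivalently, passing from $z=(\mu,\nu)^T$ to $\sqrt{-1}\,z$ amounts to replacing $\mu$ by $\sqrt{-1}\,\mu$ and $\nu$ by $\sqrt{-1}\,\nu$ inside the block form (\ref{relation_Omega_z_J_z}). Under this replacement the Euclidean norms transform by
\[
\Vert\mu\Vert^{2}=\sum_{i=1}^{r}\mu_{i}^{2}\ \longmapsto\ \sum_{i=1}^{r}(\sqrt{-1}\,\mu_{i})^{2}=-\Vert\mu\Vert^{2},
\]
and analogously $\Vert\nu\Vert^{2}\mapsto -\Vert\nu\Vert^{2}$.

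Plugging these transformations into the formula of Proposition \ref{characteristic polynomial} yields
\[
\det\bigl(\Omega(\sqrt{-1}z)+\lambda\bigr)^{2}
=\Bigl[\bigl(\lambda^{2}-\Vert\mu\Vert^{2}-\Vert\nu\Vert^{2}\bigr)^{2}-4\Vert\mu\Vert^{2}\Vert\nu\Vert^{2}\Bigr]^{N},
\]
and factoring the bracket as a difference of squares gives
\[
\det\bigl(\Omega(\sqrt{-1}z)+\lambda\bigr)^{2}
=\Bigl[\bigl(\lambda^{2}-(\Vert\mu\Vert+\Vert\nu\Vert)^{2}\bigr)\bigl(\lambda^{2}-(\Vert\mu\Vert-\Vert\nu\Vert)^{2}\bigr)\Bigr]^{N}.
\]

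The roots of this polynomial in $\lambda$ are therefore exactly $\lambda=\pm(\Vert\mu\Vert\pm\Vert\nu\Vert)$, each occurring with multiplicity $N$ (taking the square root of the whole expression). Since the left-hand side is the square of the characteristic polynomial of $-\Omega(\sqrt{-1}z)$ (and $\Omega$ is skew-symmetric so the spectrum is symmetric about $0$), these four numbers are precisely the eigenvalues of $\Omega(\sqrt{-1}z)$, which completes the proof.

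There is essentially no obstacle; the only point to watch is the sign flip coming from $\Vert\sqrt{-1}\,\mu\Vert^{2}=-\Vert\mu\Vert^{2}$, which is what turns the sums of squares appearing in Proposition \ref{characteristic polynomial} into the differences of squares needed to read off real eigenvalues.
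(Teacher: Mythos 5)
Your proposal is correct and follows exactly the paper's route: the paper derives Corollary \ref{eigenvalues} from Proposition \ref{characteristic polynomial} by the very substitution $z\mapsto\sqrt{-1}z$ (i.e.\ $\mu\mapsto\sqrt{-1}\mu$, $\nu\mapsto\sqrt{-1}\nu$) that you carry out, turning the sums of squares into differences of squares so the real eigenvalues $\pm(\Vert\mu\Vert\pm\Vert\nu\Vert)$ can be read off. One small side remark: since your displayed polynomial is the \emph{square} of $\det\bigl(\Omega(\sqrt{-1}z)+\lambda\bigr)$, each factor $\bigl(\lambda\mp(\Vert\mu\Vert\pm\Vert\nu\Vert)\bigr)$ appears in the characteristic polynomial itself with multiplicity $N/2$ (not $N$), consistent with Proposition \ref{eigenspace}(i) — but this does not affect the corollary, which only identifies the set of eigenvalues.
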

If $z\not= 0$ then the matrix $\Omega(z)$ has the eigenvalue zero only when $\Vert\mu\Vert=\Vert\nu\Vert$. 
In this case the matrices $B(\nu)$ and $D(\mu)$ are non-singular so that the dimension of the solution space $\Omega(z)\cdot x=0$ is $N$ 
($=$ half the dimension of $V$). 
\begin{prop}\label{zero solution space}
Assume that $z\not= 0$ and $\Vert\mu\Vert=\Vert\nu\Vert$. The kernel of $\Omega(z)$ is given by
\[
\Ker\Omega(z)=
\left\{
~\begin{pmatrix}B(\nu)x\\-D(\mu)x
\end{pmatrix}\:
~\Biggr|~ {x}=(x_1,\ldots,x_{N})^T\in\mathbb{R}^{N}
~\right\}.
\]
\end{prop}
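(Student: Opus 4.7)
The plan is to verify directly that vectors of the asserted form lie in $\Ker\Omega(z)$ using the algebraic identities in Lemma \ref{relations}, and then to match the dimension of the exhibited subspace with $\dim\Ker\Omega(z)$ via the characteristic polynomial from Proposition \ref{characteristic polynomial}.

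First I would apply the block form (\ref{relation_Omega_z_J_z}) of $\Omega(z)$ to an arbitrary vector $(B(\nu)x,-D(\mu)x)^{T}$. The top block produces $-A(\mu)B(\nu)x-B(\nu)D(\mu)x$, which vanishes by Lemma \ref{relations}(b). For the bottom block one obtains $-B^{T}(\nu)B(\nu)x-D(\mu)^{2}x$; Lemma \ref{relations} together with the transpose relation $C=B^{T}$ gives $B^{T}(\nu)B(\nu)=\Vert\nu\Vert^{2}I$ and $D(\mu)^{2}=-\Vert\mu\Vert^{2}I$, so the bottom block reduces to $(\Vert\mu\Vert^{2}-\Vert\nu\Vert^{2})x$, which is zero by the hypothesis $\Vert\mu\Vert=\Vert\nu\Vert$. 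This establishes the containment $\supseteq$.

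Next I need to see that the parameters $x\in\mathbb{R}^{N}$ genuinely parameterize an $N$-dimensional subspace. Since $z\ne 0$ and $\Vert\mu\Vert=\Vert\nu\Vert$, both norms are strictly positive. Then $D(\mu)^{2}=-\Vert\mu\Vert^{2}I$ shows that $D(\mu)$ is invertible on $\mathbb{R}^{N}$, so the linear map $x\mapsto (B(\nu)x,-D(\mu)x)^{T}$ is injective. This produces an $N$-dimensional subspace inside $\Ker\Omega(z)$.

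The final step is the reverse inequality $\dim\Ker\Omega(z)\leq N$. Specializing Proposition \ref{characteristic polynomial} to $\Vert\mu\Vert=\Vert\nu\Vert$ yields
\[
\det\bigl(\Omega(z)+\lambda\bigr)^{2}=\lambda^{2N}\bigl(\lambda^{2}+4\Vert\mu\Vert^{2}\bigr)^{N}.
\]
Writing $\det(\Omega(z)+\lambda)=\lambda^{k}P(\lambda)$ with $P(0)\ne 0$, comparison of the powers of $\lambda$ on each side forces $k=N$. Since $\Omega(z)$ is real skew-symmetric, hence normal, its algebraic and geometric multiplicities at $0$ coincide, so $\dim\Ker\Omega(z)=N$, and together with injectivity from the previous step the kernel equals the exhibited subspace. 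I expect the main bookkeeping point to be the correct derivation of $B^{T}(\nu)B(\nu)=\Vert\nu\Vert^{2}I$, which is not stated verbatim in Lemma \ref{relations} but is forced by combining parts (a), (c), and the transpose relation $C=B^{T}$; everything else is either a direct substitution or a dimension count.
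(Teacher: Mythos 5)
Your proof is correct and follows essentially the route the paper leaves implicit: the surrounding text only remarks that $B(\nu)$ and $D(\mu)$ are non-singular so the kernel has dimension $N$, and your argument fills this in precisely --- direct verification that the displayed vectors are annihilated by $\Omega(z)$ via Lemma \ref{relations}, injectivity of $x\mapsto (B(\nu)x,-D(\mu)x)^{T}$ from $D(\mu)^{2}=-\Vert\mu\Vert^{2}I$ with $\Vert\mu\Vert=\Vert\nu\Vert>0$, and the matching upper bound from Proposition \ref{characteristic polynomial}. Your explicit appeal to normality of the skew-symmetric matrix $\Omega(z)$ to equate algebraic and geometric multiplicity at $0$ is a detail the paper glosses over, but it is exactly the intended dimension count, so no genuinely different method is involved.
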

Finally we determine the dimension of the eigenspaces corresponding to the above eigenvalues $\pm(\Vert\mu\Vert\pm \Vert\nu\Vert)$.  
\begin{prop}\label{eigenspace}
Let  $z\not= 0$. Then the dimensions of the eigenspaces $E_{\lambda}$ of $\Omega(\sqrt{-1}z)$ with respect to the eigenvalue 
$\lambda=\pm(\Vert\mu\Vert\pm\Vert\nu\Vert)$ are given as follows: 
\begin{itemize}
\item[(i)] If neither $\mu$ nor $\nu$ is zero, then $\dim E_{\lambda}=N/2$ and therefore $\frac{\dim V}{2}=N$ is even.
\item[(ii)] If $\mu=0$ and $\nu \ne 0$, then $\dim E_{\|\nu\|}= N= \dim E_{-\|\nu\|}$. 
\item[(iii)] If $\mu \ne 0$ and $\nu=0$, then  $\dim E_{\|\mu\|}= N=
  \dim E_{-\|\mu\|}$. 
\end{itemize}
\end{prop}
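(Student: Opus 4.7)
\medskip

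\noindent\textbf{Proof plan for Proposition \ref{eigenspace}.}
The plan is to read off the dimensions of the eigenspaces directly from the factored form of the characteristic polynomial given in Proposition \ref{characteristic polynomial}, after taking a careful (polynomial) square root. Since $\Omega(z)$ is real skew-symmetric, it is diagonalizable over $\mathbb{C}$ with purely imaginary eigenvalues, so the same holds for $\Omega(\sqrt{-1}z)=\sqrt{-1}\,\Omega(z)$, which is then diagonalizable with real eigenvalues. Consequently algebraic and geometric multiplicities agree, and it suffices to count algebraic multiplicities.

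Write $a:=\|\mu\|+\|\nu\|$ and $b:=\big|\|\mu\|-\|\nu\|\big|$, and let $\chi(\lambda):=\det\bigl(\Omega(z)+\lambda I\bigr)$, a real polynomial of degree $2N$. Proposition \ref{characteristic polynomial} gives
\[
\chi(\lambda)^2=\bigl[(\lambda^2+a^2)(\lambda^2+b^2)\bigr]^{N}.
\]
First I would treat case (i) with $\mu,\nu\neq 0$ and $\|\mu\|\neq\|\nu\|$. Then $a>b>0$, and the two factors $\lambda^2+a^2$ and $\lambda^2+b^2$ are distinct irreducible polynomials in $\mathbb{R}[\lambda]$, hence coprime. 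Unique factorization in $\mathbb{R}[\lambda]$ forces
\[
\chi(\lambda)=\pm(\lambda^2+a^2)^{p}(\lambda^2+b^2)^{q}\quad\text{with}\quad 2p=2q=N,
\]
so $N$ must be even and $p=q=N/2$. Therefore $\Omega(\sqrt{-1}z)$ has each of the four eigenvalues $\pm(\|\mu\|\pm\|\nu\|)$ with multiplicity $N/2$, proving (i) in this subcase. The subcase $\|\mu\|=\|\nu\|$ of (i) (where $b=0$) then forces $\chi(\lambda)=\pm\lambda^{N}(\lambda^2+a^2)^{N/2}$; the $N/2$ statement for $\lambda=\pm(\|\mu\|+\|\nu\|)$ is immediate, while the $N$-dimensional zero-eigenspace, to be split equally between $\pm(\|\mu\|-\|\nu\|)=0$, is already described explicitly by Proposition \ref{zero solution space}.

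For cases (ii) and (iii) we have $a=b=\|\nu\|$ or $a=b=\|\mu\|$, respectively. Then
\[
\chi(\lambda)^2=(\lambda^2+a^2)^{2N},
\]
which forces $\chi(\lambda)=\pm(\lambda^2+a^2)^{N}$, giving eigenvalues $\pm a$ of $\Omega(\sqrt{-1}z)$ each with multiplicity $N$. This can also be verified independently (as a consistency check) by squaring the block matrix \eqref{relation_Omega_z_J_z}: when $\mu=0$ one gets $\Omega(z)^2=-\|\nu\|^2 I_{2N}$ from $B(\nu)B^T(\nu)=B^T(\nu)B(\nu)=\|\nu\|^2 I_N$ (via Lemma \ref{relations}(a),(c)), and analogously $\Omega(z)^2=-\|\mu\|^2 I_{2N}$ when $\nu=0$ from $A(\mu)^2=D(\mu)^2=-\|\mu\|^2 I_N$.

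The only delicate point is the polynomial square-root step in case (i), which simultaneously yields the multiplicity $N/2$ and the parity statement that $\dim V=2N$ is divisible by $4$; this is the main substantive content of the proposition and the one place where an argument beyond direct reading of Proposition \ref{characteristic polynomial} is needed.
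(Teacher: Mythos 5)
Your proposal is correct, and it takes a genuinely different route from the paper's proof of cases (i) and (ii). The paper argues by explicit eigenvector analysis: writing the eigenvalue equation (\ref{GL_eigenvector_equation}) in block form, it uses Lemma \ref{relations} to reduce the $\lambda=\Vert\mu\Vert+\Vert\nu\Vert$ eigenspace of $\Omega(\sqrt{-1}z)$ to the $(-\Vert\mu\Vert)$-eigenspace of the block $A(\sqrt{-1}\mu)$, the correspondence $y=\Vert\nu\Vert\, B(\sqrt{-1}\nu)^{-1}x$ being a bijection because $B(\nu)$ is invertible for $\nu\neq 0$; the dimension $N/2$ then comes from the skew-symmetry of $A(\mu)$ together with $A(\mu)^2=-\Vert\mu\Vert^2 I$ (so the eigenvalues $\pm\Vert\mu\Vert$ of the Hermitian matrix $A(\sqrt{-1}\mu)$ split evenly), and this is also the source of the parity statement. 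You instead count algebraic multiplicities by taking a polynomial square root of Proposition \ref{characteristic polynomial} via unique factorization in $\mathbb{R}[\lambda]$, and invoke the Hermitian diagonalizability of $\Omega(\sqrt{-1}z)$ to equate algebraic and geometric multiplicities; in effect you establish Corollary \ref{Corollary_det_Omega_plus_lambda_s>0} first and read the proposition off it, reversing the paper's logical order --- which is legitimate, since Proposition \ref{characteristic polynomial} is proved beforehand and independently of Proposition \ref{eigenspace}. Notably, the paper itself uses exactly your style of argument for case (iii), where it observes that $\Omega(\sqrt{-1}z)$ is Hermitian and deduces the dimension $N$ from the expression for $\det\bigl(\Omega(z)+\lambda\bigr)^2$; your proposal amounts to running that argument uniformly in all cases. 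What the paper's computation buys is an explicit description of the eigenvectors (the reduction to the smaller matrix $A(\sqrt{-1}\mu)$), structural information your counting argument does not produce; what your route buys is brevity, uniformity, a clean derivation of the divisibility of $\dim V=2N$ by $4$, and a more careful treatment of the degenerate subcase $\Vert\mu\Vert=\Vert\nu\Vert$ inside (i), where the zero eigenspace is $N$-dimensional by Proposition \ref{zero solution space} and the assertion $\dim E_\lambda=N/2$ must be read with the convention that the coincident labels $\pm(\Vert\mu\Vert-\Vert\nu\Vert)=0$ share it --- a point the paper's proof passes over silently.
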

\begin{proof}
(i): Let $(x,y)^T \in \mathbb{R}^N \times \mathbb{R}^N \cong \mathbb{R}^{2N}$ be an eigenvector of the matrix $\Omega(\sqrt{-1}z)$ 
with respect to the eigenvalue $\lambda=\|\mu\|+\|\nu\|$, where $\mu,\nu\not=0$, then
\begin{equation}\label{GL_eigenvector_equation}
\Omega(\sqrt{-1}z)\:\begin{pmatrix}{x}\\{y}\end{pmatrix}=
\begin{pmatrix}-A(\sqrt{-1}\mu)&B(\sqrt{-1}\nu)\\
-C(\sqrt{-1}\nu)&D(\sqrt{-1}\mu)\end{pmatrix}\: \begin{pmatrix}{x}\\{y}\end{pmatrix}=
\bigr(\Vert\mu\Vert+\Vert\nu\Vert\bigr)\begin{pmatrix}{x}\\{y}\end{pmatrix}.
\end{equation}
Multiplying the first equation by $A(\sqrt{-1}\mu)$ and the second equation by $B(\sqrt{-1}\nu)$ gives
\begin{align*}
A(\sqrt{-1}\mu)\Big{(}-A(\sqrt{-1}\mu)\:{x}+B(\sqrt{-1}\nu)\:{y}\Big{)}
&=(\Vert\mu\Vert+\Vert\nu\Vert)A(\sqrt{-1}\mu)\:{x}  \\
B(\sqrt{-1}\nu)\Big{(}-C(\sqrt{-1}\nu)\:{x}+D(\sqrt{-1}\mu)\:{y}\Big{)}
&=(\Vert\mu\Vert+\Vert\nu\Vert)B(\sqrt{-1}\nu)\:{y}. 
\end{align*}
On the left hand side we use Lemma \ref{relations} and deduce the following two equations
\begin{align*}
-\Vert\mu\Vert^2\:{x}+A(\sqrt{-1}\mu)B(\sqrt{-1}\nu)\:{y}
&=(\Vert\mu\Vert+\Vert\nu\Vert)A(\sqrt{-1}\mu)\:{x}\\
\Vert\nu\Vert^2\:{x}+B(\sqrt{-1}\nu)D(\sqrt{-1}\mu)\:{y}
&=(\Vert\mu\Vert+\Vert\nu\Vert)B(\sqrt{-1}\nu)\:{y}.
\end{align*}
Adding these identities and using Lemma \ref{relations}, (ii) gives
\begin{equation*}
\big{(} \| \nu\| -\| \mu \|\big)\: x=A(\sqrt{-1} \mu) \: x+B(\sqrt{-1} \nu) \: y. 
\end{equation*}
Together with the Equation (\ref{GL_eigenvector_equation}) we find that $B(\sqrt{-1}\nu) y =\Vert\nu\Vert x$.  This shows that 
\[
A(\sqrt{-1}\mu)x  =-\Vert\mu\Vert x. 
\]
\par 
Since $B(\nu)$ is non-singular for $\nu\not=0$ the vector $y$ is uniquely determined by $x$. Conversely, the eigenvector $x$ of the matrix $A(\sqrt{-1}\mu)$ with the eigenvalue $-\Vert\mu\Vert\ne 0$, 
determines the eigenvector of the matrix $\Omega(\sqrt{-1}z)$ by putting ${y}=\Vert\nu\Vert {B(\sqrt{-1}\nu)}^{-1} x$.
\vspace{1mm}\par 
Since $A(\mu)$ is skew-symmetric for any real vector $\mu$ and ${A(\mu)}^2= -{\Vert\mu\Vert}^2$, the dimension of the eigenspace of the matrix
$A(\sqrt{-1}\mu)$ with respect to the eigenvalue $\Vert\mu\Vert$ is half the size of the matrix $A$, i.e. it equals $\frac{N}{2}$. The remaining eigenvalues can be 
treated similarly and therefore (i) follows. 
\vspace{1ex}\\
(ii): Under the assumption of (ii) and by applying the relations in Lemma \ref{relations} we have $A=D=0$ and $B({\nu})C(\nu)=\|\nu\|^2=C(\nu)B(\nu)$.  Let 
$(x,y)^T\in \mathbb{R}^N \times \mathbb{R}^N \cong \mathbb{R}^{2N}$ be an eigenvector of $\Omega(\sqrt{-1}z)$ with eigenvalue $\lambda=\|\nu\|$. Then the equation
\begin{equation*}
\Omega(\sqrt{-1}z)
\left(
\begin{array}{c}
x\\
y
\end{array}\right)= 
\left( 
\begin{array}{cc}
0 & B(\sqrt{-1}\nu)\\
\|\nu\|^2B^{-1}(\sqrt{-1}\nu) & 0
\end{array}
\right)
\left(
\begin{array}{c}
x\\
y
\end{array}
\right) = 
\|\nu\|
\left(
\begin{array}{c}
x\\
y
\end{array}
\right) 
\end{equation*}
is equivalent to $B(\sqrt{-1}\nu){y}=\|\nu\|{x}$, which can be uniquely solved for any given $x \in \mathbb{R}^N$. The case $\lambda=-\|\nu\|$ is treated in the 
same way and (ii) follows. 
\vspace{1ex}\\
(iii): If $\mu\ne 0$ and $\nu=0$, then $C=B=0$ and with $\lambda=\|\mu\|$ we have the equation
\begin{equation*}
\Omega(\sqrt{-1} z)
\left(
\begin{array}{c}
x\\
y
\end{array}\right)= 
\left( 
\begin{array}{cc}
-A(\sqrt{-1}\mu) & 0\\
0 & D(\sqrt{-1}\mu)
\end{array}
\right)
\left(
\begin{array}{c}
x\\
y
\end{array}
\right) = 
\|\mu\|
\left(
\begin{array}{c}
x\\
y
\end{array}
\right). 
\end{equation*} 
\par 
The matrix $\Omega(\sqrt{-1}z)$ is Hermitian and therefore can be diagonalized. From the expression of $\text{det }(\Omega(z)+\lambda)^2$, we deduce that the eigenspaces have dimension $N$.
The case $\lambda=-\|\mu\|$ can be treated similarly.  
\end{proof}
\begin{cor}\label{Corollary_det_Omega_plus_lambda_s>0}
Let $s>0$, then the characteristic polynomial of $\Omega(\sqrt{-1}z)$ is given by:
\[
\det{\Big{(}\Omega(\sqrt{-1}z)+\lambda\Big{)}}= \bigr(\lambda^2-(\Vert\mu\Vert+\Vert\nu\Vert)^2\bigr)^{N/2}
\bigr(\lambda^2-(\Vert\mu\Vert-\Vert\nu\Vert)^2\bigr)^{N/2}.
\]
\end{cor}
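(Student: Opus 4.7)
The plan is to combine Proposition \ref{characteristic polynomial} with the multiplicity count from Proposition \ref{eigenspace}. Proposition \ref{characteristic polynomial} already yields the \emph{square} of the determinant, so the work is essentially to take a square root and identify which factor carries which multiplicity.

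First I would substitute $z\mapsto \sqrt{-1}z$ into the identity of Proposition \ref{characteristic polynomial}. Under this substitution $\|\mu\|^{2}$ and $\|\nu\|^{2}$ both pick up a minus sign while the cross term $4\|\mu\|^{2}\|\nu\|^{2}$ is invariant, giving
\[
\det\!\bigl(\Omega(\sqrt{-1}z)+\lambda\bigr)^{2}
=\Bigl[\bigl(\lambda^{2}-(\|\mu\|+\|\nu\|)^{2}\bigr)\bigl(\lambda^{2}-(\|\mu\|-\|\nu\|)^{2}\bigr)\Bigr]^{N}
\]
after the standard factorization $X^{2}-Y^{2}=(X-Y)(X+Y)$ with $X=\lambda^{2}-\|\mu\|^{2}-\|\nu\|^{2}$ and $Y=2\|\mu\|\|\nu\|$. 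Taking square roots leaves a sign ambiguity and, a priori, allows the two factors to be distributed asymmetrically (for instance as exponents $j$ and $N-j$) rather than symmetrically as $N/2$ and $N/2$.

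Both ambiguities are pinned down by Proposition \ref{eigenspace}. When $\mu,\nu\ne 0$, part (i) says that each of the four eigenvalues $\pm(\|\mu\|\pm\|\nu\|)$ has a geometric eigenspace of dimension $N/2$; since $\Omega(\sqrt{-1}z)$ is Hermitian, hence diagonalizable, the algebraic and geometric multiplicities agree, and the characteristic polynomial must split as claimed, with leading coefficient $+1$. In the degenerate cases $\mu=0$ or $\nu=0$, parts (ii)--(iii) give eigenvalue $\pm\|\nu\|$ (respectively $\pm\|\mu\|$) each with multiplicity $N$, and on the other hand the two factors in the formula coincide, so the exponents $N/2+N/2=N$ match; consistency with the squared identity above is immediate.

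The main obstacle is really just the bookkeeping at this last step: making sure that extracting the square root distributes the multiplicities evenly as $N/2$ on each factor rather than lumping them onto one factor. Invoking the Hermiticity of $\Omega(\sqrt{-1}z)$ (so that algebraic equals geometric multiplicity) together with Proposition \ref{eigenspace}(i) resolves this once and for all in the generic case, and the boundary cases follow from the continuity of the coefficients of the characteristic polynomial in $z$, or equivalently from a direct check against Proposition \ref{eigenspace}(ii)--(iii).
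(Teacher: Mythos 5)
Your proposal is correct and follows essentially the same route as the paper, which obtains the corollary by substituting $z\mapsto\sqrt{-1}z$ in Proposition \ref{characteristic polynomial} and then fixing the multiplicities via the eigenspace dimensions of Proposition \ref{eigenspace}, using that $\Omega(\sqrt{-1}z)=\sqrt{-1}\,\Omega(z)$ is Hermitian and hence diagonalizable. Your explicit attention to the sign/distribution ambiguity in taking the square root, and the continuity argument for the degenerate cases $\mu=0$ or $\nu=0$, only makes explicit what the paper leaves implicit.
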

\begin{rem}
In the case $s=0$ we can find an admissible module $V$ with respect to an inner product ($=$ positive definite scalar product) and so we obtain: 
$$\Omega(z)=J_z\hspace{3ex} \mbox{\it  and} \hspace{3ex} \Omega(z)^2=J_z^2=-\|z\|^2 I.$$
Therefore, the statement of Corollary  \ref{Corollary_det_Omega_plus_lambda_s>0} remains valid even in this case: 
$$\text{det }\left(\Omega(\sqrt{-1}z)+\lambda\right)=\left(\lambda^2-\|z\|^2\right)^{N}.$$
\end{rem}
\section{Spectrum of the sub-Laplacian on pseudo $H$-type nilmanifolds}
\label{SpecH-typeNil}
Let $\{J,~V,~ \langle \bullet,\,\bullet \rangle_V\}$ be an admissible module of the Clifford algebra $C\ell_{r,s}$. Based on the results of the previous sections  
we derive an explicit expression for the heat trace of the sub-Laplacian $\Delta_{\textup{sub}}^{\Gamma_{r,s}(V)\backslash\mathbb{G}_{r,s}(V)}$ 
on the nilmanifolds $\Gamma_{r,s}(V)\backslash \mathbb{G}_{r,s}(V)$ for $r>0$ and $s>0$. 
In fact, with a view to the decomposition (\ref{GL_decomposition_of_operator_trace}), it suffices to calculate the heat trace of each component operator $\mathcal{D}^{({\bf n})}$ 
with respect to the element ${\bf n}$ in the dual $[\mathbb{A}\cap\Gamma_{r,s}(V)]^*$ of the lattice $\mathbb{A}\cap\Gamma_{r,s}(V)$. 
\vspace{1mm}\par 
Recall that $\mathbb{A}\cong\mathbb{R}^{r,s}$ denotes the center of the group $\mathbb{G}_{r,s}(V)$. However, 
our notation will not indicate the dependence on the parameter $(r,s)$.
\subsection{Determination of the spectrum}
An element ${\bf n}$ in the dual lattice $\bigr[\Gamma_{r,s}\cap \mathbb{A}\bigr]^*$ can be expressed as
\[
{\bf n}=2\left(\sum\limits_{i=1}^{r}
m_iZ_i+\sum\limits_{j=1}^sn_jZ_{r+j}\right) \hspace{2ex} \mbox{\it where} \hspace{2ex} (m,n) \in \mathbb{Z}^{r+s}. 
\]
We also use the notation ${\bf n}=2(\mu+\nu)$ with $\mu+\nu=(m_1,\cdots,m_r,n_1\cdots,n_s)\in\mathbb{Z}^{r+s}$. Now Theorem \ref{heat trace: final form} implies: 
\vspace{1ex}\\
(1)\quad If ${\bf n}=0$, then the trace of the operator {$e^{-t\mathcal{D}^{(0)}}$} is given by
\begin{equation}\label{trace1}
\text{{\bf tr}}\Big{(} e^{-t\mathcal{D}^{(0)}}\Big{)}
=\frac{1}{(2\pi\,t)^{N}}\sum\limits_{{\bf \ell}\in\mathbb{Z}^{2N}} e^{-\frac{\Vert{\bf \ell}\Vert^2}{2t}}.
\end{equation}
\noindent
(2)\quad Assume that ${\bf n}\in \bigr[\Gamma_{r,s}\cap\mathbb{A}\bigr]^*$ with  
$$\sum\limits_{i=1}^{r}{m_i}^2=\sum\limits_{j=1}^{s}{n_j}^2,$$ 
and let $d_0>0$ be the greatest common divisor of $(\mu,\,\nu)=(m_1,\cdots,m_r,n_1,\cdots,n_s)$. Define integers $m_i^{\prime}$ and $n_i^{\prime}$ through the 
equations $m_i={m_i}^{\prime}d_0$ and $n_j={n_j}^{\prime}d_0$. According to Proposition \ref{zero solution space} the solution space $\mathbb{M}({\bf n})=
\bigr\{{\bf\ell}\in\mathbb{Z}^{2N}~|~\Omega({\bf n})\: {{\bf\ell}}=0\bigr\}$ is given by:
\[
\mathbb{M}({\bf
  n})=\left\{
~\begin{pmatrix}
B(\nu^{\prime}) \ell \\-D(\mu^{\prime})\ell  \end{pmatrix}
\: 
~\Big{|}~{{\bf \ell}}=( \ell_1,\cdots,\ell_{N})^T\in\mathbb{Z}^{N}\right\},
\]
where $(\mu^{\prime},{\nu}^{\prime})=({m_1}^{\prime},\cdots,{m_r}^{\prime},{n_1}^{\prime},\cdots,{n_s}^{\prime})$. Hence 
\begin{equation}\label{trace2}
\text{{\bf tr}}\Big{(}e^{-t\mathcal{D}^{({\bf n})}}\Big{)}
=\frac{1}{(\pi \,t)^{N/2}}
\sum_{\ell \in \mathbb{Z}^{2N}}\limits\,
e^{-\frac{\Vert\mu\Vert^2\: \Vert{\bf\ell}\Vert^2}{{d_0}^{2}\,t}}
\left(\frac{2\Vert\mu\Vert}{\sinh\bigr(8\pi\,t\Vert\mu\Vert\bigr)}\right)^{N/2}.
\end{equation}
(3)\quad For ${\bf n}=2(\mu+\nu)$ with
$\Vert\mu\Vert\not=\Vert\nu\Vert$ 
the matrix $\Omega({\bf n})$ is non-singular.  
In this case the solution space $\mathbb{M}({\bf n})=\{0\}$ is trivial and
\begin{equation}\label{trace3}
\text{{\bf tr}}\Big{(} e^{-t\mathcal{D}^{({\bf n})}}\Big{)}=
{2^{N}}
\: \left(\frac{\Vert\mu\Vert^2-\Vert\nu\Vert^2}
{\sinh\{4\pi\,t(\Vert\mu\Vert+\Vert\nu\Vert)\}
\sinh\{4\pi\,t(\Vert\mu\Vert-\Vert \nu\Vert)\}}\right)^{N/2.}.
\end{equation}
\begin{rem}\label{remark_dimension_from_the_spectrum}
If $s=0$, then the matrix $\Omega({\bf n})$ is always non-singular for ${\bf n}\neq 0$. In this case we find: 
$$\text{{\bf tr}}\Big{(} e^{-t\mathcal{D}^{({\bf n})}}\Big{)}=\left(\frac{\|{\bf n}\|}{\sinh{2\pi t\|{\bf n}\|}}\right)^N.$$
{Furthermore, from the heat trace formula (\ref{GL_decomposition_of_operator_trace}) we conclude that the eigenvalues of the sub-Laplacian on 
$\Gamma_{r,0}(V)\backslash\mathbb{G}_{r,0}(V)$ are given by 
\begin{itemize}
\item $\lambda_l=2\pi^2\|l\|^2$ for $l\in\Z^{2N}$.
\item   $\beta_{n,m}=4\pi\|n\|(2m+N)$ for $n\in \Z^{r}\backslash\{0\}$ and $m\in\N$ with multiplicity $4^N\|n\|^N \binom{m+N-1}{N-1}.$
\end{itemize}
Since $\lambda_l^2\in \pi^4\Z$ and $\beta_{n,m}^2\in\pi^2\Z$, we can distinguish between these different numbers, i.e. knowing the eigenvalues, we can extract the dimension of the admissible module $V$. We conclude that if two manifolds $\Gamma_{r,0}(V)\backslash\mathbb{G}_{r,0}(V)$ and $\Gamma_{r^\prime,0}(V^\prime)\backslash\mathbb{G}_{r^\prime,0}(V^\prime)$ are isospectral with respect to the sub-Laplacian, then they have the same dimension.}

\end{rem}
Based on the dependence of the above heat traces on the parameters $\|\mu\|$, $\|\nu\|$ and $N$ we conclude: 
\begin{cor}\label{isospectrality}
$\Gamma_{r,s}\backslash \mathbb{G}_{r,s}$ and $\Gamma_{s,r}\backslash \mathbb{G}_{s,r}$ are isospectral 
with respect to the Laplacian and the sub-Laplacian, if the dimensions of their admissible modules coincide.
\end{cor}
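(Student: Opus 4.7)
The plan is to compare the heat trace formulas of the two sub-Laplacians term by term via the decomposition in Theorem \ref{heat trace: final form}, using the explicit expressions (\ref{trace1})--(\ref{trace3}) for each component operator $\mathcal{D}^{(\mathbf{n})}$. The crucial observation is that all three cases of the trace formula depend on $\mathbf{n}$ only through the pair of Euclidean norms $(\|\mu\|,\|\nu\|)$ and through $N=\tfrac{1}{2}\dim V$, and each expression is symmetric under the interchange $\|\mu\|\leftrightarrow\|\nu\|$. In case (1) (i.e.\ $\mathbf{n}=0$) the contribution depends only on $N$; in case (2) one has $\|\mu\|=\|\nu\|$ and the formula depends only on $\|\mu\|$, $N$ and the gcd $d_0$ of all coordinates of $(\mu,\nu)$, which is itself symmetric; in case (3) the factor
\[
\frac{\|\mu\|^2-\|\nu\|^2}{\sinh\{4\pi t(\|\mu\|+\|\nu\|)\}\,\sinh\{4\pi t(\|\mu\|-\|\nu\|)\}}
\]
is invariant under $\mu\leftrightarrow\nu$, because numerator and one sinh factor each change sign simultaneously. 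Hence taking the $N/2$-th power is unambiguous and symmetric.

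Next I would set up an explicit bijection between the dual lattices. Writing a general element of $[\Gamma_{r,s}\cap\mathbb{A}]^*$ as $\mathbf{n}=2(\mu+\nu)$ with $\mu\in\mathbb{Z}^{r}$ and $\nu\in\mathbb{Z}^{s}$, and analogously $\mathbf{n}'=2(\mu'+\nu')$ with $\mu'\in\mathbb{Z}^{s}$, $\nu'\in\mathbb{Z}^{r}$ for the $(s,r)$-case, the map $(\mu,\nu)\mapsto(\nu,\mu)$ is a bijection between the two dual lattices which exchanges $\|\mu\|$ and $\|\nu\|$. Under the hypothesis that the admissible modules have the same dimension, $N$ agrees in both trace formulas, so this bijection matches the contribution of $\mathbf{n}$ to $\textbf{tr}\bigl(e^{-t\Delta_{\textup{sub}}^{\Gamma_{r,s}\backslash\mathbb{G}_{r,s}}}\bigr)$ with that of $\mathbf{n}'$ to $\textbf{tr}\bigl(e^{-t\Delta_{\textup{sub}}^{\Gamma_{s,r}\backslash\mathbb{G}_{s,r}}}\bigr)$. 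Summing over the dual lattice and invoking the decomposition (\ref{GL_decomposition_of_operator_trace}) yields equality of the sub-Laplacian heat traces, and hence of the spectra with multiplicities.

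For the Laplacian I would use Proposition \ref{action of central vector fields}: on the subspace $\mathcal{F}^{(\mathbf{n})}$ the difference $\Delta^{\mathbb{G}}-\Delta_{\textup{sub}}^{\mathbb{G}}$ acts by the scalar $2\pi^2\sum_k n_k^2$, which in our coordinates equals $8\pi^2(\|\mu\|^2+\|\nu\|^2)$. This quantity is again invariant under the bijection $(\mu,\nu)\mapsto(\nu,\mu)$. Since this central part commutes with $\mathcal{D}^{(\mathbf{n})}$, the heat trace for $\Delta$ factorises termwise as $e^{-2\pi^2 t\|\mathbf{n}\|^2}\,\textbf{tr}\bigl(e^{-t\mathcal{D}^{(\mathbf{n})}}\bigr)$, and the same bijection matches corresponding terms. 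The only non-routine step is verifying the symmetry of the case (3) expression including the square-root conventions; once that is in hand, the rest is bookkeeping via the bijection of dual lattices.
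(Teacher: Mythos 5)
Your proposal is correct and follows essentially the same route as the paper: the paper deduces the corollary directly from the fact that the component traces (\ref{trace1})--(\ref{trace3}) depend on $\mathbf{n}=2(\mu+\nu)$ only through $\|\mu\|$, $\|\nu\|$ and $N$, all of which are matched under the swap $(\mu,\nu)\mapsto(\nu,\mu)$ of dual lattices when the module dimensions agree, and the Laplacian case follows since $\Delta-\Delta_{\textup{sub}}$ acts on $\mathcal{F}^{(\mathbf{n})}$ by the scalar of Proposition \ref{action of central vector fields}, which is likewise symmetric. You merely spell out details the paper leaves implicit (the sign cancellation in the case-(3) factor and the termwise factorisation $e^{-2\pi^{2}t\|\mathbf{n}\|^{2}}\,\text{{\bf tr}}\bigl(e^{-t\mathcal{D}^{(\mathbf{n})}}\bigr)$ for the Laplacian), both of which are consistent with the paper's argument.
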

The pseudo $H$-type algebra $\mathcal{N}_{r,s}$ does not depend on the chosen minimal admissible module, cf. \cite{FM}. Moreover, 
in the above determination we do not explicitly use the assumption that the admissible module is minimal. This implies:
\begin{thm}\label{multiple spectrum}
If $V$ is a sum of $k$ minimal admissible modules, 
then the heat trace in each of the cases, {\em(}\ref{trace1}{\em)}, {\em(}\ref{trace2}{\em)} 
and {\em(}\ref{trace3}{\em)} above is the $k$-th power of 
the corresponding heat trace for the manifold 
$\Gamma_{r,s}\backslash \mathbb{G}_{r,s}$.
Let $U$ be an admissible module of $C\ell_{s,r}$ with 
$\dim V=\dim U$, then the two nilmanifolds 
$\Gamma_{r,s}(V)\backslash\mathbb{G}_{r,s}(V)$ 
and $\Gamma_{s,r}(U)\backslash\mathbb{G}_{s,r}(U)$ are isospectral.
\end{thm}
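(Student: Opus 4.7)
The plan is to exploit the block–diagonal structure that the decomposition $V=V_1\oplus\cdots\oplus V_k$ into minimal admissible modules induces on $\Omega(z)$, and then to read off that each of the three trace formulas factors as a $k$-th power. Concretely, I would first choose an integral basis of $V$ compatible with the decomposition, so that if $2N$ is the dimension of a minimal admissible $C\ell_{r,s}$-module then $\{X_i\}_{i=1}^{2kN}$ restricts, on each summand $V_j$, to an integral basis of that summand in the sense of Definition \ref{integral basis}. With respect to such a basis, both the Clifford action $J_z$ and (by the formula \eqref{relation_Omega_z_J_z}) the matrix $\Omega(z)$ are block-diagonal with $k$ blocks $\Omega_{(j)}(z)$ of size $2N\times 2N$. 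Although the summands $V_j$ need not be pairwise equivalent as Clifford modules, the Lie algebra $\mathcal{N}_{r,s}$ is independent of this choice (Remark \ref{uniqueness}), and the invariants that enter the trace formula — the characteristic polynomial (Corollary \ref{Corollary_det_Omega_plus_lambda_s>0}), the dimension of $\ker\Omega(z)$ (Proposition \ref{zero solution space}), and the eigenspace dimensions (Proposition \ref{eigenspace}) — depend only on $\|\mu\|$, $\|\nu\|$ and $N$.

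Next I would match the lattice to the decomposition: the standard integral lattice $\Gamma_{r,s}(V)$ of Definition \ref{integral lattice} satisfies $\Gamma_{r,s}(V)\cap V\cong \mathbb{Z}^{2N}\oplus\cdots\oplus\mathbb{Z}^{2N}$ ($k$ copies), and the center part $\Gamma_{r,s}(V)\cap \mathbb{A}$ coincides with the one for $\Gamma_{r,s}$ since $\mathbb{A}\cong\mathbb{R}^{r,s}$ is the same space. Hence the sum over ${\bf n}\in[\Gamma_{r,s}(V)\cap\mathbb{A}]^*$ in \eqref{GL_decomposition_of_operator_trace} is identical to that for $\Gamma_{r,s}$, and only the summand $\mathrm{tr}(e^{-t\mathcal{D}^{({\bf n})}})$ changes with $k$. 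I would then verify case by case that it becomes a $k$-th power: in case \eqref{trace1}, the Gaussian sum over $\mathbb{Z}^{2kN}$ factors and the prefactor $(2\pi t)^{-kN}$ is $((2\pi t)^{-N})^k$; in case \eqref{trace2}, the block-diagonal form of $\Omega({\bf n})$ and Proposition \ref{zero solution space} yield $\mathbb{M}({\bf n})\cong \mathbb{M}_{\min}({\bf n})^{\oplus k}$, so both the Gaussian factor and the square-root determinant get raised to the $k$-th power; in case \eqref{trace3}, Corollary \ref{Corollary_det_Omega_plus_lambda_s>0} applied to the block matrix gives $\det(\Omega({\bf n})+\lambda)=\det(\Omega_{\min}({\bf n})+\lambda)^k$, which again yields the $k$-th power.

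For the final isospectrality assertion I would combine this factorization with the observation that the three formulas are symmetric under $(\|\mu\|,\|\nu\|)\leftrightarrow (\|\nu\|,\|\mu\|)$. Indeed, \eqref{trace1} does not involve $\mu,\nu$, \eqref{trace2} assumes $\|\mu\|=\|\nu\|$, and in \eqref{trace3} both the numerator $\|\mu\|^2-\|\nu\|^2$ and the factor $\sinh\{4\pi t(\|\mu\|-\|\nu\|)\}$ change sign, so the quotient is invariant. Swapping the signature $(r,s)\mapsto(s,r)$ interchanges the positive and negative parts of the dual lattice and therefore interchanges $\mu\leftrightarrow\nu$, but leaves the summand index set and every term of the heat trace intact. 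Writing $V$ and $U$ each as a sum of minimal admissible modules (with dimensions $2kN(r,s)=\dim V=\dim U=2k'N(s,r)$) and applying the $k$-th-power statement on both sides reduces the comparison of the two heat traces to this symmetry, yielding the desired isospectrality.

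The main obstacle I anticipate is the first step: verifying that one can choose an integral basis of $V$ which is simultaneously compatible with the decomposition into minimal admissible summands and with the definition of the lattice $\Gamma_{r,s}(V)$, especially in the case where $C\ell_{r,s}$ admits two non-equivalent irreducible representations and the summands $V_j$ are of mixed type. Once this combinatorial/representation-theoretic point is settled (using the construction of integral bases discussed after Theorem \ref{integral basis I} together with Remark \ref{Remark_admissible_modules_5_cases}), the rest of the proof is the routine verification of multiplicativity outlined above.
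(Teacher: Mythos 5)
Your proposal is correct, and its two ingredients --- multiplicativity of the component traces in $k$ and the symmetry of the formulas under $\|\mu\|\leftrightarrow\|\nu\|$ --- are exactly what the theorem rests on. But the paper reaches the first ingredient by a shorter route than your block-diagonalization. Its entire argument is the observation, stated just before the theorem, that nothing in Sections 5--6 uses minimality: Theorem \ref{integral basis I}, Lemma \ref{relations}, Propositions \ref{zero solution space} and \ref{eigenspace}, and Corollary \ref{Corollary_det_Omega_plus_lambda_s>0} are all proved for an \emph{arbitrary} admissible module $V$ with $\dim V=2N$, so the traces (\ref{trace1}), (\ref{trace2}), (\ref{trace3}) hold verbatim with $N$ replaced by $kN_0$ (where $2N_0$ is the minimal dimension). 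Since each formula has a pure power structure in $N$ --- a prefactor $(2\pi t)^{-N}$ times a Gaussian lattice sum that factors over $\mathbb{Z}^{2N}=(\mathbb{Z}^{2N_0})^k$, or an expression of the form $(\cdot)^{N/2}$ --- the $k$-th power claim is immediate by substitution, with no decomposition-adapted basis needed; what each approach buys is that yours makes the factorization mechanism visible block by block, while the paper's avoids your final "main obstacle" altogether, because the trace formulas depend only on $\|\mu\|$, $\|\nu\|$ and $N$ and not on which minimal summands (equivalent or not, with either sign of scalar product) make up $V$. Incidentally, that obstacle also dissolves on your route: the decomposition into minimal admissible modules is orthogonal, each $J_{Z_k}$ preserves the summands, and each summand admits an integral basis by Theorem \ref{integral basis I}, so the union of these bases is an integral basis of $V$ compatible with the decomposition, and the resulting standard lattice splits accordingly (the non-uniqueness of standard integral lattices is harmless since the traces only see $\|\mu\|$, $\|\nu\|$, $N$). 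Your treatment of the second ingredient --- the identical dual-lattice index set $2\mathbb{Z}^{r+s}$ with the roles of $\mu$ and $\nu$ interchanged, the sign cancellation between $\|\mu\|^2-\|\nu\|^2$ and $\sinh\{4\pi t(\|\mu\|-\|\nu\|)\}$ in (\ref{trace3}), and the matching $kN_0(r,s)=k'N_0(s,r)$ forced by $\dim V=\dim U$ --- coincides with the paper's reasoning behind Corollary \ref{isospectrality}.
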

\section{Isospectral, but non-diffeomorphic nilmanifolds}
\label{Section_non_diffeomorphic_isospectral}
By applying Theorem \ref{multiple spectrum} and the classification of pseudo $H$-type Lie algebras  in \cite{FM2,FM1} we detect finite families of 
{\it isospectral} but {\it mutually non-diffeomorphic} pseudo $H$-type nilmanifolds. If the module $V$ in the construction of the pseudo $H$-type Lie algebra 
is minimal admissible, then pairs of non-isomorphic pseudo $H$-type Lie algebras $\mathcal{N}_{r,s}\ncong \mathcal{N}_{s,r}$ of the 
same dimension $\dim \mathcal{N}_{r,s}= \dim \mathcal{N}_{s,r}$ are known (see \cite{FM2,FM1} or Table \ref{Table_Classification_PHLA}).  By choosing 
integral lattices $\Gamma_{r,s}$ and $\Gamma_{s,r}$ in the corresponding Lie groups $\mathbb{G}_{r,s}$ and $\mathbb{G}_{s,r}$, respectively, we first detect pairs 
\begin{equation}\label{GL_pairs_of_nilmanifolds}
M_1:=\Gamma_{r,s} \backslash \mathbb{G}_{r,s} \hspace{4ex} \mbox{\it  and} \hspace{4ex} M_2:=\Gamma_{s,r} \backslash \mathbb{G}_{s,r}
\end{equation}
of isospectral, non-diffeomorphic manifolds. The minimal dimension of such examples arise for $(r,s)=(1,3)$ in which case $\dim M_1= \dim M_2=12$. By dropping the minimality condition on 
the module we can produce many more examples.  More generally, for any $k \in \mathbb{N}$ we can find a family $M_1, \cdots, M_k$ of nilmanifolds such that for $i,j \in \{1, \cdots, k\}$:
\begin{equation}\label{GL_family_of_nilmanifolds}
M_i \: \sim_{\textup{isosp}} \: M_j \hspace{3ex} \mbox{\it and } \hspace{3ex} M_i \nsim_{\textup{diffeo}} M_j.
\end{equation} 
Here $\sim_{\textup{isosp}}$ means {\it isospectral} with respect to the sub-Laplacian and $\nsim_{\textup{diffeo}}$ indicates that two manifolds are {\it non-diffeomorphic}.  First, we explain the 
method of detecting non-diffeomorphic nilmanifolds. 
\vspace{1ex}\par 
If there is a diffeomorphism between the nilmanifolds $N_{r,s}:=\Gamma_{r,s}(V)\backslash\mathbb{G}_{r,s}(V)$ and $N_{s,r}=\Gamma_{s,r}(U)
\backslash\mathbb{G}_{s,r}(U)$, then their fundamental  groups
$$\pi_1\left(\Gamma_{r,s}(V)\backslash\mathbb{G}_{r,s}(V)\right)\cong \Gamma_{r,s}(V) 
\hspace{3ex} \mbox{and} \hspace{3ex} 
\pi_1\left(\Gamma_{s,r}(U)\backslash\mathbb{G}_{s,r}(U)\right)\cong \Gamma_{s,r}(U)$$ 
are isomorphic. We can apply the following general fact from \cite{Ra}:
\begin{prop}\label{Proposition_Raghunathan}
Any isomorphism between lattices in simply connected nilpotent Lie groups can be extended to an isomorphism between the whole groups. 
\end{prop}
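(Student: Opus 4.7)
The plan is to reconstruct $\mathbb{G}_i$ from its lattice $\Gamma_i$ via the Mal\'cev completion and then extend the given isomorphism to this completion. Let $\phi\colon \Gamma_1 \to \Gamma_2$ be an isomorphism of lattices in simply connected nilpotent Lie groups $\mathbb{G}_1$ and $\mathbb{G}_2$ with Lie algebras $\mathfrak{g}_1$ and $\mathfrak{g}_2$. Recall that $\exp\colon \mathfrak{g}_i \to \mathbb{G}_i$ is a diffeomorphism and that in exponential coordinates the group product is given by the Baker--Campbell--Hausdorff (BCH) Lie polynomial. Mal\'cev's theorem, already cited in the excerpt, guarantees that $\log(\Gamma_i)$ spans a $\mathbb{Q}$-form $\mathfrak{g}_{i,\mathbb{Q}}$ of $\mathfrak{g}_i$ with rational structure constants.

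First I would introduce the rational Mal\'cev hull
\[
\widetilde{\Gamma}_i := \bigl\{\, g \in \mathbb{G}_i \, : \, g^n \in \Gamma_i \text{ for some } n \in \N \,\bigr\}.
\]
Since $\mathbb{G}_i$ is simply connected and nilpotent, it is uniquely divisible and torsion-free: any $n$-th root, if it exists, is unique because it must lie on the one-parameter subgroup through the given element. Hence $\widetilde{\Gamma}_i$ is a divisible torsion-free nilpotent group that coincides with $\exp(\mathfrak{g}_{i,\mathbb{Q}})$ as a subset of $\mathbb{G}_i$. For $g \in \widetilde{\Gamma}_1$ with $g^n = \gamma \in \Gamma_1$, define $\widetilde{\phi}(g)$ to be the unique $n$-th root of $\phi(\gamma)$ in $\widetilde{\Gamma}_2$; independence of the choice of $n$ and multiplicativity of $\widetilde{\phi}$ follow from the uniqueness of roots combined with the fact that $\phi$ is a homomorphism, producing a group isomorphism $\widetilde{\phi}\colon \widetilde{\Gamma}_1 \to \widetilde{\Gamma}_2$.

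Next, I would transport $\widetilde{\phi}$ through the exponential map to obtain $\Phi_{\mathbb{Q}} := \log \circ \, \widetilde{\phi} \circ \exp \colon \mathfrak{g}_{1,\mathbb{Q}} \to \mathfrak{g}_{2,\mathbb{Q}}$. Since BCH is a universal Lie polynomial with rational coefficients, the fact that $\widetilde{\phi}$ respects the group law, together with the rational divisibility above, forces $\Phi_{\mathbb{Q}}$ to respect both addition (using $\exp(\tfrac{p}{q}X)$ as the unique $q$-th root of $\exp(pX)$) and the Lie bracket (inverting the quadratic term of BCH to express $[X,Y]$ in terms of the group commutator $\exp(X)\exp(Y)\exp(-X)\exp(-Y)$ modulo higher-order nilpotent corrections, then inducting on the nilpotency step). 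Extending $\Phi_{\mathbb{Q}}$ by $\R$-linearity, which is possible because $\mathfrak{g}_{i,\mathbb{Q}}$ is dense in $\mathfrak{g}_i$ and the bracket is continuous, yields a real Lie algebra isomorphism $\Phi\colon \mathfrak{g}_1 \to \mathfrak{g}_2$. Exponentiating $\Phi$ produces the desired Lie group isomorphism $\mathbb{G}_1 \to \mathbb{G}_2$ whose restriction to $\Gamma_1$ equals $\phi$.

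The main obstacle is showing that $\Phi_{\mathbb{Q}}$ is actually a $\mathbb{Q}$-Lie algebra homomorphism rather than merely a map of sets, i.e.\ extracting additivity and bracket compatibility from the single hypothesis that $\widetilde{\phi}$ preserves the polynomial group product. This rests on the uniqueness of rational roots in torsion-free nilpotent groups (so that $\widetilde{\phi}$ automatically commutes with the operations $g \mapsto g^{p/q}$) together with an induction on the nilpotency class of $\mathfrak{g}_i$, using BCH level by level to isolate the contribution of each bracket degree. Once $\Phi_{\mathbb{Q}}$ is known to be a Lie algebra map, the subsequent $\R$-linear extension and exponentiation are routine.
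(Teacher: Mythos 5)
The paper offers no proof of this proposition at all: it is quoted as a ``general fact'' with a citation to Raghunathan's book \cite{Ra} (it is Theorem 2.11 there), so there is no internal argument to compare against. What you have written is a correct outline of exactly the classical Mal\'cev rigidity argument that underlies the cited result: pass to the divisible hull $\widetilde{\Gamma}_i=\exp(\mathfrak{g}_{i,\mathbb{Q}})$, extend $\phi$ by unique root extraction, transport through $\log$, verify that the resulting map is a $\mathbb{Q}$-Lie algebra homomorphism by inverting the BCH series degree by degree, then extend scalars and integrate. You also identify honestly, rather than hide, the three points where the real work sits: the equality $\widetilde{\Gamma}_i=\exp(\mathfrak{g}_{i,\mathbb{Q}})$ (some power of every element of $\exp(\mathfrak{g}_{i,\mathbb{Q}})$ lands in $\Gamma_i$), the multiplicativity of the root extension $\widetilde{\phi}$ (which does \emph{not} follow from uniqueness of roots alone, since $(gh)^m$ is a nontrivial word in $g$ and $h$; it needs the inverse-BCH induction you describe, or commensurability of $\langle\Gamma_1,g,h\rangle$ with $\Gamma_1$), and the additivity/bracket compatibility of $\Phi_{\mathbb{Q}}$. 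For comparison, Raghunathan's own proof packages these steps differently: one chooses a Mal\'cev basis adapted to $\Gamma_1$, observes that the group law and the candidate extension are given in these coordinates by polynomials with rational coefficients, and deduces the homomorphism identities on all of $\mathbb{G}_1$ from their validity on the integer points by Zariski density of $\mathbb{Z}^n$ in $\mathbb{R}^n$ --- this sidesteps proving multiplicativity of $\widetilde{\phi}$ directly and is arguably the shorter route, whereas your version stays closer to Mal\'cev's original group-theoretic completion and generalizes verbatim to abstract finitely generated torsion-free nilpotent groups. One small imprecision: the $\mathbb{R}$-linear extension of $\Phi_{\mathbb{Q}}$ exists because $\mathfrak{g}_{1,\mathbb{Q}}$ is a $\mathbb{Q}$-form of $\mathfrak{g}_1$ (a $\mathbb{Q}$-basis is an $\mathbb{R}$-basis), not because of density; density and continuity of the bracket give uniqueness and the Lie algebra property of the extension, as you then use correctly.
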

From these observations we conclude: 
\begin{cor}\label{corollary_non_diffeomorphic_nilmanifolds}
If the nilmanifolds $N_{r,s}$ and $N_{s,r}$ are diffeomorphic, then $\mathcal{N}_{r,s}(V)$ and $\mathcal{N}_{s,r}(U)$ have to be isomorphic as Lie algebras. 
\end{cor}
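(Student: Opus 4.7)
The plan is to chain together three well-known facts: diffeomorphism implies isomorphism of fundamental groups, Proposition \ref{Proposition_Raghunathan} upgrades the lattice isomorphism to a Lie group isomorphism, and finally a Lie group isomorphism passes to the Lie algebra level. None of the steps should be hard; the corollary is really just a bookkeeping statement that packages the above three facts in the context of pseudo $H$-type nilmanifolds.

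First, I would observe that $N_{r,s}$ and $N_{s,r}$ are both compact $K(\pi,1)$ spaces whose universal covers $\mathbb{G}_{r,s}(V)$ and $\mathbb{G}_{s,r}(U)$ are diffeomorphic (as manifolds) to Euclidean spaces via the exponential map, since these are simply connected nilpotent Lie groups. In particular they are aspherical, and their fundamental groups are precisely the deck transformation groups, namely the lattices $\Gamma_{r,s}(V)$ and $\Gamma_{s,r}(U)$ acting by left translation. A diffeomorphism $\Phi: N_{r,s}\to N_{s,r}$ induces an isomorphism $\Phi_\ast: \Gamma_{r,s}(V)\to \Gamma_{s,r}(U)$ at the level of fundamental groups.

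Next, I apply Proposition \ref{Proposition_Raghunathan}: the abstract group isomorphism $\Phi_\ast$ between the two lattices extends to a (smooth) isomorphism of simply connected nilpotent Lie groups
\[
\widetilde{\Phi}: \mathbb{G}_{r,s}(V)\longrightarrow \mathbb{G}_{s,r}(U).
\]
Finally, differentiating $\widetilde{\Phi}$ at the identity gives a Lie algebra isomorphism $d\widetilde{\Phi}_e: \mathcal{N}_{r,s}(V)\to \mathcal{N}_{s,r}(U)$, which is exactly the conclusion.

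The only non-trivial ingredient is Proposition \ref{Proposition_Raghunathan}, which is quoted from \cite{Ra} and is the standard rigidity theorem for lattices in simply connected nilpotent Lie groups. Everything else is formal: the identification $\pi_1(\Gamma\backslash\mathbb{G})\cong \Gamma$ is the usual covering space calculation (both groups being simply connected and hence giving the universal covering), and the passage from a Lie group isomorphism to a Lie algebra isomorphism is automatic via the derivative (or, equivalently, through the exponential map which is a global diffeomorphism in the simply connected nilpotent setting). So no genuine obstacle arises; the content of the corollary is really just to isolate the criterion that will drive the non-diffeomorphism arguments in Section \ref{Section_non_diffeomorphic_isospectral}, where non-isomorphism of pseudo $H$-type Lie algebras from the classification of \cite{FM2,FM1} is used to rule out diffeomorphisms between the corresponding nilmanifolds.
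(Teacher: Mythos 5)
Your proposal is correct and follows exactly the paper's route: a diffeomorphism gives an isomorphism of the fundamental groups $\pi_1(N_{r,s})\cong \Gamma_{r,s}(V)$ and $\pi_1(N_{s,r})\cong \Gamma_{s,r}(U)$, Proposition \ref{Proposition_Raghunathan} extends this lattice isomorphism to an isomorphism of the simply connected groups, and differentiating at the identity yields the Lie algebra isomorphism. Your additional remarks (asphericity, exponential map as global diffeomorphism) are accurate but not needed beyond what the paper itself uses.
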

\subsection{Pairs of non-diffeomorphic, isospectral nilmanifolds via minimal admissible modules}
\label{subsection?7-1}
To obtain pairs $(M_1=N_{r,s}, M_2=N_{s,r})$ of nilmanifolds with (\ref{GL_family_of_nilmanifolds}) we determine pairs $(r,s)$ such that $\text{dim}\:N_{r,s}=\text{dim}\:N_{s,r}$, but 
$N_1\ncong N_2$. The classification of pseudo $H$-type algebras in {\sc Table \ref{t:step1}} constructed from minimal admissible modules was obtained in \cite{FM2,FM1}. 
\begin{table}[]
\label{Table_Classification_PHLA}
\center
\caption{{\it Classification of pseudo $H$-type Lie algebras defined via minimal admissible modules.}
{\bf Notation}: If $V_{\textup{min}}^{r,s}$ denotes a minimal admissible $C\ell_{r,s}$-module,  then the letter `d' = {\it double} (or `h'={\it half}, respectively) at the position $(r,s)$ means 
that $\dim V_{\textup{min}}^{r,s}= 2 \dim V_{\textup{min}}^{s,r}$ (or $\dim V_{\textup{min}}^{r,s}= 1/2 \dim V_{\textup{min}}^{s,r}$, respectively).  The symbol `$\cong$' indicates that the 
Lie algebras $\mathcal{N}_{r,s}$ and $\mathcal{N}_{s,r}$ are isomorphic while `$\not \cong$' means that they are non-isomorphic.}
\begin{tabular}{|c||c|c|c|c|c|c|c|c|c|}
\hline
8&$\cong$&$\cong$&$\cong$&h&&&&&
\\
\hline
7&d&d&d&$\not\cong$&&&&&
\\
\hline
6&d&$\cong$&$\cong$&h&${\cong}$&&&&
\\
\hline
5&d&$\cong$&$\cong$&h&${\cong}$&$ $&&&
\\
\hline
4&$\cong$&h&h&h&$ $&${\cong}$&${\cong}$&&\\\hline
3&d&$\not\cong$&$\not\cong$&$ $&d&d&d&$\not\cong$&d\\\hline
2&$\cong$&h&$ $&$\not\cong$&d&$\cong$&$\cong$&h&$\cong$\\\hline
1&$\cong$&$ $&d&$\not\cong$&d&$\cong$&$\cong$&h&$\cong$\\\hline
0&&$\cong$&$\cong$&h&$\cong$&h&h&h&$\cong$\\\hline\hline
$s/r$&0&1&2&3&4&5&6&7&8\\
\hline
\end{tabular}\label{t:step1}
\end{table}
This table gives us only information about the cases $0\leq r,s\leq 8$. For the remaining cases we use the following periodicity:  
\begin{lem}\label{Periodicity_pseudo_H_type_lemma}
For $(\mu,\nu)\in \{(8,0),(0,8)\}\text{ mod }8$ and $(\mu,\nu)=(4,4)\text{ mod }4$ we have
$$N_{r,s}\cong N_{s,r}\text{ iff } N_{r+\mu,s+\nu}\cong N_{s+\nu,r+\mu}.$$
\end{lem}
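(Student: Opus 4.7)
The plan is to exploit Bott periodicity of real Clifford algebras, which yields
\[
C\ell_{r+8,s} \cong C\ell_{r,s+8} \cong C\ell_{r+4,s+4} \cong C\ell_{r,s} \otimes \R(16),
\]
because $C\ell_{8,0} \cong C\ell_{0,8} \cong C\ell_{4,4} \cong \R(16)$ as real associative algebras. In each case the shift by $(\mu,\nu)\in\{(8,0),(0,8),(4,4)\}$ corresponds, via a standard tensor-product identification, to enlarging the Clifford algebra by a single $16\times 16$ matrix factor. Crucially, this shift is symmetric under swapping $(r,s)\leftrightarrow(s,r)$, since $C\ell_{\mu,\nu}\cong C\ell_{\nu,\mu}$ for each of our three exceptional pairs.

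First I would build a correspondence at the level of admissible modules. Fix an admissible $C\ell_{\mu,\nu}$-module $(W,\langle\cdot,\cdot\rangle_W)$ of dimension $16$, and given a minimal admissible $C\ell_{r,s}$-module $(V,\langle\cdot,\cdot\rangle_V)$, equip $V\otimes W$ with the tensor-product scalar product and with the Clifford action inherited from $C\ell_{r+\mu,s+\nu}\cong C\ell_{r,s}\otimes C\ell_{\mu,\nu}$. A direct verification of conditions (\ref{AdMo1}) and (\ref{AdMo2}) shows that $V\otimes W$ is an admissible $C\ell_{r+\mu,s+\nu}$-module, and a dimension count combined with Remark \ref{uniqueness} identifies it with a minimal admissible module up to isomorphism.

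Next I would transport Lie algebra isomorphisms across this tensoring operation. Using the defining relation (\ref{bracket definition}), any isomorphism $\varphi\colon\mathcal{N}_{r,s}(V)\to\mathcal{N}_{s,r}(U)$ extends, by $\varphi\otimes\mathrm{id}_W$ on the horizontal layer and by the obvious identification $\R^{r,s}\oplus\R^{\mu,\nu}\cong\R^{r+\mu,s+\nu}$ on the center, to an isomorphism $\mathcal{N}_{r+\mu,s+\nu}(V\otimes W)\to\mathcal{N}_{s+\nu,r+\mu}(U\otimes W')$, where $W'$ is the corresponding admissible $C\ell_{\nu,\mu}$-module. Together with Remark \ref{uniqueness}, this yields the forward implication of the biconditional; the symmetry of the construction under $(\mu,\nu)\leftrightarrow(\nu,\mu)$ is precisely what makes the two legs of the periodicity match.

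For the converse I would show that any isomorphism $\mathcal{N}_{r+\mu,s+\nu}\cong\mathcal{N}_{s+\nu,r+\mu}$ respects the tensor-product decomposition and therefore descends to an isomorphism $\mathcal{N}_{r,s}\cong\mathcal{N}_{s,r}$. The main obstacle will be precisely this rigidity: one must exclude the possibility that an isomorphism mixes the $\R(16)$ factor with the rest in a non-trivial way. I would address this either by analysing the automorphism group of the shared $\R(16)$ factor and its action on the Clifford module directly, or, more economically, by invoking the classification of \cite{FM1,FM2}, which already organises the isomorphism classes of $\mathcal{N}_{r,s}$ into a pattern that is $(8,0)$-, $(0,8)$- and $(4,4)$-periodic, so that the lemma reduces to the compatibility of Table \ref{t:step1} with Bott periodicity.
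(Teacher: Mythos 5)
The paper itself offers no argument for this lemma: it is stated as a direct import of the periodicity theorems in the classification papers \cite{FM1,FM2}, which is exactly the ``more economical'' fallback you name in your last sentence. So to the extent that your proof ends by ``invoking the classification of \cite{FM1,FM2}, which already organises the isomorphism classes \dots into a pattern that is $(8,0)$-, $(0,8)$- and $(4,4)$-periodic,'' you have reproduced the paper's treatment, and the tensor-product machinery preceding it is superfluous. The interesting question is whether your independent route works, and there it has one repairable inaccuracy and one genuine gap.

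The inaccuracy is in the ``standard tensor-product identification.'' Under $C\ell_{r+\mu,s+\nu}\cong C\ell_{r,s}\otimes C\ell_{\mu,\nu}$ the generators of the two factors must \emph{anticommute} on $V\otimes W$, whereas $J_z\otimes \mathrm{id}_W$ and $\mathrm{id}_V\otimes J_w$ commute; the correct action twists one factor by the volume element $\omega_W$ of $C\ell_{\mu,\nu}$ (e.g.\ $z\mapsto J_z\otimes\omega_W$), and one must then verify that the relations (\ref{AdMo1}) and the skew-symmetry (\ref{AdMo2}) survive the twist for the \emph{neutral} scalar product on $V$ (recall that for $s>0$ the admissible scalar product has signature $(N,N)$, so sign bookkeeping is not automatic). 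Likewise, the bracket on $\mathcal{N}_{r+\mu,s+\nu}(V\otimes W)$ determined by (\ref{bracket definition}) is not the product of the brackets on the factors --- the $\mathbb{R}^{r,s}$-component of $[X\otimes w, Y\otimes w']$ carries a factor $\langle \omega_W w, w'\rangle_W$ --- so ``$\varphi\otimes\mathrm{id}_W$ is an isomorphism'' is a computation you still owe, though a feasible one. The genuine gap is the converse implication. An abstract Lie algebra isomorphism $\mathcal{N}_{r+\mu,s+\nu}\to\mathcal{N}_{s+\nu,r+\mu}$ preserves only the center and the induced graded structure; it carries no memory of a chosen factorization $V\otimes W$ of the horizontal layer, and the automorphism group of the larger algebra (containing $O$-type rotations of the center and Pin-type actions on the module) mixes any such factorization. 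Your proposed rigidity statement --- that every isomorphism ``respects the tensor-product decomposition and therefore descends'' --- is unsubstantiated and is precisely the hard content of the classification in \cite{FM1,FM2}; without it you have proved only the forward implication, not the stated ``iff.'' To close the converse independently you would need an isomorphism invariant of $\mathcal{N}_{r,s}$ versus $\mathcal{N}_{s,r}$ that provably propagates to $\mathcal{N}_{r+\mu,s+\nu}$ versus $\mathcal{N}_{s+\nu,r+\mu}$, and no such invariant is proposed.
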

From Corollary \ref{corollary_non_diffeomorphic_nilmanifolds} and {\sc Table \ref{t:step1}} we see that for $(r,s)\in\{(3,1),(3,2),(3,7)\}$ both nilmanifolds $N_{r,s},N_{s,r}$ 
have the same dimension but they are non-diffeomorphic. 
\vspace{1mm}\par 
Hence we obtain the following result:
\begin{cor}\label{Corollary_non-diffeomorphic_isospectral_first_method}
The following pairs of nilmanifolds are isospectral and non-diffeomorphic:
\begin{enumerate}
\item $\left(N_{r,s},N_{s,r}\right)$ for $r\equiv 3 \: \textup{mod}\: 8$ and $s\equiv 1,2,7 \: \textup{mod} \: 8$.
\item $\left(N_{r+4k,s+4k},N_{s+4k,r+4k}\right)$ for $(r,s)\in\{(3,1),(3,2),(3,7)\}$ and $k\in\mathbb{N}_0.$
\end{enumerate}
\end{cor}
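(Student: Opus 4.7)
The plan is to decouple the claim into two parts: establishing isospectrality via Theorem \ref{multiple spectrum} and establishing non-diffeomorphism via Corollary \ref{corollary_non_diffeomorphic_nilmanifolds}. For each listed pair $(N_{r,s}, N_{s,r})$, isospectrality will follow immediately once we check that the underlying admissible $C\ell_{r,s}$- and $C\ell_{s,r}$-modules (taken here to be minimal) have the same dimension, since Theorem \ref{multiple spectrum} then produces identical heat traces. Non-diffeomorphism will follow once we know that $\mathcal{N}_{r,s}$ and $\mathcal{N}_{s,r}$ are non-isomorphic as Lie algebras, via the contrapositive of Corollary \ref{corollary_non_diffeomorphic_nilmanifolds}.

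The next step is to settle the three base cases $(r,s)\in\{(3,1),(3,2),(3,7)\}$. All three pairs carry the marker $\not\cong$ in Table \ref{t:step1}, which by the legend means that $\dim V_{\min}^{r,s}=\dim V_{\min}^{s,r}$ while $\mathcal{N}_{r,s}\not\cong \mathcal{N}_{s,r}$. Consequently $N_{r,s}$ and $N_{s,r}$ share both the same dimension and the sub-Laplace heat trace data required by Theorem \ref{multiple spectrum}, yet Corollary \ref{corollary_non_diffeomorphic_nilmanifolds} forbids any diffeomorphism between them. No computation is needed here beyond reading off Table \ref{t:step1}.

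To obtain the general statement (1), I would apply the periodicity Lemma \ref{Periodicity_pseudo_H_type_lemma} with shift vectors $(\mu,\nu)=(8,0)$ and $(\mu,\nu)=(0,8)$ iteratively, starting from each base case $(3,s)$ with $s\in\{1,2,7\}$. Since the lemma asserts that $N_{r,s}\cong N_{s,r}$ if and only if $N_{r+\mu,s+\nu}\cong N_{s+\nu,r+\mu}$, non-isomorphism (hence non-diffeomorphism) propagates under these shifts, giving all $r\equiv 3\pmod 8$ and $s\equiv 1,2,7\pmod 8$. For (2), I would apply the same lemma with $(\mu,\nu)=(4,4)$, which shifts along the diagonal and preserves the defect between $\mathcal{N}_{r,s}$ and $\mathcal{N}_{s,r}$ at each step. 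In both cases, dimension equality of the minimal admissible modules is also transported by the periodicity (one can either cite it from the classification or verify it through the standard mod-8 table of $\dim V_{\min}^{r,s}$).

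The only delicate point I foresee is making sure that the periodicity lemma is applied to an invariant that is exactly what we need, namely the joint statement ``same module dimension \emph{and} non-isomorphism'', rather than just one of these. Since Lemma \ref{Periodicity_pseudo_H_type_lemma} is stated for the isomorphism relation and the classification table records dimensions as d/h markers which themselves obey the same Bott-type periodicity, this bookkeeping is routine but worth doing explicitly. Once that is in place, the corollary is an immediate combination of Theorem \ref{multiple spectrum}, Corollary \ref{corollary_non_diffeomorphic_nilmanifolds}, Table \ref{t:step1}, and Lemma \ref{Periodicity_pseudo_H_type_lemma}.
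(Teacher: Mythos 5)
Your proposal is correct and follows essentially the same route as the paper: base cases $(3,1),(3,2),(3,7)$ read off from Table \ref{t:step1}, non-diffeomorphism via Corollary \ref{corollary_non_diffeomorphic_nilmanifolds}, isospectrality via Theorem \ref{multiple spectrum} (equivalently Corollary \ref{isospectrality}), and propagation by the periodicity Lemma \ref{Periodicity_pseudo_H_type_lemma}. Your explicit remark that equality of minimal-module dimensions must also be transported under the $(8,0)$, $(0,8)$ and $(4,4)$ shifts is a point the paper leaves implicit, and is a welcome bit of added care rather than a deviation.
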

\subsection{Finite families of non-diffeomorphic, isospectral nilmanifolds}
\label{subsection?7-2}
In case the module $V$ in the construction of the Lie algebra $\mathcal{N}_{r,s}(V)$ is not minimal admissible we can use the classification result in 
\cite[Theorem 4.1.2 and Theorem 4.1.3]{FM2} to determine families $\{M_1, \cdots, M_k\}$ of a given lenght $k \in \mathbb{N}$ of isospectral, mutually non-diffeomorphic nilmanifolds, i.e. (\ref{GL_family_of_nilmanifolds}) holds. 
First, we fix the pair $(r,s)$ and study the Lie algebra $\mathcal{N}_{r,s}(U)$, constructed from different admissible modules. In the general case the classification of isomorphic pseudo $H$-type 
Lie algebras is more subtle and to state the result we need to introduce some notation from \cite{FM2}. Note that for any given minimal admissible module $\{ J,V, \langle \bullet, \bullet \rangle_V \}$ 
also the module $\{ J,V, -\langle \bullet, \bullet \rangle_V \}$ is minimal admissible. The upper index in the notation $V_{\textup{min};\pm}^{r,s;\pm}$ indicate that the scalar product 
of the two minimal admissible modules  $V_{\textup{min};\pm}^{r,s;+}$ and  $V_{\textup{min};\pm}^{r,s;-}$ differ by a sign. 
\begin{thm}[{K. Furutani, I. Markina}, \cite{FM2}]\label{Theorem_classification_II_1_case}
Let $r\equiv 3 \: \textup{mod} \: 4$, $s\equiv 1,2,3 \: \textup{mod}\: 4$ and $U,\widetilde{U}$ be admissible modules decomposed into the direct sums: 
$$U=\left(\bigoplus^{p^+} V_{\textup{min}}^{r,s;+}\right)\bigoplus\left(\bigoplus^{p^-}V_{\textup{min}}^{r,s;-}\right),$$
$$\widetilde{U}=\left(\bigoplus^{\widetilde{p}^+} V_{\textup{min}}^{r,s;+}\right)\bigoplus\left(\bigoplus^{\widetilde{p}^-}V_{\textup{min}}^{r,s;-}\right).$$
Then the Lie algebras $\mathcal{N}_{r,s}(U), \mathcal{N}_{r,s}(\widetilde{U})$ are isomorphic, if and only if:
$$\Big{[}p^+=\widetilde{p}^+ \hspace{1ex} \text{ and }\hspace{1ex}  p^-=\widetilde{p}^- \Big{]} \hspace{3ex} \mbox{or} \hspace{3ex}  
\Big{[}p^+=\widetilde{p}^- \hspace{1ex} \text{ and }\hspace{1ex} p^-=\widetilde{p}^+\Big{]}.$$
\end{thm}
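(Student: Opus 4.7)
The plan is to translate the Lie algebra isomorphism question into an equivalence problem for admissible $C\ell_{r,s}$-modules and then invoke the classification of minimal admissible modules in the stated range. The bracket relation $\langle J_z X, Y \rangle_V = \langle z, [X,Y] \rangle_{r,s}$ shows that, once the scalar products on $V$ and on $\mathbb{R}^{r,s}$ are fixed, the Lie algebra encodes the admissible module structure and conversely. I write $V^\pm$ for $V_{\min}^{r,s;\pm}$, keeping in mind that these share an underlying vector space and Clifford action but carry scalar products of opposite sign.

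For the \emph{sufficiency} direction, the case $(p^+,p^-) = (\widetilde{p}^+,\widetilde{p}^-)$ is immediate: a scalar-product preserving equivalence of admissible modules lifts to a Lie algebra isomorphism. For the swap case $(p^+,p^-) = (\widetilde{p}^-,\widetilde{p}^+)$, I would construct an explicit involutive Lie algebra automorphism that exchanges the $V^+$ and $V^-$ summands. The natural candidate pairs the canonical linear identification between the underlying vector spaces of $V^+$ and $V^-$ with the central reflection $Z \mapsto -Z$ on $\mathbb{R}^{r,s}$; the sign on the center precisely compensates for the opposite sign of the scalar product distinguishing $V^+$ from $V^-$, so that the bracket identity is preserved. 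Applying this component-wise on general direct sums yields the required isomorphism.

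For the \emph{necessity} direction, let $\Phi : \mathcal{N}_{r,s}(U) \to \mathcal{N}_{r,s}(\widetilde{U})$ be a Lie algebra isomorphism. Since both algebras satisfy $[\mathcal{N},\mathcal{N}] = \text{center}$, $\Phi$ restricts to an isomorphism $\Phi_c$ on the centers and induces a quotient map $\Phi_h : U \to \widetilde{U}$. After a normalization step, adjusting $\Phi$ by suitable rescalings and indefinite Gram--Schmidt-type orthogonalizations, one may assume $\Phi_c$ is a pseudo-isometry of $(\mathbb{R}^{r,s},\langle \cdot,\cdot\rangle_{r,s})$ and $\Phi_h$ respects $\langle \cdot,\cdot\rangle_V$ compatibly, possibly up to a common sign. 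The bracket identity then translates into the intertwining relation
\[
\widetilde{J}_{\Phi_c(z)} \circ \Phi_h \;=\; \Phi_h \circ J_z, \qquad z \in \mathbb{R}^{r,s}.
\]
Now I invoke the classification from \cite{FM2,FM1}: for $r \equiv 3 \pmod 4$ and $s \equiv 1,2,3 \pmod 4$, there are exactly two inequivalent minimal admissible $C\ell_{r,s}$-modules $V^\pm$, distinguished by the sign of their scalar product. The intertwining relation, combined with Schur-type uniqueness of the isotypic decomposition, forces $\Phi_h$ either to preserve the decomposition of $U$ into $V^+$ and $V^-$ isotypic parts (giving $p^\pm = \widetilde{p}^\pm$) or to exchange them (giving $p^\pm = \widetilde{p}^\mp$), with no mixed possibility available.

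The main obstacle I anticipate is the \emph{normalization step} in the necessity direction. A generic Lie algebra isomorphism need not preserve either of the auxiliary scalar products, and one must verify that it can always be composed with appropriate automorphisms so that the adjusted map becomes a pseudo-isometry of both $\langle \cdot,\cdot\rangle_{r,s}$ and $\langle \cdot,\cdot\rangle_V$. Showing that this adjustment is always possible, and that the propagation of the resulting sign choices, through $\det \Phi_c$, through the sign flip between $V^+$ and $V^-$, and through the scalar product transported by $\Phi_h$, yields exactly the two cases stated in the theorem and no others, is the delicate technical core of the argument.
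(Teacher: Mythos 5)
First, a point of reference: the paper itself contains no proof of this statement --- it is quoted verbatim from Furutani--Markina \cite{FM2}, so your proposal can only be compared with that external argument, not with anything internal to this paper. Your \emph{sufficiency} half is essentially correct: since $V_{\textup{min}}^{r,s;+}$ and $V_{\textup{min}}^{r,s;-}$ share the same underlying vector space and Clifford action and differ only in the sign of the scalar product, the defining relation $\langle J_z X, Y\rangle_V=\langle z,[X,Y]\rangle_{r,s}$ shows that reversing the scalar product reverses the bracket, so the map which is the identity on the module part and $-\operatorname{Id}$ on the center is a Lie algebra isomorphism $\mathcal{N}_{r,s}(U(p^+,p^-))\to\mathcal{N}_{r,s}(U(p^-,p^+))$; note only that the central reflection is necessarily global, flipping all minimal summands at once, which is exactly what the swap case requires.

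The \emph{necessity} direction, however, contains a genuine gap --- in fact two related ones. (i) Your ``Schur-type'' step cannot work as stated: $V_{\textup{min}}^{r,s;+}$ and $V_{\textup{min}}^{r,s;-}$ are \emph{equivalent} as $C\ell_{r,s}$-modules. The paper is explicit that the upper index $\pm$ records only the sign of the admissible scalar product (two inequivalent irreducible representations are a different phenomenon, recorded by the \emph{lower} index in Theorem \ref{Theorem_classification_II_2_case}). Schur's lemma separates inequivalent representations, so the intertwining relation $\widetilde{J}_{\Phi_c(z)}\circ\Phi_h=\Phi_h\circ J_z$ imposes no constraint whatsoever distinguishing $V^+$-summands from $V^-$-summands; the only available invariant is the scalar product itself, and how a general, non-isometric isomorphism transports it is precisely what you have not controlled. (ii) That control is your ``normalization step,'' which you assert (``one may assume $\Phi_c$ is a pseudo-isometry and $\Phi_h$ respects $\langle\cdot,\cdot\rangle_V$ up to a common sign'') but do not prove, and it is not a routine rescaling-plus-Gram--Schmidt adjustment: it is the entire content of the theorem. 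The telltale symptom is that your argument nowhere uses the hypotheses $r\equiv 3$ and $s\equiv 1,2,3 \pmod 4$. For other congruence classes the conclusion is \emph{false}: the companion results in \cite{FM2} (compare also Table \ref{t:step1}) show signatures in which the isomorphism class of $\mathcal{N}_{r,s}(U)$ depends only on $\dim U$, so that modules with different $(p^+,p^-)$ but equal total dimension yield isomorphic algebras --- essentially because in those signatures the commutant of the Clifford action contains an intertwiner reversing the sign of the scalar product, which converts a $V^+$ summand into a $V^-$ summand without touching the center. The actual proof in \cite{FM2} consists in showing, by a case-by-case analysis of the Clifford commutant, that exactly for the stated residues no such bracket-compatible intertwiner exists. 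Since your sketch would apply verbatim to all $(r,s)$, it proves too much and therefore cannot be completed as written.
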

Let $R \in \mathbb{N}$ be fixed. If we consider admissible modules of the form 
 $$U(p,q):=\left(\bigoplus^{p} V_{\textup{min}}^{r,s;+}\right)\bigoplus\left(\bigoplus^{q}V_{\textup{min}}^{r,s;-}\right), \hspace{2ex} p+q=R, $$
 then we obtain non-isomorphic Lie algebras $\mathcal{N}_{r,s}(U(p_1,q_1))$ and $\mathcal{N}_{r,s}(U(p_2,q_2))$ of the same dimension 
 $R\cdot \dim V_{min}^{r,s;+}$ if simultaneously $(p_1,q_1) \ne (p_2,q_2)$ and $(p_1,q_1) \ne (q_2, p_2)$. 
 \vspace{1mm} \par 
 We fix an integer $k$ and determine all pairs of integers $(p_i,q_i)$ with the properties: 
 \begin{enumerate}
 \item $p_i\leq q_i$ for all $i.$
 \item  $p_i+q_i=k$ for all $i.$
 \item $(p_i,q_i)\neq (p_j,q_j)$ and $(p_i,q_i)\neq (q_j,p_j)$ for $i\neq j.$
 \end{enumerate}
With such pairs we define: 
 $$U_i:=\left(\bigoplus^{p_i} V_{\textup{min}}^{r,s;+}\right)\bigoplus\left(\bigoplus^{q_i}V_{\textup{min}}^{r,s;-}\right).$$
 \par 
From Theorem \ref{Theorem_classification_II_1_case} and the above remark we conclude that the Lie algebras $\mathcal{N}_{r,s}(U_i)$ and $\mathcal{N}_{r,s}(U_j)$ are 
mutually non-isomorphic for $r\equiv 3\:  \textup{mod} \:4$, $s\equiv 1,2,3 \: \textup{mod}\: 4$. In order to present a concrete family of nilmanifolds with the required properties 
we choose $r=3$ and $s=1$ such that $\text{dim}\:V_{\textup{min}}^{3,1}=8$. Let $k$ be even and choose $m\in \mathbb{N}$ such that $k=2m$. 
 The pairs $(p_i,q_i)$ with (a) - ({c}) are of the form $\{(i,k-i) \: | \: 0\leq i\leq m\}$ and we conclude:
 \begin{cor}
 The following $m+1$ nilmanifolds 
 $$\big{(}\Gamma_{3,1}\backslash\mathbb{G}_{3,1}(U_i) \big{)}_{0\leq i\leq m}$$ 
 are isospectral, but mutually non-diffeomorphic with respect to the (sub)-Laplacian. 
 \end{cor}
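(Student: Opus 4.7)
The plan is to handle the two assertions—isospectrality and mutual non-diffeomorphism—as independent steps, using the classification machinery already established.

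For the isospectral part, I would observe that each $U_i$ is by construction a direct sum of exactly $k = p_i + q_i = 2m$ minimal admissible $C\ell_{3,1}$-modules, regardless of how they are distributed between $V^{3,1;+}_{\textup{min}}$ and $V^{3,1;-}_{\textup{min}}$. The heat trace contributions (\ref{trace1})–(\ref{trace3}) depend only on $(r,s)$ and on the half-dimension $N$ (encoded through Corollary \ref{eigenvalues} and Proposition \ref{eigenspace}), not on the sign of the scalar product on the module. Consequently an appeal to Theorem \ref{multiple spectrum} yields
\[
\mathbf{tr}\bigl(e^{-t\mathcal{D}^{(\mathbf{n})}_{U_i}}\bigr) = \Bigl[\mathbf{tr}\bigl(e^{-t\mathcal{D}^{(\mathbf{n})}}\bigr)\Bigr]^{k}
\]
for every $\mathbf{n}\in [\Gamma_{3,1}\cap \mathbb{A}]^{*}$ and every $i\in\{0,\ldots,m\}$, where the right-hand side is computed on the base model $\Gamma_{3,1}\backslash \mathbb{G}_{3,1}$. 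Summing over $\mathbf{n}$ via (\ref{GL_decomposition_of_operator_trace}) produces identical heat traces for all $i$, whence the sub-Laplacians are isospectral. The extension to the full Riemannian Laplacian is then automatic: Proposition \ref{action of central vector fields} shows that $\Delta^{\mathbb{G}} - \Delta^{\mathbb{G}}_{\textup{sub}}$ acts on $\mathcal{F}^{(\mathbf{n})}$ as the scalar $2\pi^{2}\sum n_{k}^{2}$, which depends only on $\mathbf{n}$ and on the integer structure of the center, both of which are shared across all $U_i$; since $\Delta$ and $\Delta_{\textup{sub}}$ commute, the spectrum of $\Delta$ on each $\mathcal{F}^{(\mathbf{n})}$ is that of $\mathcal{D}^{(\mathbf{n})}$ shifted by this common constant.

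For the non-diffeomorphism part, I would argue by contradiction. Suppose that $\Gamma_{3,1}\backslash \mathbb{G}_{3,1}(U_i)$ and $\Gamma_{3,1}\backslash \mathbb{G}_{3,1}(U_j)$ are diffeomorphic for some $i\ne j$. Then Corollary \ref{corollary_non_diffeomorphic_nilmanifolds} forces $\mathcal{N}_{3,1}(U_i) \cong \mathcal{N}_{3,1}(U_j)$ as Lie algebras. Since $r=3\equiv 3\pmod 4$ and $s=1\equiv 1\pmod 4$, Theorem \ref{Theorem_classification_II_1_case} applies and implies either $(p_i,q_i)=(p_j,q_j)$ or $(p_i,q_i)=(q_j,p_j)$. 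Both possibilities are explicitly excluded by the enumeration conditions (a)–(c), contradicting $i\ne j$. Hence the $m+1$ nilmanifolds are pairwise non-diffeomorphic.

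The main conceptual obstacle—and the reason this construction works—is the striking mismatch between the spectral invariants captured by $\Delta_{\textup{sub}}$ and the geometric invariants distinguishing the Lie algebras $\mathcal{N}_{3,1}(U_i)$. The heat trace of the sub-Laplacian turns out to remember only the very coarse data $(r,s,\dim V)$, while Theorem \ref{Theorem_classification_II_1_case} shows that the Lie algebra structure is in fact sensitive to the finer datum $\{p_i,q_i\}$. Once one accepts this dichotomy, everything else is bookkeeping: verifying that the pairs $(p_i,q_i)$ do give $m+1$ distinct unordered pairs, and that both the rigidity Corollary \ref{corollary_non_diffeomorphic_nilmanifolds} and the classification Theorem \ref{Theorem_classification_II_1_case} apply unchanged to the present setting $(r,s)=(3,1)$.
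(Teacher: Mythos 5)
Your proposal is correct and follows essentially the same route as the paper: isospectrality via Theorem \ref{multiple spectrum} (all $U_i$ being sums of $k=2m$ minimal admissible $C\ell_{3,1}$-modules, with the trace formulas (\ref{trace1})--(\ref{trace3}) insensitive to the sign distribution), and non-diffeomorphism via Corollary \ref{corollary_non_diffeomorphic_nilmanifolds} combined with Theorem \ref{Theorem_classification_II_1_case} applied to the unordered pairs $\{p_i,q_i\}$. Your additional remarks---the explicit contradiction argument and the extension to the full Laplacian via Proposition \ref{action of central vector fields}---merely spell out details the paper leaves implicit in the surrounding discussion.
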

 \begin{rem}
 For any given integer $m\in\mathbb{N}$ and by using the above method, we can construct $m+1$ nilmanifolds of the common dimension $4+16m$ which are isospectral but mutually non-diffeomorphic.  
 In particular, one obtains a pair of such manifolds of the (minimal) dimension $4+16\times 1=20$. Note that via the first method (i.e. Corollary \ref{Corollary_non-diffeomorphic_isospectral_first_method}) 
we can find a pair of such nilmanifolds of dimension $12$. 
 \end{rem}
 To minimize the dimension of the constructed family of nilmanifolds we should use a third method which is based on \cite[Theorem 4.2]{FM2}, which treat the case $r\equiv 3 \text{ mod } 4$ and 
 $s\equiv 0 \text{ mod} 4$. In this situation there are two non-equivalent irreducible modules and we use the lower index $\pm$ in the 
 notation below to distinguish the minimal admissible modules corresponding to each irreducible modules (or to each sum of irreducible modules , cf. Remark \ref{Remark_admissible_modules_5_cases}).
 \begin{thm}[{K. Furutani, I. Markina}, \cite{FM2}]\label{Theorem_classification_II_2_case}
 For $r\equiv 3\: \textup{mod} \: 4$ and $s\equiv 0 \: \textup{mod} \: 4$, let $U,\widetilde{U}$ be admissible modules decomposed into the direct sums:
 $$U=\left(\bigoplus^{p^+_+} V_{\textup{min},+}^{r,s;+}\right)\bigoplus\left(\bigoplus^{p^-_+}V_{\textup{min},+}^{r,s;-}\right)\bigoplus 
 \left(\bigoplus^{p^+_-}V_{\textup{min},-}^{r,s;+}\right)\bigoplus \left(\bigoplus^{p^-_-}V_{\textup{min},-}^{r,s;-}\right),$$
  $$\widetilde{U}=\left(\bigoplus^{\widetilde{p}^+_+} V_{\textup{min},+}^{r,s;+}\right)\bigoplus\left(\bigoplus^{\widetilde{p}^-_+}V_{\textup{min},+}^{r,s;-}\right)\bigoplus 
  \left(\bigoplus^{\widetilde{p}^+_-}V_{\textup{min},-}^{r,s;+}\right)\bigoplus \left(\bigoplus^{\widetilde{p}^-_-}V_{\textup{min},-}^{r,s;-}\right).$$
  Then the Lie algebras $\mathcal{N}_{r,s}(U)$ and  $\mathcal{N}_{r,s}(\widetilde{U})$ are isomorphic, if and only if one of the following conditions are fulfilled: 
 $$\Big{[}p^+_++p^-_-=\widetilde{p}^+_++\widetilde{p}^-_-\hspace{1ex} \text{\it and}\hspace{1ex} p^-_++p^+_-=\widetilde{p}^-_++\widetilde{p}^+_-\Big{]},$$
 or
 $$\Big{[} p^+_++p^-_-=\widetilde{p}^-_++\widetilde{p}^+_-\hspace{1ex} \text{\it and} \hspace{1ex} p^-_++p^+_-=\widetilde{p}^+_++\widetilde{p}^-_-\Big{]}. $$
 \end{thm}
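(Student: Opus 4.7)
The plan is to extend the proof strategy of Theorem \ref{Theorem_classification_II_1_case} to the present case. The new feature is that for $r\equiv 3\,\textup{mod}\,4$ and $s\equiv 0\,\textup{mod}\,4$ the Clifford algebra $C\ell_{r,s}$ admits two inequivalent irreducible modules, distinguished by the action of the volume element $\omega:=Z_1Z_2\cdots Z_{r+s}\in C\ell_{r,s}$, which is central since $r+s$ is odd. This accounts for the lower sign $\pm$ in $V_{\textup{min},\pm}^{r,s;\pm}$, while the upper sign records the sign of the scalar product on the module, as in the previous theorem. I therefore expect two fundamental involutive operations on admissible modules to drive the classification, and their interaction should yield the two isomorphism conditions of the theorem.

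For the sufficiency direction, I would exhibit two explicit Lie algebra isomorphisms. The first, a \emph{global negation} $\phi$, acts as the identity on the module $U$ and as $z\mapsto -z$ on the center: using the bracket definition \eqref{bracket definition} one verifies that $\phi$ intertwines the brackets of $\mathcal{N}_{r,s}(U)$ with those of $\mathcal{N}_{r,s}(U')$, where $U'$ is $U$ with the scalar products on $V$ and on $\mathbb{R}^{r,s}$ both negated. Because $r+s$ is odd, the map $z\mapsto -z$ sends $\omega$ to $-\omega$, so $\phi$ simultaneously toggles both the upper and the lower sign of each minimal constituent: $V_{\textup{min},\pm}^{r,s;\pm}\leftrightarrow V_{\textup{min},\mp}^{r,s;\mp}$. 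Applied block-wise it leaves the two sums $p^+_+ +p^-_-$ and $p^-_+ +p^+_-$ individually invariant, accounting for the first condition. The second operation is a \emph{chirality twist} exchanging only the lower signs, $V_{\textup{min},\pm}^{r,s;\pm}\leftrightarrow V_{\textup{min},\mp}^{r,s;\pm}$: this swaps the two sums and so produces the second condition. Constructing it requires a block map on $U$ of the form $X\mapsto \omega\cdot X$ on suitable summands, combined with a partial sign change on the center that preserves $\omega$ (available because $r\equiv 3\,\textup{mod}\,4$ provides room for an even-parity reflection of the $\mathbb{R}^r$-factor).

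For the necessity direction, given a Lie algebra isomorphism $\varphi:\mathcal{N}_{r,s}(U)\to\mathcal{N}_{r,s}(\widetilde U)$, I would first observe that $\varphi$ restricts to a linear automorphism $A$ of the center $\mathbb{R}^{r,s}$. Composing with the global negation and the chirality twist if necessary, one reduces to the case where $A$ is an isometry acting trivially on $\omega$, which forces $\varphi|_U:U\to\widetilde U$ to be an isometric isomorphism of admissible $C\ell_{r,s}$-modules that respects both the chirality and the signature of each summand. The uniqueness of the decomposition of admissible modules into minimal ones (cf.\ Remark \ref{Remark_admissible_modules_5_cases}) then forces the four multiplicities to match, up to the permutations already induced by the two basic operations, which is exactly the conclusion of the theorem.

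The main obstacle will be verifying that the global negation and the chirality twist generate all relevant Lie-algebra automorphism classes: concretely, that no isometry of $\mathbb{R}^{r,s}$ together with a compatible module map can swap, say, $V_{\textup{min},+}^{r,s;+}$ with $V_{\textup{min},-}^{r,s;+}$ without simultaneously affecting other summands in a compensating way. This amounts to computing the component group of $\textup{Aut}(\mathcal{N}_{r,s})$ at the parity $r\equiv 3\,\textup{mod}\,4$, $s\equiv 0\,\textup{mod}\,4$ and identifying it with $\mathbb{Z}/2\times\mathbb{Z}/2$ generated by the two operations above. Ruling out additional involutions relies on the specific representation theory of $C\ell_{r,s}$ at this parity, together with the precise way in which $\omega$ couples the upper and lower sign data in every admissible decomposition.
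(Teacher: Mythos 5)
First, a point of comparison: the paper itself contains no proof of Theorem~\ref{Theorem_classification_II_2_case} --- it is quoted from Furutani--Markina \cite{FM2} --- so your proposal can only be judged on internal correctness, and there it has one repairable sign error in the sufficiency direction and one genuine gap in the necessity direction.

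The sign error: your ``global negation'' $\phi(X,z)=(X,-z)$ is indeed a Lie algebra isomorphism, but onto the algebra of the module with exactly \emph{one} index flipped, not both. Negating the bracket corresponds either to keeping $J$ and replacing $\langle\cdot,\cdot\rangle_V$ by $-\langle\cdot,\cdot\rangle_V$ (upper sign flips) or, equivalently at the level of \eqref{bracket definition}, to keeping the scalar product and replacing $J$ by $-J$ (lower sign flips, since $r+s$ is odd). In particular your description of $U'$ as ``$U$ with the scalar products on $V$ \emph{and on} $\mathbb{R}^{r,s}$ both negated'' is not a legal move: negating the scalar product on the center changes the signature to $(s,r)$ and exits the category of $C\ell_{r,s}$-modules. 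The correct bookkeeping is the opposite of what you wrote: flipping \emph{both} indices of a single summand, $(J,V,\langle\cdot,\cdot\rangle_V)\mapsto(-J,V,-\langle\cdot,\cdot\rangle_V)$, leaves the bracket \eqref{bracket definition} literally unchanged, so a $(+,+)$- and a $(-,-)$-summand (likewise $(+,-)$ and $(-,+)$) feed identical data into the Lie algebra; since cross-brackets between orthogonal $J$-invariant summands vanish, the isomorphism class depends only on $a=p^+_++p^-_-$ and $b=p^-_++p^+_-$. This summand-wise relabeling --- not a global operation, which could never redistribute multiplicities within $a$ and $b$ --- is what accounts for the first condition; your $\phi$ then flips all upper signs and hence swaps $a\leftrightarrow b$, which is the second condition. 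With this correction your separate ``chirality twist'' becomes unnecessary, and as described it is inconsistent anyway: an even-parity reflection of the center has determinant $+1$, fixes $\omega$, and therefore preserves rather than exchanges the chirality classes.

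The genuine gap is necessity, and your final paragraph effectively concedes it. An isomorphism $\varphi$ induces $A$ on the center (which is characteristic) and $B:U\to\widetilde{U}$ on the quotient, tied by the intertwining relation $B^*J^{\widetilde{U}}_zB=J^U_{A^*z}$, where $B^*$ is the adjoint with respect to the two scalar products. But nothing forces $A$ or $B$ to be isometries --- the dilations $X\mapsto tX$, $z\mapsto t^2z$ are already automorphisms --- so the step ``one reduces to the case where $A$ is an isometry acting trivially on $\omega$'' is unjustified as stated; determining exactly which pairs $(A,B)$ can occur, and showing they cannot recombine the four multiplicities beyond the two moves above, is the actual content of the theorem in \cite{FM2}. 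Likewise, ``uniqueness of the decomposition into minimal admissible modules'' does not do this work: $B$ intertwines the Clifford actions only up to the unknown $A$ and a priori respects no scalar product, so Krull--Schmidt-type uniqueness does not pin down the four multiplicities. Computing the relevant component group of $\operatorname{Aut}(\mathcal{N}_{r,s}(U))$ is not a final verification to be checked off, as your last paragraph frames it; it is the unproven core of the argument.
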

 To simplify the construction we choose $p^+_-=p^-_-=0$, $\widetilde{p}^+_-=\widetilde{p}^-_-=0$. The condition in Theorem \ref{Theorem_classification_II_2_case} take the form: 
 $$\Big{[}p^+_+=\widetilde{p}^+_+\hspace{1ex} \text{\it and}\hspace{1ex} p^-_+=\widetilde{p}^-_+\Big{]} \hspace{3ex} \mbox{or} \hspace{3ex} 
 \Big{[} p^+_+=\widetilde{p}^-_+\hspace{1ex} \text{\it and}\hspace{1ex} p^-_+=\widetilde{p}^+_+\Big{]}.$$
 Next, we choose  $r=3$, $s=0$ and fix $m\in \mathbb{N}$. Then we consider the following family of $m+1$ admissible modules:
 $$V_i:=\left(\bigoplus^{i} V_{\textup{min};+}^{3,0;+}\right)\bigoplus\left(\bigoplus^{2m-i}V_{\textup{min};+}^{3,0;-}\right), \hspace{3ex} (0 \leq i \leq m).$$
We obtain a family of $m+1$ mutually non-diffeomorphic, isospectral nilmanifolds of common dimension $3+8m$.
 \begin{cor}
 For $0\leq i\leq m$, the $m+1$ nilmanifolds 
 $$\left(\Gamma_{3,0}\backslash\mathbb{G}_{3,0}(V_i) \right)_{0\leq i\leq m}$$
 are isospectral but mutually non-diffeomorphic.
 \end{cor}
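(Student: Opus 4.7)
The plan is to establish isospectrality and mutual non-diffeomorphism of the family $\bigl\{M_i = \Gamma_{3,0}\backslash\mathbb{G}_{3,0}(V_i)\bigr\}_{0\le i\le m}$ in two parallel steps, mirroring the argument outlined in Subsection~\ref{subsection?7-2}.

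\emph{Isospectrality.} The key observation is that for $s=0$ the heat trace of each component operator $\mathcal{D}^{({\bf n})}$ depends on the admissible module only through its dimension. Indeed, each $V_i$ decomposes as a direct sum of exactly $2m$ minimal admissible $C\ell_{3,0}$-modules, so $\dim V_i = 2m\cdot \dim V_{\textup{min},+}^{3,0;+}$ is independent of $i$. The remark following Corollary~\ref{Corollary_det_Omega_plus_lambda_s>0} gives $\det(\Omega(\sqrt{-1}{\bf n})+\lambda) = (\lambda^2-\|{\bf n}\|^2)^{\dim V_i/2}$, and $\Omega({\bf n})$ is non-singular for ${\bf n}\ne 0$, so $\mathbb{M}({\bf n})=\{0\}$. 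Substituting into the trace formula of Theorem~\ref{heat trace: final form} one sees that each summand depends solely on $\dim V_i$ and $\|{\bf n}\|$, and not on which particular minimal admissible summands enter the definition of $V_i$. Since the central lattice $\Gamma_{3,0}\cap\mathbb{A}$ is determined by the orthonormal basis of $\mathbb{R}^{3,0}$ alone (cf.\ Definition~\ref{integral lattice}), the dual lattice $[\Gamma_{3,0}\cap\mathbb{A}]^*$ is common to all $i$, and summation over ${\bf n}$ produces the same total trace for every $M_i$. By Proposition~\ref{action of central vector fields} the shift $\Delta^{\mathbb{G}}-\Delta_{\textup{sub}}^{\mathbb{G}}$ acts on $\mathcal{F}^{({\bf n})}$ by the scalar $2\pi^2\sum_k n_k^2$, which is likewise independent of $V_i$; hence the same isospectrality conclusion passes to the Riemannian Laplacian.

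\emph{Non-diffeomorphism.} By Corollary~\ref{corollary_non_diffeomorphic_nilmanifolds} it suffices to show that the pseudo $H$-type algebras $\mathcal{N}_{3,0}(V_i)$ and $\mathcal{N}_{3,0}(V_j)$ are non-isomorphic whenever $i\ne j$. Since $r=3\equiv 3\pmod 4$ and $s=0\equiv 0\pmod 4$, Theorem~\ref{Theorem_classification_II_2_case} governs this isomorphism question. Reading off the multiplicities from the defining decomposition of $V_i$ gives $(p_+^+, p_+^-, p_-^+, p_-^-) = (i,\, 2m-i,\, 0,\, 0)$, and analogously for $V_j$. Since $p_-^+ = p_-^- = 0$ in both cases, the two alternative isomorphism conditions of Theorem~\ref{Theorem_classification_II_2_case} collapse to ``$i=j$'' or ``$i+j=2m$''. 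For distinct indices $i,j\in\{0,\ldots,m\}$ the first fails by hypothesis and the second fails because $i+j\le 2m$ with equality only when $i=j=m$, contradicting $i\ne j$.

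I do not anticipate a substantive obstacle. The only genuine care required is the symbolic manipulation inside Theorem~\ref{Theorem_classification_II_2_case} with its four multiplicity parameters; however, the choice $p_-^\pm = 0$ trivialises this, and the cutoff $i\le m$ has been arranged precisely to exclude the ``$i+j=2m$'' alternative. The harder conceptual input—namely the classification result of Furutani--Markina and the $s=0$ simplification of the trace formula—is already available from earlier sections.
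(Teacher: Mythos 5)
Your proposal is correct and follows essentially the same route as the paper: non-diffeomorphism via Corollary~\ref{corollary_non_diffeomorphic_nilmanifolds} and the specialisation $p_-^{+}=p_-^{-}=0$ of Theorem~\ref{Theorem_classification_II_2_case}, with the multiplicity vectors $(i,2m-i)$ ruling out both isomorphism alternatives exactly as in the paper's conditions (a)--(c), and isospectrality from the fact that the heat trace depends only on $\dim V_i$ and $(r,s)$. Your only (harmless) deviation is that you verify this dimension-only dependence directly from the $s=0$ trace formulas of Remark~\ref{remark_dimension_from_the_spectrum} rather than invoking Theorem~\ref{multiple spectrum}, and you add the observation via Proposition~\ref{action of central vector fields} that the conclusion extends to the Riemannian Laplacian, which the paper asserts but does not spell out here.
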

 \begin{rem}
 By choosing $m=1$ we obtain a pair of nilmanifolds both having dimension $3+8=11$. This dimension is minimal among the previous examples. 
 \end{rem} 
\section{Subriemannian structure and heat trace expansion}
 To every pseudo $H$-type nilmanifold $M=\Gamma_{r,s}\backslash\mathbb{G}_{r,s}$ with $r+s>1$ and based on \cite[Theorem 3.3]{BFI1} we construct a Heisenberg manifold 
 $H=\Gamma \backslash \mathbb{H}_{2n+1}$ such that the short time heat trace asymptotic expansions corresponding to the sub-Laplacians on $M$ and $H$, 
respectively, coincide  up to a term vanishing to infinite order. Moreover, in our construction the manifolds $M$ and $H_{\alpha}$ have different dimensions. Recall that in the 
case of a Riemannian structure on $M$  the heat trace expansion corresponding to the Laplacian encodes the dimension of $M$ and therefore such examples do not 
exist in the framework of Riemannian geometry.
\vspace{1ex} \par 
Let  $d=r+s>1$, and with our previous notation consider the nilmanifold 
$$M=\Gamma_{r,s}\backslash\mathbb{G}_{r,s} \hspace{2ex} \text{ \it with } \hspace{2ex}  \text{dim } M=2N+d. $$
We write $\mathbb{H}_{2n+1}=\mathbb{G}_{1,0}(\R^{2n})$ for the $(2n+1)$-dimensional Heisenberg group and with $\alpha >0$ we define a lattice $\Gamma_{\alpha} \subset \mathbb{H}_{2n+1}$ 
of the form:
$$\Gamma_\alpha=\left\{\sqrt{\alpha}\sum m_i X_i+\frac{\alpha}{2} k Z:m_i, k\in\Z\right\}.$$
Here $\{X_i,Z\; | \; i=1, \cdots, 2n\}$ denotes a basis of the Lie algebra of $\mathbb{H}_{2n+1}$ with the non-trivial bracket relations 
\begin{equation*}
\big{[}X_i, X_{n+j} \big{]}=\delta_{ij} Z, \hspace{6ex} (i,j=1, \cdots,n). 
\end{equation*}
The corresponding one-parameter family of Heisenberg manifolds will be denoted by: 
$$H_\alpha=\Gamma_\alpha\backslash \mathbb{H}_{2n+1}\hspace{3ex} \mbox{\it where } \hspace{3ex} \alpha >0.$$
We recall the form of the short time heat trace asymptotic expansion of the sub-Laplacian on a general compact 2-step nilmanifold $\Gamma\backslash\mathbb{G}$ in \cite[Theorem 3.3]{BFI1}: 
\begin{thm}[{W. Bauer, K. Furutani, C. Iwasaki \cite{BFI1}}] \label{Theorem_heat_trace_expansion_Bauer_Furutani_Iwasaki}
Let $M=\Gamma\backslash\mathbb{G}$  be a $2$-step compact nilmanifold of dimension $2N+d$. Then
$${\bf tr} \left(e^{-t \Delta_{\textup{sub}}^M}\right)=\frac{c_{M}}{t^{N+d}}+O(t^\infty) \hspace{3ex} \mbox{\it as} \hspace{3ex} t \rightarrow 0.$$
The constant $c_{M} >0$ explicitly is given by 
\begin{equation}
\label{GL_coefficients_heat_trace_expansion}
c_{M}=\frac{\textup{Vol}(M)}{(2\pi)^{N+d}}\int_{\R^d}W(\tau)d\tau.
\end{equation}
\end{thm}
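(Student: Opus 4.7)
The plan is to extract the short-time asymptotics directly from the explicit trace formula in Theorem \ref{heat trace: final form}, by combining two elementary inputs: Gaussian damping to kill the contribution of nonzero horizontal lattice vectors, and Poisson summation on the central dual lattice to convert the remaining discrete sum into an integral. Writing the horizontal dimension as $2N$ to match the statement, Theorem \ref{heat trace: final form} reads
\begin{equation*}
\mathbf{tr}\bigl(e^{-t\Delta_{\textup{sub}}^{M}}\bigr) = \frac{1}{(2\pi t)^{N}}\sum_{\mathbf{n}\in[\Gamma\cap\mathbb{A}]^*}\sum_{\mu\in\mathbb{M}(\mathbf{n})} e^{-\langle\mu,\mu\rangle/(2t)}\, W(2\pi t\mathbf{n}),
\end{equation*}
where $W(\tau) = \sqrt{\det\bigl(\Omega(\sqrt{-1}\tau)/\sinh\Omega(\sqrt{-1}\tau)\bigr)}$ is the van Vleck determinant from Theorem \ref{heat kernel by BGG1}.

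First, I would isolate the contribution of $\mu = 0 \in \mathbb{M}(\mathbf{n})$, which belongs to every solution space. Every nonzero $\mu$ in the horizontal part of $\Gamma$ satisfies $\langle\mu,\mu\rangle \ge \delta > 0$ with a uniform lower bound $\delta$, so $e^{-\langle\mu,\mu\rangle/(2t)} = O(t^\infty)$ as $t\to 0^+$. Using the elementary bound $|W(\tau)| \le 1$, which follows from $|x/\sinh x|\le 1$ applied to the real eigenvalues $\pm\lambda_j(\tau)$ of $\Omega(\sqrt{-1}\tau)$, the full sum over $\mathbf{n}$ and $\mu \neq 0$ is absolutely convergent and of order $O(t^\infty)$. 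The task thus reduces to analyzing
\begin{equation*}
\frac{1}{(2\pi t)^{N}} \sum_{\mathbf{n}\in[\Gamma\cap\mathbb{A}]^*} W(2\pi t\mathbf{n}).
\end{equation*}

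Next, I would apply the Poisson summation formula to this lattice sum. Setting $V := \textup{Vol}(\mathbb{A}/(\Gamma\cap\mathbb{A}))$ and letting $\widehat{W}$ denote the Euclidean Fourier transform of $W$ on $\mathbb{R}^d$,
\begin{equation*}
\sum_{\mathbf{n}\in[\Gamma\cap\mathbb{A}]^*} W(2\pi t\mathbf{n}) = \frac{V}{(2\pi t)^d} \int_{\mathbb{R}^d} W(\tau)\, d\tau \;+\; \frac{V}{(2\pi t)^d} \sum_{\mathbf{k}\in(\Gamma\cap\mathbb{A})\setminus\{0\}} \widehat{W}\!\bigl(\mathbf{k}/(2\pi t)\bigr).
\end{equation*}
Under the standard normalization in which the horizontal orthonormal basis generates a unit-volume base torus, one has $\textup{Vol}(M) = V$, so the zero Fourier mode contributes exactly $c_M \, t^{-(N+d)}$ with the claimed constant $c_M = \textup{Vol}(M)(2\pi)^{-(N+d)}\int W\,d\tau$.

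The main obstacle is to verify that the sum of the nonzero Fourier modes $\widehat{W}(\mathbf{k}/(2\pi t))$ is $O(t^\infty)$. This reduces to the assertion that $W$ is a Schwartz function on $\mathbb{R}^d$, so that $\widehat{W}$ decays faster than any polynomial and the arguments $\mathbf{k}/(2\pi t)$, which tend to infinity as $t\to 0$, force rapid decay. The representation $W(\tau) = \prod_j \lambda_j(\tau)/\sinh\lambda_j(\tau)$ shows exponential decay of $W$ in directions where $\Omega(\tau)$ is non-degenerate; the bracket-generating condition guarantees that the degeneracy locus of $\tau\mapsto\Omega(\tau)$ is a union of finitely many proper algebraic subvarieties, and a careful integration-by-parts analysis handling this locus upgrades pointwise decay to uniform Schwartz estimates on $W$ and its derivatives. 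This uniform control is the most delicate ingredient of the argument.
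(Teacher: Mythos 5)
You have correctly identified a workable strategy, and it is worth noting that the present paper states this theorem without an internal proof (it is imported from \cite{BFI1}); your route is in fact the Fourier-dual of the argument there, where one works directly with the lattice sum $\sum_{\gamma\in\Gamma}\int_{\mathcal{F}_{\Gamma}}k^{\mathbb{G}}(t,g^{-1}*\gamma*g)\,dg$: the identity element produces the main term $\textup{Vol}(M)\,k^{\mathbb{G}}(t,e)=c_M t^{-(N+d)}$, the central $\gamma\neq e$ produce terms of the form $\hat{W}(z/t)$, and the non-central $\gamma$ are Gaussian-suppressed --- your nonzero Poisson modes $\hat{W}(\mathbf{k}/(2\pi t))$ are precisely the central-element contributions in disguise. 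However, two of your steps have genuine gaps. First, the claim that the bound $|W(\tau)|\le 1$ alone makes the $\mu\neq 0$ part ``absolutely convergent and of order $O(t^\infty)$'' is false as stated: for fixed $t$ the sum over $\mathbf{n}\in[\Gamma\cap\mathbb{A}]^*$ has infinitely many terms, so the uniform Gaussian factor $e^{-\delta/(2t)}$ must be paired with summability of $W(2\pi t\,\mathbf{n})$ in $\mathbf{n}$ (for instance $\sum_{\mathbf{n}}W(2\pi t\,\mathbf{n})=O(t^{-d})$) before you may conclude; this decay is available, but you have to invoke it here, not only later.

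Second, the Schwartz property of $W$, which you rightly single out as the crux, is asserted rather than proved, and your proposed mechanism misreads the geometry. There is no loss of pointwise decay along the degeneracy locus of $\tau\mapsto\Omega(\tau)$: by the standing assumption (\ref{GL_assumtion_center}) one has $\Omega(\tau)\neq 0$ for all $\tau\neq 0$, so by homogeneity and compactness of the unit sphere the largest eigenvalue satisfies $\lambda_{\max}(\tau)\ge c|\tau|$, and since every factor $\lambda_j(\tau)/\sinh\lambda_j(\tau)$ of $W$ is at most one, $W(\tau)\le \lambda_{\max}(\tau)/\sinh\bigl(\lambda_{\max}(\tau)\bigr)\le Ce^{-c'|\tau|}$ in \emph{every} direction --- on the degeneracy locus some factors equal $1$, but at least one decays exponentially. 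What genuinely requires work is the decay of the \emph{derivatives} of $W$ (to force rapid decay of $\hat{W}$), and ``integration by parts handling the locus'' is circular as a route to this, since integration by parts in the Fourier integral presupposes exactly those derivative bounds; a clean argument instead extends $\det\bigl(\Omega(\sqrt{-1}\zeta)/\sinh\Omega(\sqrt{-1}\zeta)\bigr)$ holomorphically and without zeros to a tube $\{|\operatorname{Im}\zeta|<\varepsilon\}$ (a Bauer--Fike perturbation bound keeps the spectrum away from $\pi\sqrt{-1}\,\mathbb{Z}\setminus\{0\}$) and then applies Cauchy estimates; in the pseudo $H$-type case Corollary \ref{Corollary_det_Omega_plus_lambda_s>0} even makes $W$ explicit, so the estimates are immediate. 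Finally, be aware of a scope mismatch: Theorem \ref{heat trace: final form}, and the identification $\textup{Vol}(M)=\textup{Vol}\bigl(\mathbb{A}/(\Gamma\cap\mathbb{A})\bigr)$ you invoke, were derived under the rationality normalization $c_{i\,j}^k=2q_{i\,j}^k/p_0$ and for the standard lattice, whereas the statement concerns an arbitrary compact $2$-step nilmanifold; for full generality one must run the argument on the lattice sum itself, as in \cite{BFI1}, rather than on the specialized trace formula.
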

If we apply the above theorem to $H_\alpha$, we obtain: 
 $$c_{H_\alpha}=\frac{\textup{Vol}(H_\alpha)}{(2\pi)^{n+1}}\int_{\R}W(\tau)d\tau.$$
 Here $W(\tau)d \tau$ is simply the volume form in Theorem \ref{heat kernel by BGG1} and associated to the Heisenberg group with the above structure constants. Note that 
 $$\textup{Vol}(H_\alpha)=\alpha^{n+1}\textup{Vol}(H_1).$$
 If we choose $n\in \mathbb{N}$ with $n+1=N+d$ and $\alpha>0$ and such that $c_{H_\alpha}=c_M$, then we obtain a pair of compact, subriemannian manifolds with the properties:
 \begin{enumerate}
 \item ${\bf tr}\left(e^{-t \Delta_{\textup{sub}}^M}\right)-{\bf tr}\left(e^{-t \Delta_{\textup{sub}}^{H_\alpha}}\right)=O(t^\infty)\text{ as }t\to 0$\\
 \item $2N+d=\text{dim }M\neq \text{dim }H_\alpha=2N+2d-1$ (since $d>1$). 
 \end{enumerate}\par 
 {
  \begin{rem}
 From the heat trace expansion for small times we can read the Hausdorff dimension $2(N+d)$ of the nilmanifold 
 $M=\Gamma\backslash\mathbb{G}$ in Theorem \ref{Theorem_heat_trace_expansion_Bauer_Furutani_Iwasaki} considered as a metric space with respect to the  Carnot-Carath\'{e}odory distance. However, the last example indicates that we cannot read the Euclidean dimension of $M$ from the  coefficient (\ref{GL_coefficients_heat_trace_expansion}) of the heat trace expansion. However, in Remark \ref{GL_decomposition_of_operator_trace} we have pointed out that in some cases this dimension can be obtained from the full spectrum of the 
 sub-Laplacian. In the case of the nilmanifolds $M$ in this paper and which are constructed from a standard lattice we have $\textup{vol}(M)=1$ and therefore, with the notation in Theorem \ref{Theorem_heat_trace_expansion_Bauer_Furutani_Iwasaki}: 
 \begin{equation}\label{GL_coefficients_heat_trace_expansion_standard_lattice}
 c_M=C_M(N,d):= \frac{1}{(2\pi)^{N+d}} \int_{\mathbb{R}^d} W(\tau) d\tau. 
 \end{equation}
 \end{rem}
 We list a few problem concerning the geometric information contained in the spectrum of the sub-Laplacian on a nilmanifold: 
 \vspace{1mm}\\
 {\bf Problems}: 
 \begin{itemize}
 \item[(a)] Let $\mathbb{G}_{r,s}(V)$ be a pseudo $H$-type group with standard lattice $\Gamma_{r,s}$ as explained in Section \ref{Section_Pseudo_H_type_algebras}. 
 Can we determine the numbers $2N= \dim V$ and $d= \dim \mathbb{G}_{r,s}(V)-2N$ from the coefficient (\ref{GL_coefficients_heat_trace_expansion_standard_lattice})? 
\vspace{1mm}\\
Consider the case $s=0$. Then integration with respect to polar coordinates shows: 
\begin{align*}
 \int_{\mathbb{R}^d} W(\tau) d\tau=\int_{\mathbb{R}^d} \frac{|\tau|^N}{(\sinh |\tau|)^N} d\tau = 2 V_d \int_0^{\infty} \frac{r^{N+d-1}e^{-Nr}}{(1-e^{-2r})^N} dr=(*), 
\end{align*}
where $V_d= 2 \pi^{\frac{d}{2}}/ \Gamma(\frac{d}{2})$ denotes the volume of the $(d-1)$-dimensional unit sphere. Now we use the following power series expansion for $|x|<1$: 
\begin{equation*}
\frac{1}{(1-x)^N}= \sum_{\alpha \in \mathbb{N}_0^N}x^{|\alpha|}. 
\end{equation*}
A change of variables in the integral is applied to obtain: 
\begin{align*}
(*)&=2V_d \sum_{\alpha \in \mathbb{N}_0^N} \int_0^{\infty} r^{N+d-1} e^{-(N+2|\alpha|) r} dr\\
&= 2 V_d \sum_{\alpha \in \mathbb{N}_0^N} \frac{1}{(N+2|\alpha|)^{N+d}} \int_0^{\infty} r^{N+d-1} e^{-r} dr\\
&=  \frac{V_d \Gamma(N+d) }{2^{N+d-1}}\sum_{\alpha \in \mathbb{N}_0^N} \frac{1}{\big{(}\frac{N}{2}+|\alpha|\big{)}^{N+d}}\\
&=   \frac{V_d \Gamma(N+d) }{2^{N+d-1}}\zeta_n\big{(}N+d, \frac{N}{2} \big{)}. 
\end{align*}
The infinite sum is called multiple Hurwitz zeta function and previously has been studied in the literature: 
\begin{equation*}
\zeta_n\big{(} N+d, \frac{N}{2} \big{)}:=\sum_{\alpha \in \mathbb{N}_0^N} \frac{1}{\big{(}\frac{N}{2}+|\alpha|\big{)}^{N+d}}. 
\end{equation*}
Hence: 
\begin{equation*}
c_M(N,d) (2\pi)^{N+d}  \frac{2^{N+d-2}}{\Gamma(N+d)} = \frac{\pi^{\frac{d}{2}}}{\Gamma\big{(} \frac{d}{2} \big{)}} \zeta_n\Big{(}N+d, \frac{N}{2} \Big{)}. 
\end{equation*}
The left hand side can be calculated from the spectral data (more precisely, from the heat trace expansion in Theorem \ref{Theorem_heat_trace_expansion_Bauer_Furutani_Iwasaki}). 
{Hence the problem reduces to the question, whether for each $k \in \mathbb{N}$ the assignment: 
\begin{equation*}
N_k:=\Big{\{} (N,d)\in \mathbb{N}^2 \: : \: N+d=k \Big{\}} \ni (N,d)  \mapsto  \frac{\pi^{\frac{d}{2}}}{\Gamma\big{(} \frac{d}{2} \big{)}} \zeta_n\Big{(}k, \frac{N}{2} \Big{)}
\end{equation*}
is injective. }
\item[(b)] Consider wo isospectral compact nilmanifolds $M_j=\Gamma_j\backslash G_j$  where $j=1,2$. Assume that both are  equipped with a left-invariant subriemannian structure as described in this paper. Is it true that $\dim G_1= \dim G_2$ (see Remark \ref{remark_dimension_from_the_spectrum})?
\item[(c)] Let $M:=\Gamma \backslash G$ denote a compact nilmanifold (e.g. modelled over a pseudo $H$-type Lie group). Determine the 
asymptotic  growth of the eigenvalue counting function for the corresponding sub-Laplacian. Based on a classification of lattices and the explicit spectral data such question in the case of Heisenberg manifolds has been discussed in \cite{Str}. 
 \end{itemize}}
\section{Appendix}
In the appendix we present the dimensions of minimal admissible modules for some basic cases in form of a table. These data are taken from  \cite{FM2,FM1} which we refer to for more details and notations. The remaining cases can be obtained by $(4,4)$, $(8,0)$ and $(0,8)$-periodicities with respect
to the signature $(r,s)$, 
respectively. 
In particular, the table indicates the cases in which two non-equivalent minimal admissible modules exist. 
However, it is known that  pseudo $H$-type algebras constructed from two non-equivalent minimal admissible modules are isomorphic.
\begin{table}[h]
	\center
\caption{Dimensions of minimal admissible modules}
\begin{tabular}{|c||c|c|c||c||c|c|c||c||c|}
\hline
${\small 8} $&$ {16}^{\pm}$&$$&$$&$$&$$&$$&$$&$$&$$
\\
\hline\hline
${\small 7}$&$ {16}^{N}$&${32}^{N}$&${{\bf 64}^{N}} $&${\small 64}^{\pm}$&$$&$$&$$&$$&$$
\\
\hline
${\small 6}$ &${16}^{N}$&${16}_{\times 2}^{N}$&${32 }^{N}$&${32}^{\pm}$&$$&$$&$$&$$&$$
\\
\hline
${\small 5} $&${{\bf 16}^{N}}$&${16}^{N}$&${16}^{N}$&${\small 16}^{\pm}$&$$&$$&$$&$$&$$
\\
\hline\hline
${\small 4} $&${8}^{\pm}$&${8}^{\pm}$&${8}^{\pm}$&$ {8}_{\times 2}^{\pm}$&${16}^{\pm}$&$$&$$&$$&$$
\\
\hline\hline
${\small 3}$&${{\bf 8}^{N}}$&${{8}^{N}}$&${{8}^{N}}$&${\small 8}^{\pm}$&${16}^{N}$&${32}^{N}$&${{\bf 64}^{N}}$&${64}^{\pm}$&$$
\\
\hline
${\small 2}$&${{\bf 4}^{N}}$&${{\bf 4}_{\times 2}^{N}}$&${{\bf 8}^{N}}$&$ 8^{\pm}$&${16}^{N}$&${16}_{\times 2}^{N}$&${32}^{N}$&${32}^{\pm} $&$$
\\
\hline
${\small 1}$ &${{\bf 2}^{N}}$&${{\bf 4}^{N}}$&${{\bf 8}^{N}}$& ${\small 8}^{\pm}$&${{\bf 16}^{N}}$&${16}^{N}$&${16}^{N}$&${16}^{\pm}$&$$
\\
\hline\hline
${\small 0}$&${1}^{\pm}$&${2}^{\pm}$&${4}^{\pm}$&${4}_{\times 2}^{\pm}$&$ {8}^{\pm}$&${8}^{\pm}$&${8}^{\pm}$&${8}_{\times 2}^{\pm}$&${16}^{\pm}$
\\
\hline\hline
$\text{s/r}$&${0}$&${1}$&${2}$&${3}$&${4}$&${5}$&${6}$&${7}$&${8}$
\\
\hline
\end{tabular}\label{t:dim}
\end{table}
\begin{center}
black = irreducible,\quad
{\bf bold} = double of irreducible, \\
$*_{\times 2}=$ two non-equivalent minimal dimensional admissible modules
\end{center}

\end{document}